\setlist[enumerate,1]{leftmargin=1cm}
\theoremstyle{plain}
\newtheorem{theorem}{Theorem}
\newtheorem{proposition}{Proposition}
\newtheorem{lemma}[proposition]{Lemma}
\theoremstyle{definition}
\newtheorem{definition}{Definition}
\newtheorem{example}{Example}
\theoremstyle{remark}
\renewcommand{\to}{\rightarrow}
\newcommand{\BN}{\mathbb{N}}
\newcommand{\BR}{\mathbb{R}}
\newcommand{\BZ}{\mathbb{Z}}
\newcommand{\cA}{\mathcal{A}}
\newcommand{\cH}{\mathcal{H}}
\newcommand{\cI}{\mathcal{I}}
\newcommand{\cS}{\mathcal{S}}
\newcommand{\cT}{\mathcal{T}}
\def\cf{\mathbf{1}}
\def\Be{\mathbf{e}}
\newcommand{\T}{\mathcal{T}}
\def\scT{\mathscr{T}}
\newcommand{\scS}{\mathscr{S}}
\renewcommand{\iff}{\Leftrightarrow}
\newcommand{\Span}{\textsc{span}}
\def\tail{\textnormal{tail}}
\def\cl{\textsc{cl}}
\newcommand{\Restrict}[2]{#1\big|_{#2}}
\newcommand{\restrict}[2]{#1|_{#2}}
\newcommand{\fringe}[2]{F_{#2}(#1)}
\newcommand{\bush}[2]{B_{#2}(#1)}
\newcommand{\from}{\leftarrow}
\title{Mass-structure of weighted real trees}
\author{Noah Forman}
\thanks{Research supported in part by EPSRC grant EP/K029797/1 and NSF grant DMS-1444084}
\keywords{real tree, continuum random tree, exchangeability, hierarchy, interval partition}
\subjclass[2010]{60B05, 60G09, 60C05}
\date{\today}
\begin{document}
\begin{abstract}
 Rooted, weighted continuum random trees are used to describe limits of sequences of random discrete trees. Formally, they are random quadruples $(\T,d,r,p)$, where $(\T,d)$ is a tree-like metric space, $r\in\T$ is a distinguished \emph{root}, and $p$ is a probability measure on this space. The underlying branching structure is carried implicitly in the metric $d$. We explore various ways of describing the interaction between branching structure and mass in $(\T,d,r,p)$ in a way that depends on $d$ only by way of this branching structure. We introduce a notion of mass-structure equivalence and show that two rooted, weighted $\BR$-trees are equivalent in this sense if and only if the discrete hierarchies derived by i.i.d.\ sampling from their weights, in a manner analogous to Kingman's paintbox, have the same distribution. We introduce a family of trees, called ``interval partition trees'' that serve as representatives of mass-structure equivalence classes, and which naturally represent the laws of the aforementioned hierarchies.
\end{abstract}
\maketitle

\section{Introduction}\label{sect-intro}

This paper explores three ideas that we find to be closely related: a notion of ``mass-structural equivalence'' between rooted, weighted real trees; a family of such trees in which the metric is, in a sense, specified by the weight and underlying branching structure; and continuum random tree representations of exchangeable random hierarchies on $\BN$.

\begin{definition} \label{def:real_tree}
 A \emph{real tree} (\emph{$\BR$-tree}) is a complete, separable, bounded metric space $(\T,d)$ with the property that: (i) for each $x,y\in\T$, there is a unique non-self-intersecting path in $\T$ from $x$ to $y$, called a \emph{segment} $[[x,y]]_{\T}$, and (ii) each segment is isometric to a closed real interval. Some authors require that $\BR$-trees be compact, but we will not.
 
  %If two segments of $\T$ intersect in a single point, and this point is an endpoint of both, then the union of these segments is again a segment.
  %\item if a segment contains distinct points $u$, $v$ then it contains $[[u,v]]_{\T}$; and
  %\item if the intersection of two segments contains at least two distinct points, then this intersection is a segment.
 
 A \emph{rooted, weighted $\BR$-tree} is a quadruple $(\T,d,r,p)$, where $(\T,d)$ is a $\BR$-tree, $r\in \T$ is a distinguished vertex called the \emph{root}, and $p$ is a probability distribution on $\T$ with respect to the Borel $\sigma$-algebra generated by $d$.
 
 We call two rooted, weighted $\BR$-trees \emph{isomorphic} if there exists a root- and weight-preserving isometry between them.
\end{definition}

$\BR$-trees have long been studied by topologists; see \cite{EvansStFleur,SempleSteel} for references. Random $\BR$-trees, called  \emph{continuum random trees} (\emph{CRTs}), were first studied by Aldous \cite{MR1085326,MR1207226}; also see \cite{EvansStFleur,MR2203728}. %CRTs arise as limits, in various senses, of random combinatorial trees with edge lengths. They are related to various phylogenetic and other biological models, including phylogenetic trees \cite{MR1838600,SempleSteel}, fragmentation and coagulation \cite{BertoinFragCoag}, and continuous-state branching processes. The also relate to random surfaces \cite{LeGallMier12}, and a variety of other stochastic processes, via a program of research currently being carried out by Miller and Sheffield \cite{MillShef2,MillShef3,MillShef4,MillShef1}.
In particular, Aldous introduced the Brownian CRT, which arises as a scaling limit of various families of random discrete trees, including critical Galton-Watson trees conditioned on total progeny. The Brownian CRT is a random fractal in the sense that, if we decompose it around a suitably chosen random branch point, then the components are each distributed as scaled copies of a Brownian CRT and are conditionally independent given their sizes. Since Aldous's work, other authors have introduced similarly complex CRTs, such as the Stable CRTs \cite{DuquLeGall05,DuquLeGall02}.

We think of these CRTs as having complex underlying ``branching structures.'' Formally, the branching structure in a $\BR$-tree $(\T,d)$ is specified by the metric $d$. But for some applications, it may be of interest to describe this structure in a way that does not depend on quantifying distances. In this paper, we consider the interaction between branching structure and mass in rooted, weighted $\BR$-trees. In particular, we look at various representations of this interaction that do not depend on the metric, except by way of the underlying branching structure.

%\begin{definition} \label{def:bp_leaf}
 For a rooted $\BR$-tree $(\T,d,r)$, a point $x\in \T$ is a \emph{branch point} if there exist three non-trivial segments with endpoint $x$ whose pairwise intersections all equal $\{x\}$. A point $x$ is a \emph{leaf} if it is an endpoint of every segment to which it belongs. The complement of the set of leaves is the \emph{skeleton} of the tree. The \emph{fringe subtree} of $(\T,d,r)$ rooted at $x$ is
 \begin{equation}
  \fringe{x}{\T}:= \{ y \in \T\colon x \in [[r,y]]_{\cT}\}.\label{fringe-subtree-defn}
 \end{equation}
%\end{definition}

\begin{definition}\label{def:special_pts}
 Consider a rooted, weighted $\BR$-tree $(\T,d,r,p)$. %The set of points $\{x\in\T\colon p(\fringe{x}{\T}) = 1\}$ can be seen to comprise a (possibly trivial) segment $[[r,s]]_{\T}$, for some endpoint $s\in\T$. 
 The \emph{subtree spanned by (the closed support of) $p$} is %the rooted, weighted $\BR$-tree $(\Span(p),d,s,p)$, where
 \begin{equation}
  \Span(p) := \bigcup_{x\in \textnormal{support}(p)}[[s,x]]_{\cT} .%\right).
 \end{equation}
 The \emph{special points} of $(\T,d,r,p)$ are:
 \begin{enumerate}[label=(\alph*), ref=(\alph*)]
  \item the locations of atoms of $p$,\label{item:special_atoms}
  \item the branch points of $\Span(p)$, and\label{item:special_BPs}%; i.e.\ points $x\in\T$ for which
%  \begin{equation}\label{eq:special_BPs}
%   p\big( \fringe{x}{\T} \big) > \sup\big\{ p\big(\fringe{y}{\T}\big)\colon y\in \fringe{x}{\T}\setminus\{x\}\big\}.
%  \end{equation}
  \item the \emph{isolated leaves} of $\Span(p)$, by which we mean leaves of $\Span(p)$ that are not limit points of the branch points of $\Span(p)$.\label{item:special_leaves}
%  \begin{equation}\label{eq:special_leaves}
%   p\big( \fringe{x}{\T}\setminus\{x\} \big) = 0 < p\big(\fringe{y}{\T}\big) \quad \text{for every }y\in [[r,x]]_{\T}\setminus\{x\}.
%  \end{equation}
%  (This formula, \eqref{eq:special_leaves}, characterizes the leaves of $\Span(p)$)
 \end{enumerate}
\end{definition}

\begin{definition}\label{def:mass_struct}
 Let $\scS_i$ denote the set of special points of a tree $(\T_i,d_i,r_i,p_i)$ for $i=1,2$. 
 A \emph{mass-structural isomophism} between these $\BR$-trees is a bijection $\phi\colon \scS_1\to\scS_2$ with the following properties.
 \begin{enumerate}[label=(\roman*), ref=(\roman*)]
  \item \emph{Mass preserving}. For every $x\in\scS_1$, 
%  \begin{equation*}
%   p_1\{x\} = p_2\{f(x)\}, \quad p_1\big(\fringe{x}{\T_1}\big) = p_2\big(\fringe{f(x)}{\T_2}\big), \quad \text{and}\quad p_1\big([[r_1,x]]_{\T_1}\big) = p_2\big([[r_2,f(x)]]_{\cT_2}\big).
%  \end{equation*}
  $p_1\big([[r_1,x]]_{\T_1}\big) = p_2\big([[r_2,\phi(x)]]_{\cT_2}\big)$, $p_1\{x\} = p_2\{\phi(x)\}$, and $p_1\big(\fringe{x}{\T_1}\big) = p_2\big(\fringe{\phi(x)}{\T_2}\big)$. \label{item:m_s:m}
  
  \item \emph{Structure preserving}. For $x,y\in\scS_1$ we have $x\in [[r_1,y]]_{\cT_1}$ if and only if $\phi(x)\in [[r_2,\phi(y)]]_{\cT_2}$.\label{item:m_s:s}
 \end{enumerate}
 
 We say that two rooted, weighted $\BR$-trees are \emph{mass-structurally equivalent} if there exists a mass-structural isomorphism from one to the other. It is straightforward to confirm that this is an equivalence relation.
\end{definition}

\begin{definition}\label{def:IP_tree}
 A rooted, weighted real tree $(\T,d,r,p)$ is an \emph{interval partition tree} (\emph{IP tree}) if it possesses the following properties.
 \begin{description}
  \item[Spanning] Every leaf of $\T$ is in the closed support of $p$, i.e.\ $\T = \Span(p)$.% I.e.\ every open ball containing a leaf has positive $p$ measure.
  \item[Spacing] For $x\in\T$, if $x$ is either a branch point or lies in the closed support of $p$ then
   \begin{equation}\label{eq:IP_tree_spacing}
    d(r,x) + p(\fringe{x}{\T}) = 1.
   \end{equation}
 \end{description}
\end{definition}

%In fact, the property $p_1[[x,y]] = p_2[[f(x),f(y)]]$ can be derived from the other properties of mass-structural isomorphisms. Since this is not needed here, we leave the proof to the reader.

%We remark that \eqref{eq:IP_tree_spacing} implies that $(\T,d)$ has diameter at most 2.

\begin{theorem}\label{thm:MS_rep}
 %For each rooted, weighted real tree, the set of IP trees that are mass-structurally equivalent to it comprise a single (non-empty) isomorphism class.
 Each mass-structural equivalence class of rooted, weighted $\BR$-trees contains exactly one isomorphism class of IP trees.
\end{theorem}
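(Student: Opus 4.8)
The proof naturally splits into an existence half and a uniqueness half, and both turn on the same principle: in an IP tree the metric and the measure are rigidly determined by the mass-structure, because the Spacing identity ties root-distance to fringe-mass.

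\textit{Uniqueness.} I would first argue that a mass-structural isomorphism $\phi\colon\scS_1\to\scS_2$ between IP trees is forced to be a root-preserving isometry on special points. Every special point is a branch point or lies in the closed support (an atom, or an isolated leaf, the latter by the Spanning property), so Spacing applies and gives $d_i(r_i,s)=1-p_i(\fringe{s}{\T_i})$; since $\phi$ preserves $p(\fringe{\cdot}{})$ it preserves distance to the root, and by mass-preservation $\phi(r_1)=r_2$ whenever $r_1$ is special. For $s,t\in\scS_1$ the divergence point $s\wedge t$ (with $[[r_1,s]]_{\T_1}\cap[[r_1,t]]_{\T_1}=[[r_1,s\wedge t]]_{\T_1}$) is again in $\scS_1$ — it is one of $s,t$, a branch point of $\Span(p_1)$, or $r_1$ — and property (ii) makes $\phi$ an isomorphism for the ancestor order, hence $\phi(s\wedge t)=\phi(s)\wedge\phi(t)$ (in the degenerate case $s\wedge t=r_1$ both sides of the distance identity below vanish). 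Thus $d_1(s,t)=d_1(r_1,s)+d_1(r_1,t)-2d_1(r_1,s\wedge t)$ is preserved, so $\phi$ together with $r_1\mapsto r_2$ is a distance-preserving bijection of $\{r_1\}\cup\scS_1$ onto $\{r_2\}\cup\scS_2$. Next I would show that an IP tree is the closed convex hull of $\{r\}\cup\scS$: since $\T=\Span(p)=\overline{\bigcup_{x\in\textnormal{support}(p)}[[r,x]]_{\T}}$ and $\overline{\bigcup_{s\in\scS}[[r,s]]_{\T}}$ is closed and ancestor-closed, it suffices to place each support point in this set, which one checks case by case — an atom, branch point of $\Span(p)$, or isolated leaf is in $\scS$; a non-isolated leaf is a limit of branch points; and any other support point lies below an isolated leaf or a branch point. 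Since a distance-preserving bijection of spanning sets extends uniquely (using completeness) to an isometry of the closed convex hulls, $\phi$ extends to a root-preserving — hence ancestor-preserving — isometry $\Phi\colon\T_1\to\T_2$ with $\Phi|_{\scS_1}=\phi$. Finally $(\T_2,d_2,r_2,\Phi_*p_1)$ is again an IP tree on the same rooted tree as $(\T_2,d_2,r_2,p_2)$, with the same atom masses and the same fringe masses at every point of $\scS_2$, and I would conclude $\Phi_*p_1=p_2$ from the rigidity discussed below.

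\textit{Existence.} Given $(\T,d,r,p)$, I would first replace $\T$ by $\Span(p)$ (the identity on special points is a mass-structural isomorphism onto the result), so that Spanning holds, and adjoin $r$ to $\scS$ if need be. I would then build the representative from the mass-structure alone: take the rooted $\BR$-tree spanned by $\{r\}\cup\scS$ in which $s$ sits at distance $1-p(\fringe{s}{\T})$ from the root — this function is non-decreasing along segments away from $r$ and increases strictly past every special point, so $(s,t)\mapsto\big(1-p(\fringe{s}{})\big)+\big(1-p(\fringe{t}{})\big)-2\big(1-p(\fringe{s\wedge t}{})\big)$ is an $\BR$-tree pseudometric on $\{r\}\cup\scS$; pass to its quotient metric space and complete. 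On the segment joining consecutive special points $s$ (nearer $r$) and $t$, whose length is then $p(\fringe{s}{})-p(\fringe{t}{})$, place an atom of mass $p\{t\}$ at $t$ when $t$ is an atom, and length measure, on the sub-segment of length $p(\fringe{s}{})-p(\fringe{t}{})-p\{s\}-\sum_i p(\fringe{s_i}{})$ abutting $t$, where $s_i$ runs over the first special points on the other branches leaving $s$; the remaining mass (carried on limit points of special points) is fixed by continuity of $x\mapsto p(\fringe{x}{})$. I would then verify that the resulting completed, weighted, rooted $\BR$-tree is an IP tree — Spanning is immediate, and Spacing is exactly the design constraint — and that the tautological bijection of special points is a mass-structural isomorphism with $(\T,d,r,p)$.

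\textit{Main obstacle.} The delicate point in both halves is the rigidity of Spacing. In the existence argument one must check that the re-metrized, ``stretched'' object is genuinely an $\BR$-tree; the subtlety is at branch points of $\Span(p)$ where mass departs in several directions, and this is precisely why the bare stretches must be inserted with exactly the right length — a naive re-metrization by $x\mapsto 1-p(\fringe{x}{})$ collapses arcs that should have positive length and produces disconnected gaps near such branch points. In the uniqueness argument the hard step is correspondingly the last: that a rooted $\BR$-tree carries at most one IP-tree measure once the atom masses and fringe masses at the special points are prescribed. The argument is triangular, solved from the leaves inward — Spacing fixes the atom mass $1-d(r,\ell)$ at each leaf $\ell\neq r$, and then the mass on each arc between consecutive special points is forced, the distance measured along the arc toward its deeper endpoint recording the mass carried past it — but it demands care with non-atomic support on arcs, with atoms at interior points of arcs, and with accumulation points of branch points, where one must invoke continuity of $x\mapsto p(\fringe{x}{})$.
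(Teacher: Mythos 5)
Your overall architecture matches the paper's: existence of a mass-structurally equivalent IP tree, plus the statement that mass-structurally equivalent IP trees are isomorphic (Propositions \ref{prop:MSE_to_constr_tree} and \ref{prop:IP_tree_MSE}). Your uniqueness half is essentially the paper's proof of Proposition \ref{prop:IP_tree_MSE}: show that $\phi$ is a root-preserving isometry on $\scS_1\cup\{r_1\}$, extend it to an isometry of the full trees (the paper does this by ``overshooting'' and ``approximation'' rather than by abstract extension over spanning sets, but the content is the same), and then use the rigidity of Spacing to force the measures to agree arc by arc. Your existence half is a genuinely different route: the paper obtains the equivalent IP tree indirectly, by sampling i.i.d.\ points from $p$, forming the induced hierarchy, and running the $\ell_1$-reconstruction of Section \ref{sec:tree_construction} (Theorem \ref{thm:hier_rep_constr}, Propositions \ref{prop:its_a_bead_crush} and \ref{prop:MSE_to_constr_tree}), whereas you re-metrize $\Span(p)$ directly by the height function $1-p(\fringe{\cdot}{\T})$. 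The direct route is attractive and I believe it can be made to work, but as written it contains a genuine error.

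The error is in the placement of the length measure. On the new arc from $s$ to $t$, of length $p(\fringe{s}{\T})-p(\fringe{t}{\T})$, Spacing forces the measure to be length measure on the terminal sub-segment whose length is the $p$-mass of the \emph{old open arc} from $s$ to $t$, namely $p(\fringe{s}{\T})-p(\fringe{t}{\T})-p\{s\}-\sum_i p(B_i\setminus\{s\})$ with $B_i$ the full sibling bushes at $s$; you instead subtract $\sum_i p(\fringe{s_i}{\T})$ for the first special points $s_i$ of those bushes, which omits any diffuse skeleton mass carried on the initial arcs between $s$ and $s_i$. Concretely, take the Y-shaped tree with root $r$, one branch point $s$, and $p$ equal to normalized length measure on the two arcs above $s$ (mass $\tfrac12$ each): your recipe assigns length measure of total mass $1$ to \emph{each} new arc, so the constructed ``measure'' has total mass $2$ and violates Spacing, whereas the correct assignment is mass $\tfrac12$ per arc supported on the half abutting the leaf. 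Two further gaps in the same step: consecutive special points need not exist (branch points can be dense along an arc, as in the Brownian IP tree), and the diffuse measure on the non-isolated leaf set is not actually pinned down by ``continuity'' --- you need an explicit measurable projection $\Span(p)\to\T^*$ along which to push $p$ forward, which is exactly the bookkeeping the paper's sampling construction handles for free.
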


In light of this theorem, the isomorphism classes of IP trees can be taken as representatives of the mass-structural equivalence classes. We could refer to the isomorphism class of IP trees that are mass-structurally equivalent to a given rooted, weighted $\BR$-tree as the \emph{mass-structure} of that tree (though we will not).%IP trees and mass-structural equivalence are related to ideas explored in a previous paper on exchangeable random hierarchies \cite{FormHaulPitm17}.

\begin{definition}\label{def:hierarchy}
 A {\em hierarchy} on a finite set $S$ is a collection $\cH$ of subsets of $S$ such that 
 \begin{enumerate}[label=(\alph*), ref=(\alph*)]
  \item if $A, B \in \cH$ then $A \cap B$ equals either $A$ or $B$ or $\varnothing$, and
  \item $S \in \cH$, $\varnothing \in \cH$, and $\{s\} \in \cH$ for all $s \in S$.
 \end{enumerate}
 
 Permutations act on hierarchies by relabeling the contents of constituent sets: if $\cH$ is a hierarchy on $[n]$ and $\pi$ a permutation of $[n]$, then 
 \begin{equation*}
  \pi( \cH ):= \{ \{\pi(j): j \in A\}\colon A \in \cH\}.
 \end{equation*}
 A random hierarchy $\cH$ on $[n] := \{1,2,\ldots,n\}$ is \emph{exchangeable} if
 \begin{equation*}%\label{exch-hierarchy-defn}
  \pi(\cH) \stackrel{d}{=} \cH \qquad \text{for every permutation }\pi\text{ of }[n].
 \end{equation*}
 
 We adopt the convention that a \emph{hierarchy on $\BN$} is a sequence $(\cH_n,\,n\ge1)$, with each $\cH_n$ a hierarchy on $[n]$, with the consistency condition that
 $$\cH_n = \Restrict{\cH_{n+1}}{[n]} := \big\{ A\cap [n]\colon A\in\cH_{n+1} \big\} \qquad \text{for }n\ge1.$$
 We call $(\cH_n,\,n\ge1)$ \emph{exchangeable} if every $\cH_n$ is exchangeable. Exchangeable hierarchies on $\BN$ were studied in \cite{FormHaulPitm17}. This method of representing an infinite combinatorial object by a projectively consistent family has often been used to study exchangeable infinite structures; see \cite{MR1457625}, \cite[Chapter 2.2]{MR2245368}.
 
 A random hierarchy $(\cH_n,\,n\ge 1)$ on $\BN$ is \emph{independently generated} if for every $N$ and every vector $(A_1,\ldots ,A_k)$ of disjoint subsets of $[N]$, the restrictions $\big(\restrict{\cH_N}{A_1},\ldots,\restrict{\cH_N}{A_k}\big)$ of $\cH_N$ to these subsets are mutually independent. We write \emph{e.i.g.}\ to abbreviate ``exchangeable and independently generated.'' By way of analogy with de Finetti's theorem for exchangeable sequences, in \cite[Theorem 2]{FormHaulPitm17}, it was shown that exchangeable laws of hierarchies on $\BN$ can be represented as convex combinations of e.i.g.\ laws.
\end{definition}

A hierarchy on a finite set $S$ can be constructed by recursively partitioning the set, and then partitioning the resulting blocks, until only singletons remain. The collection of all subsets obtained at any point in this process comprise a hierarchy on $S$. Such a hierarchy can be represented as a tree rooted at $S$, with the non-empty blocks of the hierarchy being the nodes and the singleton blocks, in particular, being the leaves. %For more background on hierarchies and related objects, see \cite{FormHaulPitm17}.

A nested topic model is an exchangeable hierarchy used as the basis for a machine learning algorithm to arrange a collection of documents by topic and subtopic (and sub-subtopic, etc.), or to classify documents as mixtures of subtopics \cite{NestedCRP,NHDP}. Rather than being given a fixed hierarchy of topics, such algorithms infer natural topic clusterings from the set of documents they are given. Exchangeable hierarchies also relate to fragmentation and coagulation processes \cite{BertoinFragCoag}, in which sets break down or aggregate over time. Hierarchies differ from these processes in that they do not give an account of the \emph{times} at which sets join or break apart; they only describe which sets eventually arise in such a process. Hierarchies relate to other phylogenetic models, as well, such as phylogenetic trees \cite{SempleSteel}. A more complete catalog of references related to exchangeable hierarchies can be read from \cite{FormHaulPitm17}.

%For $(\T,d,r,p)$ a rooted, weighted real tree and $x\in\T$, the \emph{fringe subtree} of $\T$ rooted at $x$ is
%\begin{equation}
% \fringe{x}{\T}:= \{ y \in \T\colon x \in [[r,y]]\}.\label{fringe-subtree-defn}
%\end{equation}
Now, consider a rooted, weighted $\BR$-tree $(\T,d,r,p)$. Let $(x_i,\,i\ge1)$ be an i.i.d.\ random sequence with law $p$. Set
\begin{equation}
 \cH_n := \big\{\{ i \in [n]\colon x_i\in \fringe{x}{\T}\} \colon x\in\T\big\} \qquad \text{for } n\ge1.
\end{equation}
We say that $(\cH_n,\,n\ge1)$ is \emph{derived by sampling from $(\T,d,r,p)$}. Let $\Theta(\T,d,r,p)$ denote the law of $(\cH_n,\,n\ge1)$. This is an e.i.g.\ law. If two rooted, weighted real trees are isomorphic, then $\Theta$ maps them to the same law. If $\scT$ is an isomorphism class of such trees, we write $\Theta(\scT)$ to denote the unique e.i.g.\ law that appears in the image of the class under $\Theta$.

\begin{theorem}\label{thm:MS_Theta}
 Two rooted, weighted $\BR$-trees are mass-structurally equivalent if and only if they have the same image under $\Theta$.
\end{theorem}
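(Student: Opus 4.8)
The plan is to prove the two implications of the biconditional separately, using Theorem~\ref{thm:MS_rep} to reduce the reverse direction to a rigidity statement about IP trees. Throughout, for a rooted weighted $\BR$-tree $(\T,d,r,p)$ with i.i.d.\ sample $(x_i,\,i\ge1)$ of law $p$, I write $x\preceq y$ for $x\in[[r,y]]_\T$, so that $\fringe{x}{\T}=\{y:x\preceq y\}$ and the derived hierarchy is $\cH_n=\{\{i\le n:x\preceq x_i\}:x\in\T\}$; I abbreviate $\Theta(\T)$ for $\Theta(\T,d,r,p)$. Two elementary facts will be used repeatedly: (i) $\cH_n$ is a \emph{deterministic} function of the restriction of the tree-order $\preceq$ to $\{r,x_1,\dots,x_n\}$, i.e.\ it records exactly the marked combinatorial shape of the finite tree $\Span(r,x_1,\dots,x_n)$; and (ii) for fixed $x$ the indicators $\mathbf{1}\{x\preceq x_i\}$ are i.i.d.\ Bernoulli of parameter $p(\fringe{x}{\T})$, so $\frac1n\#\{i\le n:x\preceq x_i\}\to p(\fringe{x}{\T})$ a.s., and likewise $\frac1n\#\{i\le n:x_i=a\}\to p\{a\}$. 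I also use, recalling the discussion above, that isomorphic trees have the same image under $\Theta$.

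\emph{Mass-structurally equivalent $\Rightarrow$ same $\Theta$-image.} Let $\phi\colon\scS_1\to\scS_2$ be a mass-structural isomorphism. I would build, on one probability space, i.i.d.\ samples $(x_i)$ from $p_1$ and $(y_i)$ from $p_2$ whose derived hierarchies coincide a.s.; by (i) it suffices to make $\Span(r_1,x_1,\dots,x_n)$ and $\Span(r_2,y_1,\dots,y_n)$ have the same marked shape for all $n$. I would do this by recursion on $n$, carrying the stronger hypothesis that the reduced tree $\Span(r_1,x_1,\dots,x_n)$, decorated by the $p_1$-masses of all subtrees of $\T_1$ hanging off it and by the $p_1$-atoms at its marked points, is carried by a correspondence extending $\phi$ to the analogously decorated reduced tree on the $\T_2$ side. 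Granting this, a hanging component $B$ of $\T_1$ corresponds to a component $B'$ of $\T_2$ of equal $p$-mass; viewed as rooted weighted trees with mass renormalized to $1$, $B$ and $B'$ are again mass-structurally equivalent (special points, atoms, and fringe masses are local, hence matched by the restriction of $\phi$). One then samples $x_{n+1}\sim p_1$, which with the appropriate probability falls in $B$ and is then distributed as $p_1|_B$ renormalized, and couples it with a draw $y_{n+1}$ obtained by applying the same construction to the pair $(B,B')$; the updated marked reduced trees and their decorations again correspond, and passing to the limit yields the coupling.

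\emph{Same $\Theta$-image $\Rightarrow$ mass-structurally equivalent.} Suppose $\Theta(\T_1)=\Theta(\T_2)=:\mu$. By Theorem~\ref{thm:MS_rep} replace each tree by an IP-tree representative $\T_i^\circ$ of its mass-structural class; by the implication just proved $\Theta(\T_i^\circ)=\Theta(\T_i)$, so $\Theta(\T_1^\circ)=\Theta(\T_2^\circ)$, and it remains to show that $\Theta$ is injective on isomorphism classes of IP trees. So let $\T_1^\circ,\T_2^\circ$ be IP trees with common $\Theta$-image $\mu$; I would argue as follows. Draw $(\cH_n)\sim\mu$ and then, conditionally on $(\cH_n)$, independent samples $(x_i)\sim p_1^\circ$ and $(y_i)\sim p_2^\circ$ from the two regular conditional laws, so that both produce the hierarchy sequence $(\cH_n)$ a.s. For indices $i,j$ let $\langle i,j\rangle_n$ denote the $\subseteq$-smallest block of $\cH_n$ containing $\{i,j\}$; one checks $\langle i,j\rangle_n=\{k\le n:x_k\succeq x_i\wedge x_j\}$, and likewise for the $y$'s, so $\{k:x_k\succeq x_i\wedge x_j\}=\{k:y_k\succeq y_i\wedge y_j\}$ for all $i,j$, a.s. By (ii) $\frac1n\#\langle i,j\rangle_n$ converges both to $p_1^\circ(\fringe{x_i\wedge x_j}{\T_1^\circ})$ and to $p_2^\circ(\fringe{y_i\wedge y_j}{\T_2^\circ})$, so these fringe masses agree; and since $x_i\wedge x_j$ is either a support point or a branch point of $\Span(p_1^\circ)$, the \textbf{Spacing} axiom gives $d_1^\circ(r_1^\circ,x_i\wedge x_j)=1-p_1^\circ(\fringe{x_i\wedge x_j}{\T_1^\circ})$, and similarly on the other side, so root-distances of corresponding meets agree as well. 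In an IP tree, distinct elements of $D_1:=\{x_i\wedge x_j:i,j\ge1\}$ have distinct ``traces'' $\{k:x_k\succeq\cdot\}$ --- two incomparable elements have disjoint nonempty traces, and two comparable ones with equal root-distance coincide --- so $x_i\wedge x_j\mapsto y_i\wedge y_j$ is a well-defined root-preserving bijection $D_1\to D_2$ that preserves meets (the meet of two elements of $D_1$ is again some $x_a\wedge x_b$, identifiable from $(\cH_n)$) and root-distances, hence all pairwise distances. By the \textbf{Spanning} axiom $\T_i^\circ=\Span(p_i^\circ)$, and since $(x_i)$ is a.s.\ dense in $\supp(p_i^\circ)$ and every point of $\Span(p_i^\circ)$ lies below a support point, the subtree $\bigcup_i[[r_i^\circ,x_i]]$ is a.s.\ dense in $\T_i^\circ$; so the isometry $D_1\to D_2$ extends to a root-preserving isometry $\Psi\colon\T_1^\circ\to\T_2^\circ$. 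Finally $\Psi$ maps each $\fringe{x_i\wedge x_j}{\T_1^\circ}$ onto $\fringe{y_i\wedge y_j}{\T_2^\circ}$ with equal $p$-mass, and as these fringe masses determine the measure, $\Psi$ carries $p_1^\circ$ to $p_2^\circ$. Thus $\T_1^\circ\cong\T_2^\circ$, and therefore $\T_1$ and $\T_2$ are mass-structurally equivalent.

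\emph{Main obstacle.} I expect the crux to be the inductive step in the first implication: pinning down precisely what must be tracked beyond the bare definition of mass-structure --- namely the entire family of hanging-subtree masses (the ``interval partition'' of $p$ seen at each point of a reduced tree) together with the fact that hanging subtrees of mass-structurally equivalent trees are themselves mass-structurally equivalent --- and verifying that a mass-structural isomorphism genuinely determines this data. This is essentially the same content that powers Theorem~\ref{thm:MS_rep}; indeed the cleanest rendering of the first implication is probably to show that $\T$ and its IP representative have the same $\Theta$-image by pushing an i.i.d.\ sample forward along the structure-preserving map furnished by that theorem's proof, rather than constructing the coupling by hand. A lesser obstacle, in the second implication, is the density assertion that the $\BR$-subtree spanned by an i.i.d.\ sample is a.s.\ dense in $\Span(p)$, together with the ancillary claim that the fringe masses at the sampled meet-points determine the measure $p^\circ$.
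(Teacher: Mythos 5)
There is a genuine gap, and it sits exactly where you suspect: the forward implication (mass-structural equivalence $\Rightarrow$ equal $\Theta$-image) is only sketched, yet your entire argument rests on it --- your reverse implication needs $\Theta(\T_i^\circ)=\Theta(\T_i)$ for the IP representatives, which is an instance of the forward implication. The coupling induction you propose requires showing that a mass-structural isomorphism, which by definition is only a bijection of \emph{special} points preserving certain masses, determines the full decorated reduced-tree data. The genuinely delicate case is a stretch of $\Span(p_1)$ whose interior contains no special points: there $\phi$ records only the total mass of the stretch, not how the (necessarily diffuse, atom- and branch-free) mass is distributed along it. The missing idea is that the hierarchy law is insensitive to this: i.i.d.\ samples from any diffuse measure on a segment induce a uniformly random order, and no positive-mass bushes hang off such a stretch, so only total mass and order enter $\cH_n$. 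Without articulating this invariance the induction cannot close. Your fallback --- pushing a sample forward ``along the structure-preserving map furnished by Theorem \ref{thm:MS_rep}'s proof'' --- is not available either, since that proof furnishes no map defined on all of $\T_1$ (the extension $\psi$ in the proof of Proposition \ref{prop:IP_tree_MSE} is built only between two IP trees, using Spacing on both sides). The paper sidesteps the coupling entirely: Proposition \ref{prop:MSE_to_constr_tree} builds the IP representative \emph{from the sampled hierarchy itself}, so equality of $\Theta$-images is automatic from the r.c.d.\ statement of Theorem \ref{thm:hier_rep_constr}, and the technical work (Section \ref{sec:MSE_to_constr_tree}) goes into showing that this constructed tree is mass-structurally equivalent to the original; the forward implication then follows by transitivity and Proposition \ref{prop:IP_tree_MSE}.

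Your reverse direction, by contrast, is a legitimately different and essentially workable route: a direct proof that $\Theta$ is injective on isomorphism classes of IP trees via traces of meets, the law of large numbers for fringe masses, and Spacing to convert fringe masses into root-distances (the paper instead derives this injectivity, as Proposition \ref{prop:Theta_isom}, from Propositions \ref{prop:MSE_to_constr_tree} and \ref{prop:IP_tree_MSE}). Two steps need more care than you give them. First, $D_1$ itself need not be dense in $\T_1^\circ$ (take $[0,1/2]$ with $p=\frac12\delta_0+\frac12\delta_{1/2}$); you must first extend the map to $\Span(D_1)$ using meet-preservation and only then pass to the closure, with density of $\textsc{cl}(\Span(D_1))$ supplied by the fact that meets accumulate at each sample, as in Lemma \ref{lem:span_samp_approx}. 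Second, ``these fringe masses determine the measure'' is false for the fringe subtrees at points of $D_1$ alone: on a segment with no special points in its interior you must again invoke Spacing to locate the length-measure part, exactly as in Case 4 of the proof of Proposition \ref{prop:IP_tree_MSE}. Both are fixable, but the second is a real argument rather than an ancillary remark.
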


For a hierarchy $(\cH_n,\,n\ge1)$, we denote the associated tail $\sigma$-algebra by
\begin{equation}
 \tail(\cH_n) := \bigcap_{j\ge 1}\sigma\left(\restrict{\cH_k}{\{j,j+1,\ldots,k\}},\ k\ge j\right).
\end{equation}
We resolve \cite[Conjecture 1]{FormHaulPitm17} and strengthen Theorem 5, which was the main result of that paper, as follows.

\begin{theorem}\label{thm:hierarchies_IP}
 \begin{enumerate}[label=(\roman*), ref=(\roman*)]
  \item For $(\cH_n,\,n\ge 1)$ an exchangeable random hierarchy on $\BN$, there exists an a.s.\ unique, $\tail(\cH_n)$-measurable random isomorphism class of IP trees, $\scT$, such that $\Theta(\scT)$ is a regular conditional distribution (r.c.d.) for $(\cH_n,\,n\ge1)$ given $\tail(\cH_n)$.\label{item:hIP:rcd}
  \item The map $\Theta$ is a bijection from the set of isomorphism classes of IP trees to the set of e.i.g.\ laws of hierarchies on $\BN$.\label{item:hIP:bijection}
 \end{enumerate}
\end{theorem}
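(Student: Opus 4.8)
The plan is to establish part~\ref{item:hIP:bijection} first and then bootstrap to part~\ref{item:hIP:rcd} using the de~Finetti--type theorem of \cite{FormHaulPitm17}. For \ref{item:hIP:bijection}, injectivity of $\Theta$ on isomorphism classes of IP trees is immediate from the structural results already in hand: if $\Theta(\scT_1)=\Theta(\scT_2)$, then by Theorem~\ref{thm:MS_Theta} the two IP trees are mass-structurally equivalent, and by Theorem~\ref{thm:MS_rep} a mass-structural equivalence class contains only one isomorphism class of IP trees, so $\scT_1=\scT_2$. For surjectivity, let $\mu$ be an arbitrary e.i.g.\ law of hierarchies on $\BN$. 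The main representation theorem \cite[Theorem~5]{FormHaulPitm17} exhibits $\mu$ as the law of a hierarchy derived by sampling from some rooted, weighted $\BR$-tree $(\T,d,r,p)$; the representing tree may be taken deterministic because the e.i.g.\ laws are exactly the extreme points of the convex set of exchangeable laws (as in \cite{FormHaulPitm17}) and $\mu$ is such an extreme point. Thus $\Theta(\T,d,r,p)=\mu$. Applying Theorem~\ref{thm:MS_rep} to $(\T,d,r,p)$ gives an isomorphism class $\scT$ of IP trees mass-structurally equivalent to it, and Theorem~\ref{thm:MS_Theta} then yields $\Theta(\scT)=\Theta(\T,d,r,p)=\mu$. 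Hence $\Theta$ is a bijection.

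For part~\ref{item:hIP:rcd}, fix an exchangeable hierarchy $(\cH_n,\,n\ge1)$ on $\BN$ and let $\cE$ be its exchangeable $\sigma$-algebra, i.e.\ the events invariant under all permutations of $\BN$ that fix all but finitely many indices. By \cite[Theorem~2]{FormHaulPitm17} and the standard theory of such mixture representations, there is a directing random measure $\mathbf{m}$ --- a regular conditional distribution for $(\cH_n,\,n\ge1)$ given $\cE$ --- which is a.s.\ an e.i.g.\ law. By part~\ref{item:hIP:bijection}, $\mathbf{m}=\Theta(\scT)$ for a unique isomorphism class $\scT$ of IP trees, and $\scT$ is a Borel function of $\mathbf{m}$: indeed $\Theta$ is a Borel bijection between the standard Borel space of isomorphism classes of IP trees and the standard Borel space of e.i.g.\ laws, so $\Theta^{-1}$ is Borel by the Lusin--Souslin theorem. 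The crucial claim is that $\mathbf{m}$, and hence $\scT$, is $\tail(\cH_n)$-measurable. Granting this, observe that every event in $\tail(\cH_n)$ is invariant under each permutation fixing all but finitely many indices --- such a permutation, moving only indices below some $j$, leaves every restriction $\restrict{\cH_k}{\{j,j+1,\ldots,k\}}$ unchanged --- so $\tail(\cH_n)\subseteq\cE$; consequently a $\tail(\cH_n)$-measurable version of the regular conditional distribution given $\cE$ is automatically a regular conditional distribution given $\tail(\cH_n)$, and $\Theta(\scT)$ is an r.c.d.\ for $(\cH_n,\,n\ge1)$ given $\tail(\cH_n)$. A.s.\ uniqueness of $\scT$ then follows from a.s.\ uniqueness of regular conditional distributions together with the injectivity of $\Theta$.

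The one substantive step --- and the resolution of \cite[Conjecture~1]{FormHaulPitm17}, which \cite[Theorem~5]{FormHaulPitm17} stops short of --- is the tail-measurability of $\mathbf{m}$. The route I would take is an a.s.\ ergodic / strong-law statement for e.i.g.\ hierarchies: conditionally on $\mathbf{m}$, for each finite hierarchy pattern $h$ on $[k]$ the empirical frequency of $h$ among the restrictions $\restrict{\cH_{j+k}}{\{j+1,\ldots,j+k\}}$ (relabelled canonically to $[k]$), averaged over $0\le j<n$, converges a.s.\ as $n\to\infty$ to the $\mathbf{m}$-probability that $\cH_k=h$. Because dropping any finite initial segment of these averages does not affect the Ces\`aro limit, that limit is measurable with respect to $\sigma\big(\restrict{\cH_k}{\{j,\ldots,k\}},\,k\ge j\big)$ for every $j$, hence $\tail(\cH_n)$-measurable, and the collection of such limits determines $\mathbf{m}$; integrating out $\mathbf{m}$ shows the same holds under the original law. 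The ``independently generated'' hypothesis is exactly what decouples these far-out windows enough for the a.s.\ convergence to hold; this upgrades, to the setting of hierarchies, the classical fact that the exchangeable and tail $\sigma$-algebras of an exchangeable sequence agree up to null sets. I expect the bookkeeping of this ergodic argument --- together with the routine but necessary verification that isomorphism classes of IP trees and e.i.g.\ laws carry compatible standard Borel structures for which $\Theta$ is Borel --- to be the main technical burden; everything else reduces to Theorems~\ref{thm:MS_rep} and~\ref{thm:MS_Theta} and the cited results of \cite{FormHaulPitm17}.
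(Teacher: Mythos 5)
Your part~\ref{item:hIP:bijection} is essentially the paper's argument: injectivity via Theorems~\ref{thm:MS_Theta} and~\ref{thm:MS_rep} is the content of Proposition~\ref{prop:Theta_isom}, and surjectivity via the representation theorem of \cite{FormHaulPitm17} plus passage to the mass-structurally equivalent IP tree matches the paper's appeal to Theorem~\ref{thm:hier_rep_constr} and Proposition~\ref{prop:its_a_bead_crush}. (Your extreme-point justification for taking the representing tree deterministic is a reasonable substitute for the paper's implicit use of tail-triviality of e.i.g.\ laws.)

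For part~\ref{item:hIP:rcd}, however, you have both taken a longer route than necessary and left its central step unproved. You condition on the exchangeable $\sigma$-algebra $\cE$, obtain the directing measure $\mathbf{m}$, and then identify tail-measurability of $\mathbf{m}$ as ``the one substantive step,'' to be supplied by an ergodic/strong-law argument for windowed restrictions that you only sketch. That sketch is plausible (disjoint windows are independent under an e.i.g.\ law, so an SLLN applies, and dropping finitely many windows does not change the Ces\`aro limit), but as written it is not a proof, and it carries additional unverified baggage: that the space of isomorphism classes of IP trees under Gromov--Prokhorov is standard Borel and that $\Theta$ is Borel, so that $\Theta^{-1}$ can be applied measurably to $\mathbf{m}$. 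More importantly, the difficulty you have isolated is not where the paper locates it. Theorem~\ref{thm:hier_rep_constr}, which restates Theorem~5 of \cite{FormHaulPitm17}, already asserts that $\Theta(\T,\ell_1,0,p)$ for the explicitly constructed tree of Section~\ref{sec:tree_construction} is an r.c.d.\ for the hierarchy given $\tail(\cH_n)$ --- the construction is built from the spinal limits $X^i_j$ of \eqref{eq:spinal_def}, which are manifestly tail-measurable, so no separate ergodic argument is needed and no abstract inverse of $\Theta$ is ever taken. The genuinely new work in the paper's proof of \ref{item:hIP:rcd} is Proposition~\ref{prop:its_a_bead_crush} (the constructed tree is an IP tree) together with Proposition~\ref{prop:Theta_isom} (so that the isomorphism class $\scT$, not just the law $\Theta(\scT)$, is tail-measurable and a.s.\ unique); your proposal gets the analogous conclusions for free from Theorems~\ref{thm:MS_rep} and~\ref{thm:MS_Theta}, which is legitimate, but it replaces the already-available tail-measurable construction with an incomplete argument. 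To repair your write-up with minimal change, replace the $\cE$-conditioning and ergodic step by a direct citation of Theorem~\ref{thm:hier_rep_constr}; the rest of your logic (that a tail-measurable version of an r.c.d.\ given a larger $\sigma$-algebra is an r.c.d.\ given the tail, and that uniqueness follows from injectivity of $\Theta$) is sound.
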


This theorem is a hierarchies analogue to Kingman's paintbox theorem \cite{Kingman78}, which describes exchangeable partitions of $\BN$, or to de Finetti's theorem for exchangeable sequences of random variables \cite{KallenbergSymm}.

% We make this precise in Definition \ref{def:mass_struct}.

We recall \cite[Example 1]{FormHaulPitm17}.

\begin{example}\label{eg:naive}
 We think of the following as a hierarchy on the interval $[0,3)$:
 \begin{equation*}%\label{triple-H-1}
 \begin{split}
  \mathscr{H} &:= \{[0,1), [1,2), [2,3)\} \cup\left\{ \bigcup\nolimits_{n \geq 1} \left\{\left[\frac{j}{2^n}, \frac{j+1}{2^n}\right)\colon 0 \leq j \leq 2^n-1\right\} \right\}\\
 		&\qquad \cup \{[x,3)\colon 2 < x < 3\} \cup \{\{x\}\colon x\in [0,3)\} \cup \{[0,3),\varnothing\}.
 \end{split}
 \end{equation*}
 Let $(s_i,\,i\ge1)$ be an i.i.d.\ sequence of Uniform$[0,3)$ random variables, and define an e.i.g.\ hierarchy on $\BN$ by
 \begin{equation}\label{triple-H-2}
  \cH_n := \{\{i \in [n]\colon s_i \in B\}\colon B \in \mathscr{H}\} \qquad \text{for }n\geq 1.
 \end{equation}
 %The sequence $(s_i)$ is exchangeable, and therefore so is the hierarchy $(\cH_n)$.
\end{example}

\begin{figure}[thbp] %  figure placement: here, top, bottom, or page
   \centering
   \includegraphics[height=1.1in,width=4in]{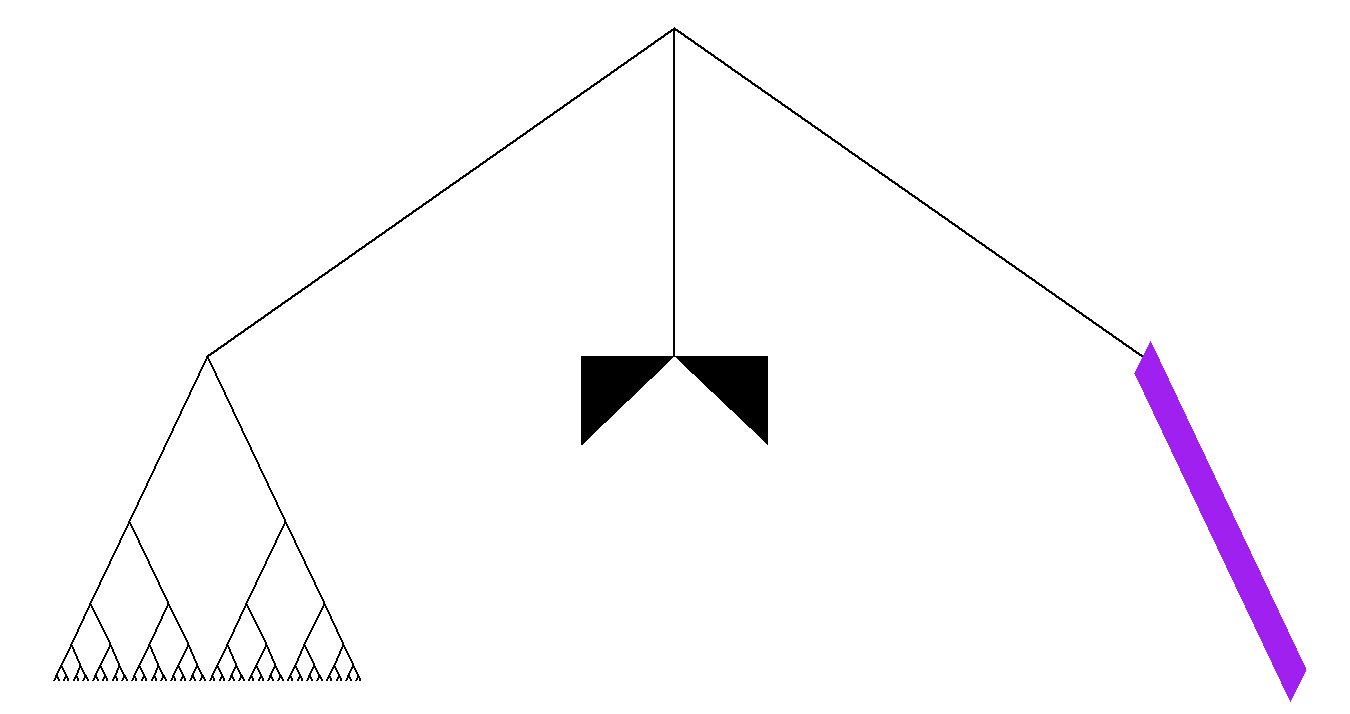}
   \caption{IP tree representation of the hierarchy in \eqref{triple-H-2}, as in Theorem \ref{thm:hierarchies_IP}\ref{item:hIP:bijection}. The wedge represents an atom. The heavy, shaded line represents continuous mass on the skeleton.\label{fig:naive}}
   %\raisebox{-.5\height}{\includegraphics[height=.8in,width=2.67in]{Fig_triple_split_v3.png}} \qquad
   %\raisebox{-.5\height}{\includegraphics[height=1.0in,width=1.8in]{Fig_triple_split_IP.png}}
   %\caption{Left: Approximate graph of the hierarchy in \eqref{triple-H-2}. Right: IP tree representation of its law. The wedge shape represents an atom in the measure. The heavy, shaded line on the right represents continuous measure on the skeleton. There is also continuous measure on the leaves.}
\end{figure}

In \cite{FormHaulPitm17}, the authors pose the ``Na\"ive conjecture'' that exchangeable hierarchies are characterized by a mixture of the three behaviors exhibited in Example \ref{eg:naive}: macroscoping branching, broom-like explosion, and comb-like erosion. This is formalized in Conjecture 2 of that paper, which is verified by Theorem \ref{thm:hierarchies_IP} above and the following.

\begin{theorem}\label{thm:measure_decomp}
 For $(\T,d,r,p)$ an IP tree, $p$ can be decomposed uniquely as $p^a + p^s + p^l$, with $p^a$ purely atomic, $p^s$ the restriction of length measure to a subset of the skeleton of $\T$, and $p^l$ a diffuse measure on the leaf set of $\T$.
\end{theorem}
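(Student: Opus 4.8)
The plan is to peel off the three summands in turn, the first two essentially for free. A finite Borel measure has a unique splitting into its atomic and diffuse parts, and in any decomposition of the stated form $p^{a}$ must be the atomic part of $p$ (length measure $\lambda$ on a real tree is diffuse, and $p^{l}$ is diffuse by hypothesis). Writing $q:=p-p^{a}$ for the diffuse part and recalling that the skeleton $\cS$ and the leaf set partition $\T$, with $p^{s}$ carried by $\cS$ and $p^{l}$ by the leaves, we are forced to take $p^{l}:=\restrict{q}{\mathrm{leaves}}$ — automatically a diffuse measure on the leaf set — and $p^{s}:=\restrict{q}{\cS}$. Thus uniqueness holds and the theorem reduces to: $\restrict{q}{\cS}$ is the restriction of $\lambda$ to a Borel subset of $\cS$.

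Write $h(x):=d(r,x)$ and $m(x):=p(\fringe{x}{\T})$, so that the Spacing property reads $h(x)+m(x)=1$ at every branch point and every point of $\supp(p)$. The engine of the proof is the structural lemma: \emph{from every branch point $v$, each of the (at least two) tree-components $C$ of $\fringe{v}{\T}\setminus\{v\}$ begins with a non-trivial arc that contains no branch point and is disjoint from $\supp(p)$.} Indeed, if some $C$ met $\supp(p)\cup\{\text{branch points}\}$ at points $u_{n}\to v$, then Spacing gives $m(u_{n})=1-h(u_{n})\to 1-h(v)=m(v)$, and since $\fringe{u_{n}}{\T}\subseteq C$ this forces $p(C)\ge m(v)$; but the components of $\fringe{v}{\T}\setminus\{v\}$ are pairwise disjoint and each has positive mass (each contains a leaf, hence a point of $\supp(p)$, by the Spanning property), so comparing $m(v)=p(\fringe{v}{\T})$ with the sum of the masses of two such components yields a contradiction.

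Two consequences follow. First, the arcs the lemma produces — at least two per branch point — are pairwise disjoint (if two met, their union would be a branch-point-free arc with a branch point at each end, which is impossible), so by separability there are only countably many of them, whence $\T$ has only countably many branch points; the branch-point set is then $\lambda$-null and, $q$ being atomless, $q$-null. Second, along a maximal branch-point-free arc $N$ the Spacing identity has no ``lateral'' terms: for $x,y\in N\cap\supp(p)$ with $x\in[[r,y]]_{\T}$ one has $\fringe{x}{\T}\setminus\fringe{y}{\T}=[[x,y]]_{\T}\setminus\{y\}$, so $p([[x,y]]_{\T})=m(x)-m(y)=d(x,y)=\lambda([[x,y]]_{\T})$. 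Hence $p$ restricted to $N$ is $\lambda$ on the sub-arc spanned by $N\cap\supp(p)$, in particular atomless there, so $q$ coincides with $\lambda$ on it. Letting $B$ be the union of these countably many ``bare support arcs'' (a Borel set), we get $\restrict{q}{B}=\restrict{\lambda}{B}$.

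Finally one checks $q(\cS\setminus B)=0$. Carried by $\supp(p)$ and ignoring the countable branch-point set, $\restrict{q}{\cS}$ is carried by the degree-two points of $\supp(p)$; such a point either has a branch-point-free neighbourhood — and so lies in the interior of a maximal branch-point-free arc, hence in $B$ up to the countably many arc-endpoints (which are $q$-null) — or is a limit of branch points. The set $Z$ of the latter is countable: any $x\in Z$ is, after passing to divergence points, a monotone limit of branch points $b_{n}$, consecutive $b_{n}$'s are separated by one of the lemma-arcs, and these lemma-arcs are disjoint for distinct points of $Z$, so $Z$ injects into the countable family of lemma-arcs. Thus $q$ is carried by $B$ and $\restrict{q}{\cS}=\restrict{\lambda}{B\cap\cS}$, completing the proof. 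The main obstacle is the structural lemma — equivalently, that an IP tree has only countably many branch points, each insulated by branch-point-free, mass-free arcs: once this holds, the diffuse ``comb'' mass is trapped on a countable family of bare arcs along which Spacing forces it to equal length measure, and the rest is bookkeeping (the delicate point being the countability of the limit-of-branch-points set $Z$).
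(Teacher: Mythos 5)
Your uniqueness argument and your structural lemma (every branch point is insulated, on the fringe side, by non-trivial arcs containing no branch points and no points of $\supp(p)$) are both correct, and the lemma is a nice intrinsic consequence of Spacing. But the existence half breaks in two places, the second fatally. First, from $p([[x,y]]_{\T}\setminus\{y\})=d(x,y)$ for $x,y\in N\cap\supp(p)$ you conclude that $p$ is atomless and equal to length measure $\lambda$ on the sub-arc spanned by $N\cap\supp(p)$; this is false whenever $N\cap\supp(p)$ has gaps, which is the generic ``bead'' situation: on a gap $(a,b)$ of the support the same identity forces an atom $p\{a\}=b-a$. That slip is repairable (the correct conclusion is $\restrict{q}{N}=\restrict{\lambda}{N\cap\supp(p)}$, not $\restrict{\lambda}{\text{spanned arc}}$). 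Second, and not repairable within your scheme: the set $Z$ of degree-two support points that are limits of branch points need not be countable, and it can carry \emph{all} of the diffuse skeletal mass. The paper's Example \ref{eg:fat_Cantor} (the fat Cantor IP tree) is a counterexample: along each branch the branch points are the left endpoints of the removed intervals, which are dense in the fat Cantor set $A_\infty$, while the diffuse part of $p$ on that branch is length measure restricted to $A_\infty$ --- an uncountable set of positive length, every point of which outside a countable set lies in $Z$. Your injection of $Z$ into the family of lemma-arcs fails because one lemma-arc separates ``consecutive'' branch points for uncountably many $x\in Z$ at once, so the assignment is nowhere near injective. And since your $B$ is built only from branch-point-free arcs, in this example $\restrict{q}{B}=0$ while $q(\cS)>0$: the proof does not reach the conclusion.

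The case your argument cannot touch --- diffuse mass sitting on the closure of a dense set of branch points --- is exactly where the work lies, and it is why Spacing must be applied not to $p$ restricted to an arc but to the projection of $p$ onto a spine, with each bush collapsed to an atom at its root; only then does Spacing produce a uniformized measure whose diffuse part Gnedin's Lemma \ref{lem:uniformization} identifies as a restriction of Lebesgue measure. The paper sidesteps the intrinsic analysis entirely by first proving Theorem \ref{thm:bead_crush_rep} (every IP tree arises from bead-crushing) and then reading off the claim branch by branch from the uniformized inputs $q_n$. An intrinsic proof along your lines would have to redo your computation for the spinal projections $p_y$ rather than for $\restrict{p}{N}$ on branch-point-free arcs $N$.
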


The idea is that broom-like explosions in the hierarchy correspond to atoms in the measure $p^a$, comb-like erosion corresponds to diffuse measure $p^s$ on the skeleton, and macroscopic splitting corresponds to branch points, with the set of singletons that are eventually isolated by repeated splitting corresponding to continuous measure $p^l$ on the leaves.
In light of Theorems \ref{thm:hierarchies_IP} and \ref{thm:measure_decomp}, IP trees may be understood as recipes for combining and interspersing these three behaviors. Up to isomorphism, they contain no more and no less information than this.

In Section \ref{sec:real_IP_tree} we discuss a general ``bead-crushing'' construction of IP trees and the related notion of strings of beads, from \cite{PitmWink09}. Section \ref{sec:tree_construction} recounts relevant background from \cite{FormHaulPitm17} relating hierarchies to CRTs, then connects this material to IP trees. The main mathematical work of the paper is done in Section \ref{sec:key_props}, with proofs of two key propositions building towards the main results, all of which are then proved in Section \ref{sec:thm_pfs}. Finally, in Section \ref{sec:complements} we offer some final thoughts and open questions, including a discussion of the Brownian CRT in the context of the ideas of this paper.

\section{Interval partition trees}\label{sec:real_IP_tree}

We will construct IP trees as subsets of the following space.

\begin{definition}\label{def:l1}
 Let $\ell_1$ denote the Banach space of absolutely summable sequences of reals under the norm $\|(x_i,\,i\ge1)\| = \sum_i|x_i|$. We write $\ell_1(x,y) := \|y-x\|$. Let $(\Be_j,\, j \geq 1)$ be the coordinate vectors, $\Be_1 = (1, 0, 0, \ldots)$, \mbox{$\Be_2  = (0,1,0,\ldots)$}, etc.. 
 For $m \geq 1$ let $\pi_m$ denote the orthogonal projection onto span$\{\Be_1, \ldots, \Be_m\}$, and let $\pi_0$ send everything to $(0,0,\ldots)$, which we denote by $0$.
 Let \cl\ denote the topological closure map on subsets of $\ell_1$.
\end{definition}

\begin{definition}\label{def:special_path}
 Following Aldous \cite{MR1207226}, for $x\in \ell_1$ let $[[0,x]]_{\ell}$ denote the path that proceeds from 0 to $x$ along successive directions. In particular, 
 \begin{equation}\label{special-path}
  [[0,x]]_{\ell} := \{x\} \cup \bigcup_{m \geq 0} \{t\pi_m(x)+ (1-t)\pi_{m+1}(x): t\in [0,1]\}.
 \end{equation}
 For $x,y\in\ell_1$ with all non-negative coordinates, 
 $$[[0,x]]_{\ell}\cap [[0,y]]_{\ell} = [[0,z]]_{\ell}$$
 for some $z\in \ell_1$, possibly equal to zero. We define %$(x\wedge y)_\ell := z$. Moreover, we take
 \begin{equation}\label{eq:tree_wedge_def}
  (x\wedge y)_\ell := z, \qquad [[x,y]]_{\ell} := \big([[0,x]]_{\ell} \cup [[0,y]]_{\ell}\setminus [[0,z]]_{\ell}\big) \cup \{z\}.
 \end{equation}
\end{definition}

For example, if $x = 2\Be_1 + \Be_3$ then $[[0,x]]_\ell$ is a union of two segments parallel to the first and third coordinate axes, $\Be_1[0,2] \cup (2\Be_1 + \Be_3[0,1])$. Generally, if $x$ has only finitely many non-zero coordinates then the last of these segments terminates at $x$, and the singleton $\{x\}$ on the right hand side in \eqref{special-path} becomes redundant.

%Note that, in \eqref{special-path}, it is necessary to take the closure in order to handle the case where $x=(x_1, x_2, \ldots)$ does not terminate in zeros, i.e when $x_j >0$ for infinitely many $j$. In this case, prior to taking the closure, the set on the right hand side is missing the limit point $\{x\}$.

\begin{definition}\label{def:uniformize}
 We call a probability measure $q$ with compact support $K\subseteq [0,\infty)$ \emph{uniformized} if $q[0,x) = x$ for every $x\in K$. Let $F\colon \BR\to [0,1]$ be a cumulative distribution function for a probability measure $\mu$ on $\BR$. The \emph{uniformization} of $\mu$ is the measure $q$ on $[0,1]$ specified by $q[0,x] = \inf(\text{range}(F)\cap [x,1])$.
\end{definition}

Note that the uniformization of a measure is uniformized. 

\begin{lemma}\label{lem:1D_IPT}
 A probability measure $q$ on $\BR$ is uniformized if and only if $([0,L],d,0,q)$ is an IP tree, where $d$ is the Euclidean metric and $L$ is the maximum of the compact support of $q$.
\end{lemma}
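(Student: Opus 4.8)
The plan is to unwind the definition of an IP tree when $\T = [0,L]$ carries the Euclidean metric $d$ and is rooted at $0$, and to see that it reduces verbatim to the uniformization condition. The first step is to record the relevant geometry. For $0 \le y \le L$ the segment $[[0,y]]$ in $[0,L]$ is the ordinary interval $[0,y]$; consequently $([0,L],d,0)$ has no branch points, since each point has at most two directions and so cannot be the common endpoint of three segments whose pairwise intersections are all that single point; and for $x \in [0,L]$ the fringe subtree is $\fringe{x}{[0,L]} = \{y : x \in [0,y]\} = [x,L]$, so that $p(\fringe{x}{[0,L]}) = q([x,L])$.

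Next I would dispose of the Spanning property, which is automatic here: since $\supp(q)$ is compact its supremum is attained and, by hypothesis, equals $L$, so $\Span(q) = \bigcup_{x\in\supp(q)}[0,x] = [0,L] = \T$. (Under the equivalent phrasing that every leaf of $\T$ lies in $\supp(q)$: the leaves of $[0,L]$ are $0$ and $L$; one has $L = \max\supp(q)\in\supp(q)$ always, while $0\in\supp(q)$ is part of the hypothesis in the converse direction and, for a uniformized $q$, follows from $\min\supp(q) = q\big([0,\min\supp(q))\big) = 0$.)

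It remains to match the Spacing identity with uniformization. Since there are no branch points, Spacing asserts precisely that $d(0,x) + q(\fringe{x}{[0,L]}) = x + q([x,L]) = 1$ for every $x \in \supp(q)$. As $\supp(q) \subseteq [0,L] \subseteq [0,\infty)$ we have $q([x,L]) = q([x,\infty)) = 1 - q([0,x))$, so this identity is equivalent to $q([0,x)) = x$; hence $([0,L],d,0,q)$ satisfies Spacing if and only if $q([0,x)) = x$ for all $x\in\supp(q)$, which is exactly the definition of a uniformized measure. Combined with the automatic Spanning property, this proves both implications. There is no genuine obstacle here --- the argument is a direct unwinding of definitions --- so the only points needing care are the half-open/closed interval bookkeeping in $q([x,L]) = 1 - q([0,x))$, the vacuous handling of branch points, the tacit verification that a uniformized $q$ (with $\supp(q)\subseteq[0,L]$ and $L<\infty$) makes $([0,L],d,0,q)$ a legitimate rooted, weighted $\BR$-tree, and the degenerate case $L = 0$, in which $q = \delta_0$, the space is a single point, and both sides of the equivalence hold trivially.
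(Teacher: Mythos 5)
Your proof is correct and follows essentially the same route as the paper's (much terser) argument: Spanning is automatic from the choice of $L$ as the maximum of the support, and Spacing, in the absence of branch points, unwinds directly to the identity $q[0,x)=x$ on the support. The extra bookkeeping you supply (the half-open interval conversion, the leaf at $0$, the degenerate case $L=0$) is all consistent with what the paper leaves implicit.
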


\begin{proof}
 The Spanning property follows from our definition of $L$. The Spacing property is then equivalent to the uniformization property.
\end{proof}

\subsection{The bead-crushing construction of IP trees}\label{sec:bead_crush}

The following is an extension of the general line-breaking construction of $\BR$-trees \cite{MR1207226,CuriHaas16} (also see \cite{AldoPitm00,GoldHaas15}), modified to construct IP trees. Our construction is illustrated in Figure \ref{fig:bead_crush}. The name ``bead-crushing'' refers to strings of beads in a continuum random tree, described by Pitman and Winkel \cite{PitmWink09}, which we discuss in Section \ref{sec:string_of_beads}. We discuss Pitman and Winkel's bead-crushing construction, which differs somewhat from ours, in Section \ref{sec:ATCRT_recovery}. That construction was generalized to a larger family of self-similar CRTs by Rembart and Winkel \cite{RembWinkString}.

\begin{figure}[p]
 \centering
  $q_1$: \raisebox{-.45\height}{\includegraphics[width=1.43in,height=.54in]{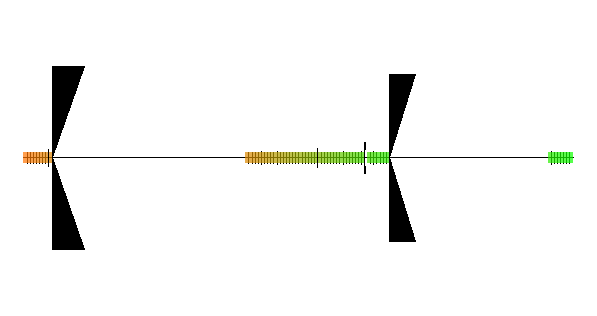}} \qquad\qquad
  $q_2$: \raisebox{-.45\height}{\includegraphics[width=1.43in,height=.54in]{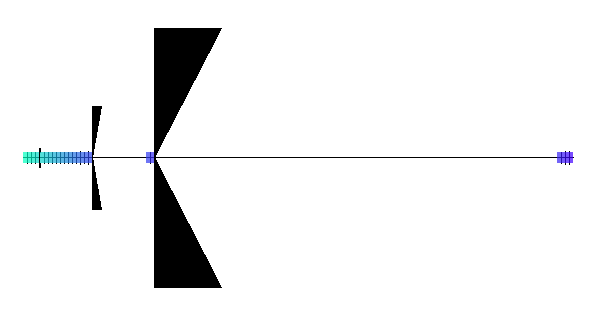}}\\
  $q_3$: \raisebox{-.45\height}{\includegraphics[width=1.43in,height=.54in]{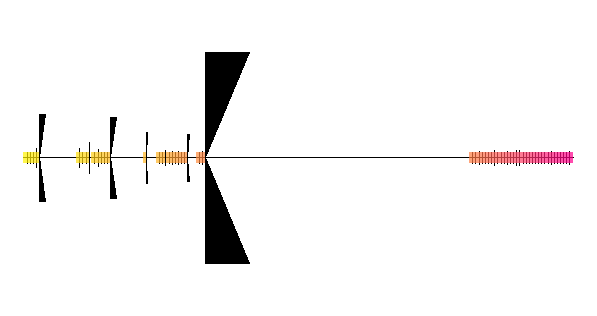}} \qquad\qquad
  $q_4$: \raisebox{-.45\height}{\includegraphics[width=1.43in,height=.54in]{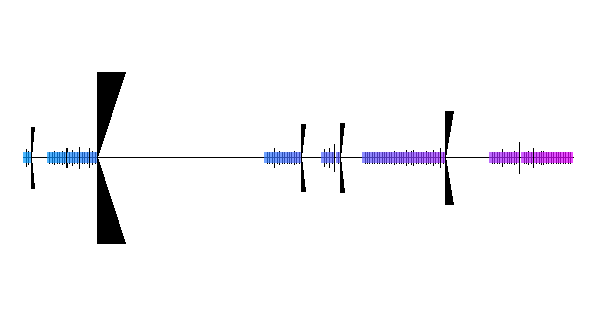}}\\[8pt]
  $\T_1$: \raisebox{-.5\height}{\includegraphics[width = 2.0in,height=1.1in]{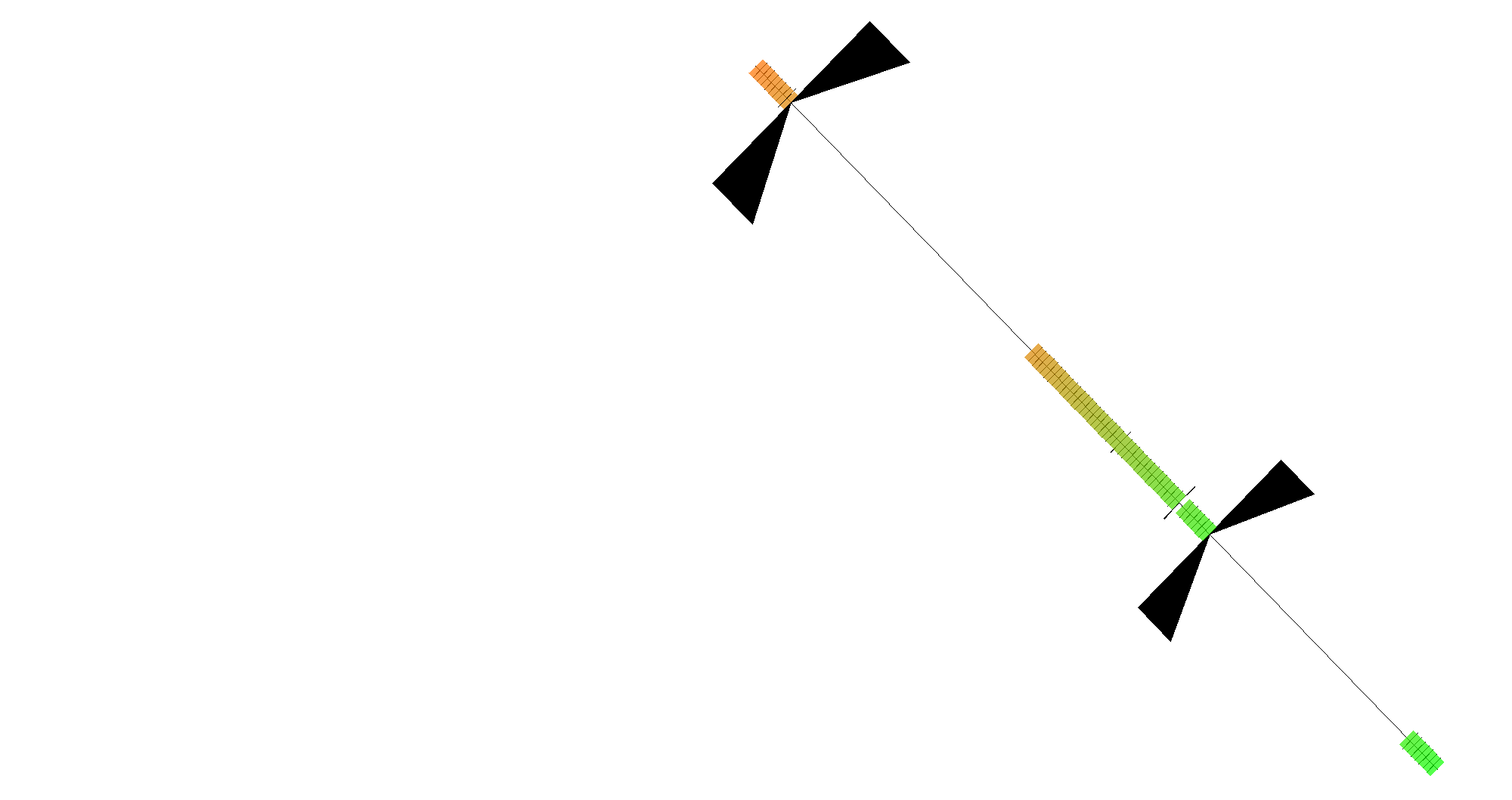}} \qquad 
  $\T_2$: \raisebox{-.5\height}{\includegraphics[width = 2.0in,height=1.1in]{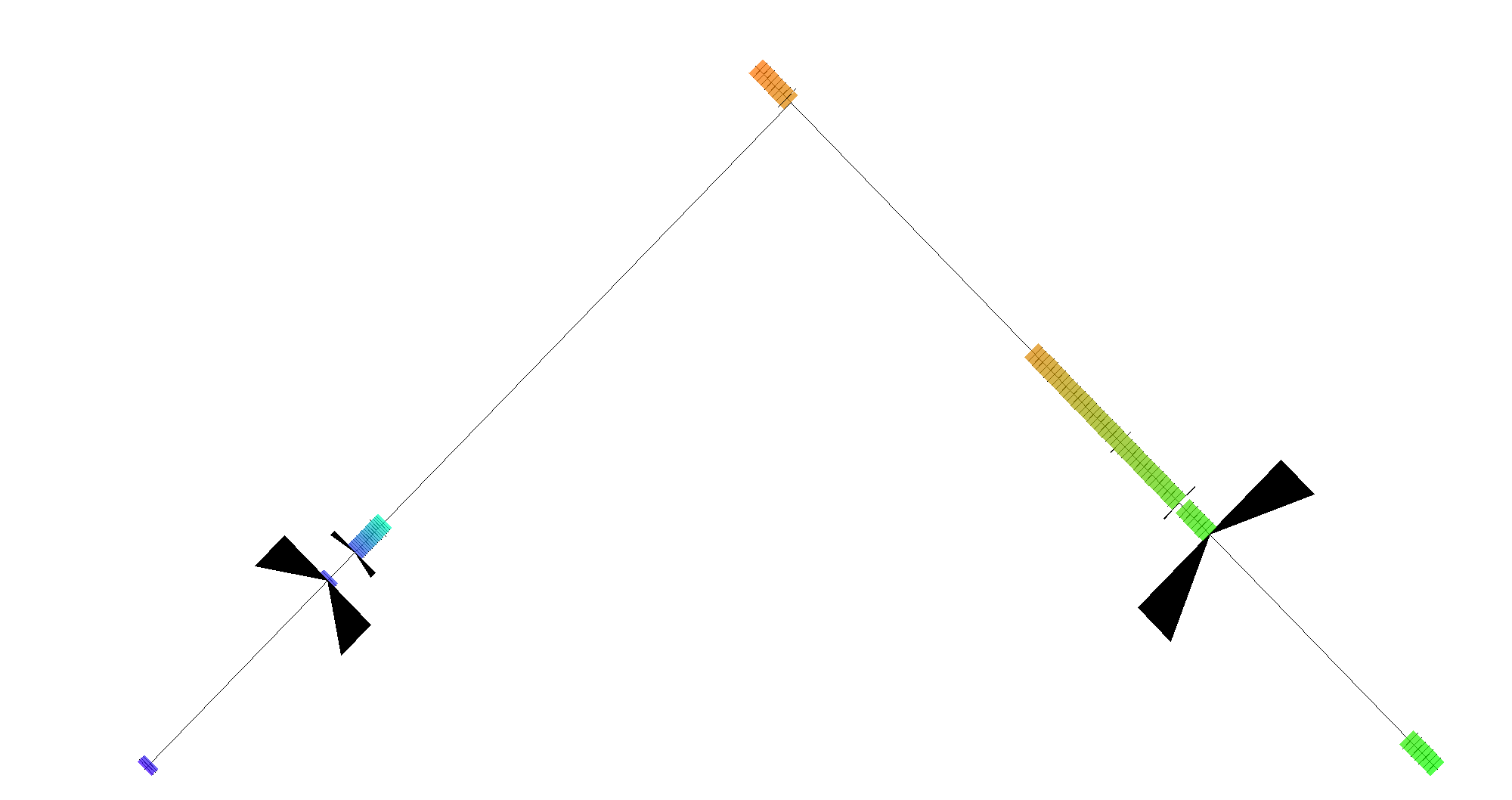}} \\[8pt]
  $\T_3$: \raisebox{-.5\height}{\includegraphics[width = 2.0in,height=1.1in]{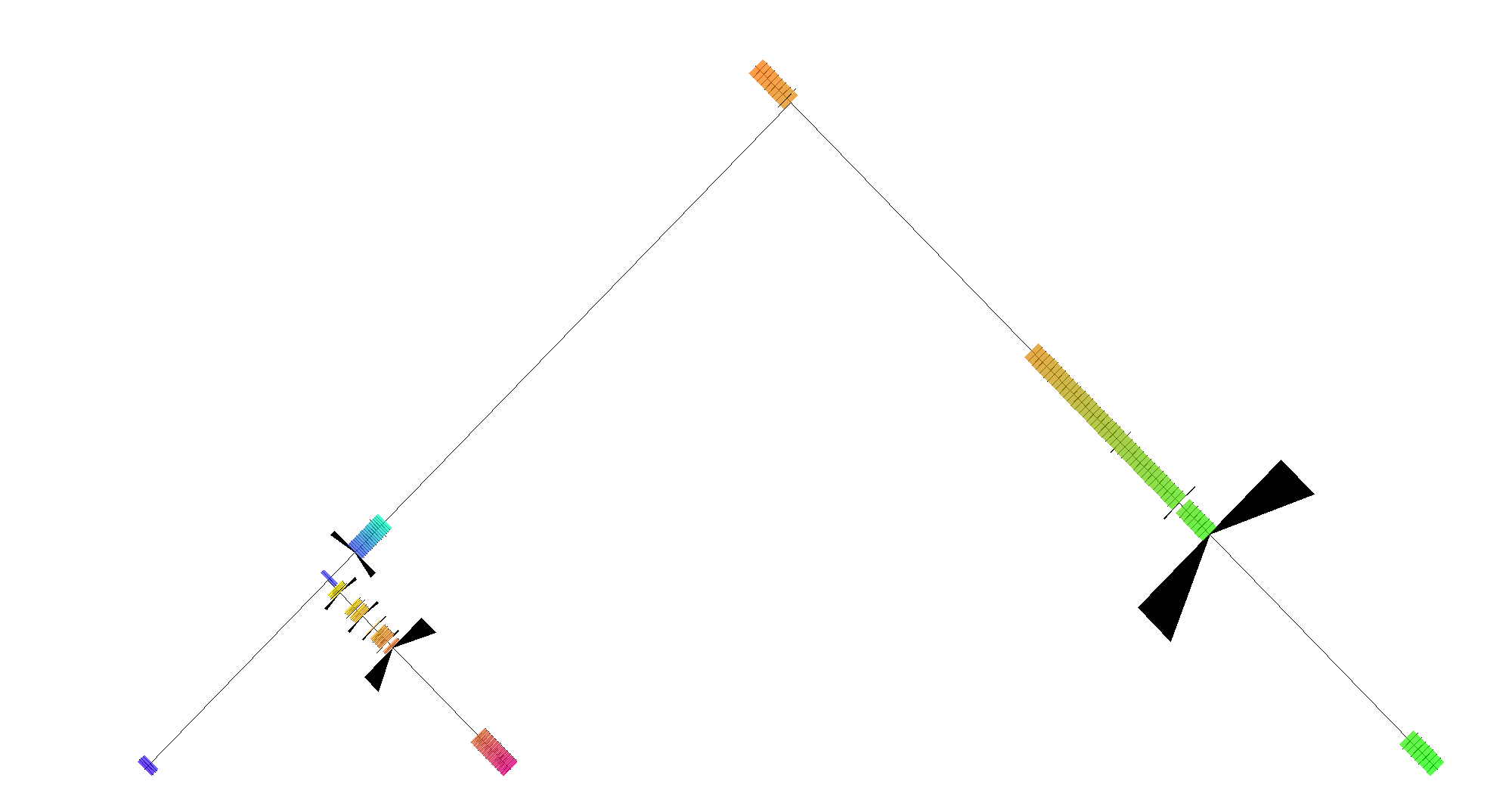}}\qquad
  $\T_4$: \raisebox{-.5\height}{\includegraphics[width = 2.0in,height=1.1in]{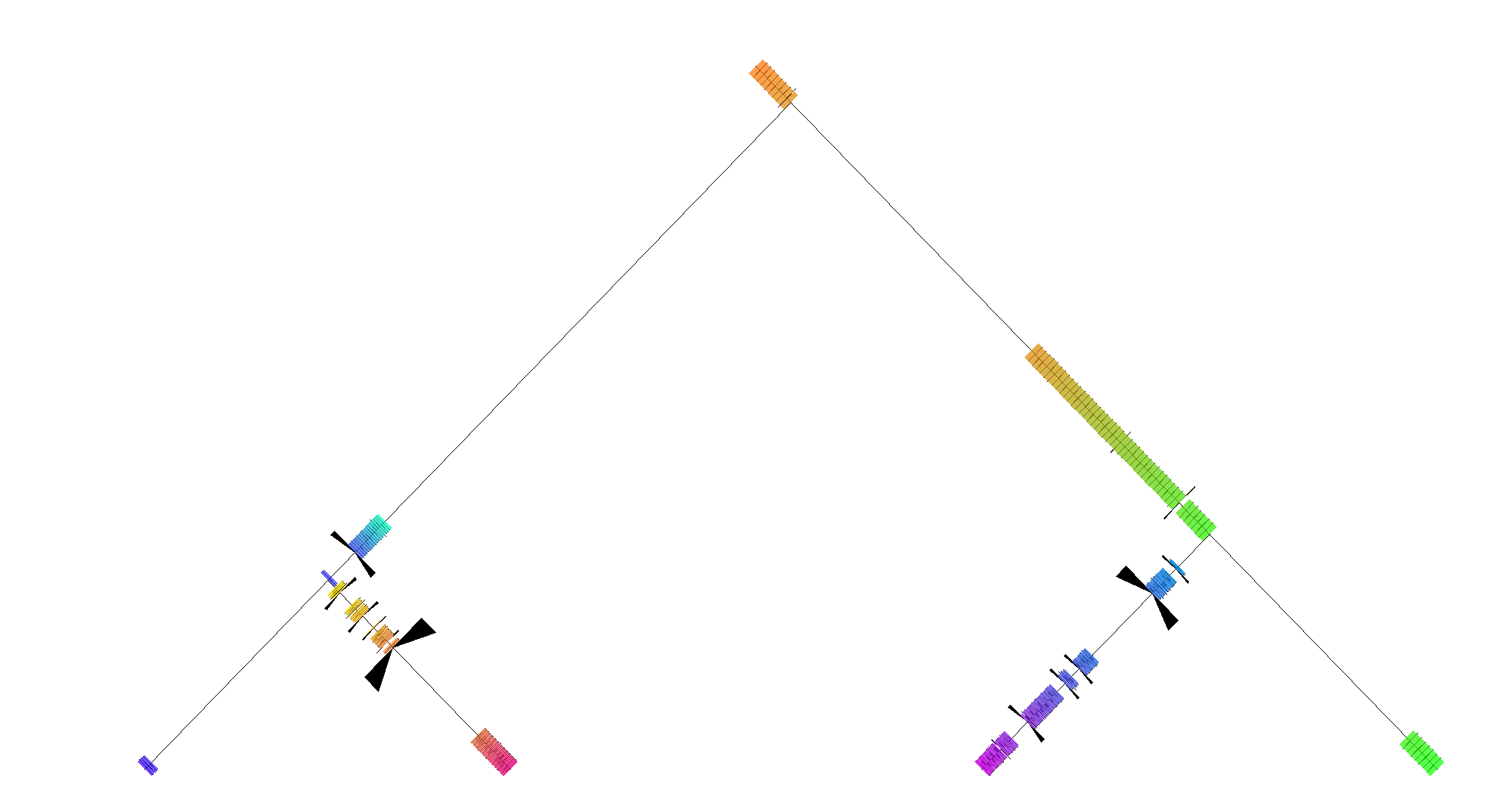}}\\[8pt]
  $\T_{120}$:\raisebox{-.5\height}{\includegraphics[width = 4.6in,height=2.53in]{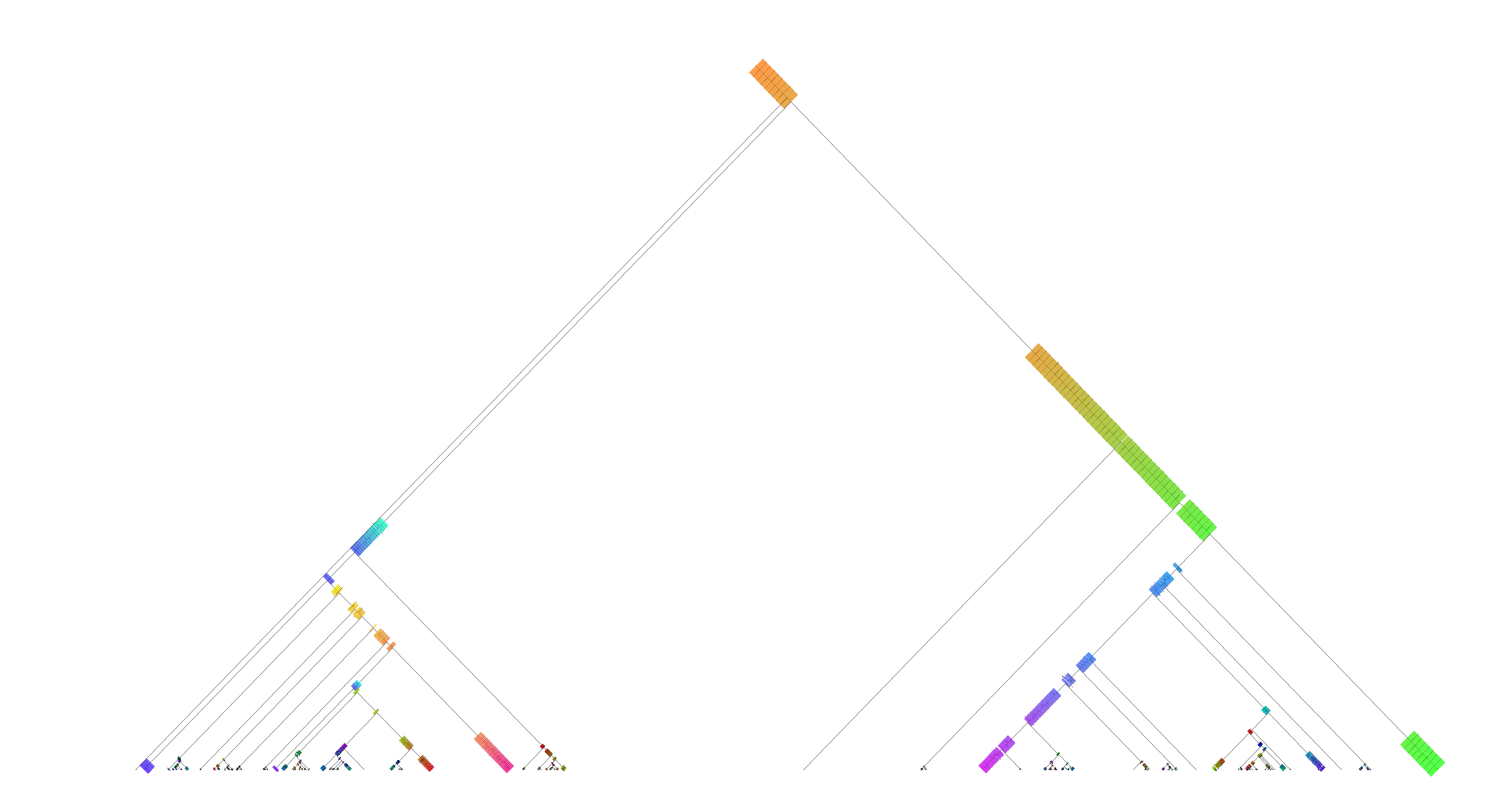}}
 \caption{The bead crushing construction described in Section \ref{sec:bead_crush}. In each tree image, the root is at the top and leaves are along a line at the bottom. Heavy, shaded lines mark subsets of the skeleton on which $q_j$ or $p_j$ equals length measure (in image of $\T_{120}$, we make these thinner to avoid branches appearing to overlap). Black wedge shapes, many of which are barely visible, represent atoms, or ``beads,'' of $q_j$ or $p_j$.\label{fig:bead_crush}}
\end{figure}

 Consider a sequence of uniformized probability measures $(q_n,\,n\ge 1)$, with $L_n := \max(\text{support}(q_n))$ for $n\ge1$. Note that each $q_n$ must have an atom $(1-L_n)\delta_{L_n}$ if $L_n < 1$. We define $\T_0 := \{0\}$ and $p_0 := \delta_0$, where here we take $0$ to denote the origin in $\ell_1$. We proceed recursively as follows.
 
 Assume $(\T_n,0,\ell_1,p_n)$ is a rooted, weighted $\BR$-tree embedded in the first $n$ coordinates in $\ell_1$. If $p_n$ has no atoms then we terminate the construction with this tree. Otherwise, fix an atom $m_n\delta_{x_n}$ of $p_n$. Fix $a_n \in (0,m_n]$. Set
 \begin{equation}\label{eq:bead_crush}
 \begin{split}
  \phi_n(z) &:= x_n + \big(p_n(\fringe{x_n}{\T_n}) + (z-1) a_n\big)\Be_{n+1} \quad \text{for }z\in [0,L_{n+1}],\\
%  \T_{n+1} &:= \T_n\cup \big(x_n + \Be_{n+1}\big[0,p_n(\fringe{x_n}{\T_n}) + (L_{n+1}-1)a_n\big]\big),\\
  \T_{n+1} &:= \T_n\cup [[x_n,\phi_n(L_{n+1})]]_{\ell},\\
  p_{n+1} &:= p_n + a_n\left(-\delta_{x_n} + \phi_n\left(q_{n+1}\right)\right),
 \end{split}
 \end{equation}
 where %$[[x_n,\phi_n(L_{n+1})]]$ denotes a straight line between these points, parallel to the $n+1^{\text{st}}$ coordinate axis, and 
 $\phi_n(q_{n+1})$ denotes the pushforward of the measure.
 
 Let $\T := \cl(\bigcup_{n \geq 1} \T_n)$. For every $N>n\ge 1$ we have $p_n = \pi_n(p_N)$, where $\pi_n$ is the projection map of Definition \ref{def:l1}. Thus, by the Daniell-Kolmogorov extension theorem, there exists a measure $p$ on $[0,1]^{\BN}$ such that $\pi_n(p) = p_n$ for every $n\ge1$. Moreover, since $(\T,\ell_1)$ is complete, $p$ is supported on $\T$.

Actually, the measures $p_n$ converge to $p$ in the first Wasserstein metric, though we will not use this.

\begin{proposition}\label{prop:bead_crush_IP}
 For any choice of sequences $(q_n,\,n\ge1)$, $(x_n,\,n\ge0)$, and $(a_n,\,n\ge0)$, the quadruple $(\T,\ell_1,0,p)$ arising from the above bead-crushing construction is an IP tree.
 %The resulting $(\T,\ell_1,0,p)$ is an IP tree.
\end{proposition}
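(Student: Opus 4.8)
The plan is to verify the two defining properties of an IP tree---Spanning and Spacing---directly from the recursive construction, using an inductive analysis of the finite-stage trees $(\T_n,\ell_1,0,p_n)$ together with a limiting argument. The key structural observation is that, by construction, at stage $n$ the atom $m_n\delta_{x_n}$ of $p_n$ is "crushed": the mass $a_n$ is removed from $x_n$ and redistributed, via the affine map $\phi_n$, along a new segment attached at $x_n$ in the fresh coordinate direction $\Be_{n+1}$, according to the uniformized measure $q_{n+1}$. The affine map $\phi_n$ is an isometry from $[0,L_{n+1}]$ onto $[[x_n,\phi_n(L_{n+1})]]_\ell$ scaled by $a_n$, and crucially its defining formula places the point $\phi_n(z)$ at distance $d(0,x_n) + (p_n(\fringe{x_n}{\T_n}) + (z-1)a_n)$ from the root---this is exactly the bookkeeping needed to make the Spacing identity propagate.

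First I would set up the induction. Let me track, for each stage $n$, the claim: for every $x\in\T_n$ that is either a branch point of $\T_n$ or lies in $\supp(p_n)$, one has $d(0,x) + p_n(\fringe{x}{\T_n}) = 1$; and moreover every leaf of $\T_n$ lies in $\supp(p_n)$. The base case $\T_0=\{0\}$, $p_0=\delta_0$ is immediate. For the inductive step, suppose the claim holds at stage $n$ and we crush the atom at $x_n$. Points of $\T_{n+1}$ split into those in $\T_n$ and those on the new segment (excluding $x_n$). For a point $y\in\T_n$, the fringe subtree $\fringe{y}{\T_{n+1}}$ either equals $\fringe{y}{\T_n}$ (if $x_n\notin[[0,y]]_\ell$) or equals $\fringe{y}{\T_n}$ with the new segment's mass swapped in for the removed atom at $x_n$ (if $x_n\in[[0,y]]_\ell$); in the latter case the total mass is unchanged because $a_n\cdot q_{n+1}[0,L_{n+1}] = a_n = p_{n+1}\{$new segment$\}$ and we also removed exactly $a_n$ from $x_n$. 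So $p_{n+1}(\fringe{y}{\T_{n+1}}) = p_n(\fringe{y}{\T_n})$ for all $y\in\T_n$, and the Spacing identity is inherited. For a point $\phi_n(z)$ on the new segment with $z\in\supp(q_{n+1})$ or $z=L_{n+1}$, I would compute $p_{n+1}(\fringe{\phi_n(z)}{\T_{n+1}}) = a_n\,q_{n+1}[z,L_{n+1}] = a_n(L_{n+1} - z) + a_n(1 - L_{n+1}) = a_n(1-z)$ using that $q_{n+1}$ is uniformized (so $q_{n+1}[0,z)=z$) and has the atom $(1-L_{n+1})\delta_{L_{n+1}}$; combined with $d(0,\phi_n(z)) = d(0,x_n) + p_n(\fringe{x_n}{\T_n}) + (z-1)a_n = 1 + (z-1)a_n$ (using the stage-$n$ identity at $x_n$, which is a branch point or support point since it carried an atom) this gives $d(0,\phi_n(z)) + p_{n+1}(\fringe{\phi_n(z)}{\T_{n+1}}) = 1$. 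The new branch point $x_n$ (if it becomes one) already satisfied the identity at stage $n$, and its fringe mass is unchanged, so it still does. Spanning at stage $n+1$: the only new leaf is $\phi_n(L_{n+1})$, which lies in $\supp(q_{n+1})$ pushed forward, hence in $\supp(p_{n+1})$.

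Finally I would pass to the limit $\T = \cl(\bigcup_n\T_n)$ with $p = \lim p_n$ (as established in the excerpt via Daniell--Kolmogorov, with $\pi_n(p)=p_n$ and $p$ supported on $\T$). The Spacing property for the limit: a branch point or support point $x$ of $\T$ is either already in some $\T_n$ as such (then use $\pi_n(p)=p_n$, noting $\fringe{x}{\T_n} = \pi_n(\fringe{x}{\T})$ so the masses agree and $d(0,x)$ is unchanged since $\T_n$ is embedded isometrically), or it is a limit of points $x^{(n)}\in\T_n$; here I would argue that both $x\mapsto d(0,x)$ and $x\mapsto p(\fringe{x}{\T})$ behave well enough along the skeleton (the first is continuous, the second is monotone along segments and its jumps are controlled by atoms) to take limits, or alternatively observe that the set where Spacing holds is closed within the relevant subset. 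The Spanning property for the limit reduces to showing every leaf of $\T$ is approximated by support points, which follows since leaves are limits of the leaves $\phi_n(L_{n+1})\in\supp(p)$ or already lie in $\supp(p_n)$. The main obstacle I anticipate is precisely this last limiting step---handling points of the skeleton of $\T$ that are not vertices of any finite $\T_n$ (accumulation points of branch points), where one must argue that the Spacing identity, an equality, survives the limit; this requires a careful continuity/monotonicity argument for $p(\fringe{\cdot}{\T})$ along the arc from the root, perhaps invoking that $p(\fringe{x}{\T}) \le 1 - d(0,x)$ holds by a closure/lower-semicontinuity argument and the reverse by approximation from within.
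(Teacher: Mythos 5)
Your finite-stage induction is essentially the paper's Lemma \ref{lem:bead_crush_IP_0}: the same conservation identity $p_{n+1}(\fringe{y}{\T_{n+1}}) = p_n(\fringe{y}{\T_n})$ for $y\in\T_n$, and the same computation $a_n\,q_{n+1}[z,L_{n+1}] = a_n(1-z)$ on the new branch via the uniformized property. (A small mislabeling: the case in which the atom at $x_n$ is swapped for the new branch's mass \emph{inside} $\fringe{y}{\T_{n+1}}$ is $y\in[[0,x_n]]_{\ell}$, not $x_n\in[[0,y]]_{\ell}$; but the conclusion holds in every configuration, so nothing is lost.)

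The gap is exactly where you anticipate it: the passage to the limit, which is where the paper's proof does its real work and which your proposal leaves as a sketch. For Spacing, ``the set where Spacing holds is closed'' is not automatic, since $x\mapsto p(\fringe{x}{\T})$ jumps at atoms and branch points. The paper's device is to approximate a point $y$ in the closed support of $p$ by its own coordinate projections $y_n := \pi_n(y)$, which lie in the closed support of $p_n$ and increase to $y$ along $[[0,y]]_{\ell}$; then $\fringe{y}{\T} = \bigcap_n \fringe{y_n}{\T}$ is a decreasing intersection, so countable additivity gives $p(\fringe{y}{\T}) = \lim_n p(\fringe{y_n}{\T}) = \lim_n p_n(\fringe{y_n}{\T_n}) = \lim_n (1-\|y_n\|) = 1-\|y\|$, using the projective identity $p(\fringe{y_n}{\T}) = p_n(\fringe{y_n}{\T_n})$. (Branch points of $\T$ all lie in some $\T_n$, where they carried atoms, so they are covered by the same identity.) For Spanning, your assertion that the tips $\phi_n(L_{n+1})$ lie in $\supp(p)$ is not justified: they lie in $\supp(p_{n+1})$, but membership in the support does not pass to the projective limit, since the inequality $p(B(x,\epsilon)) \le p_m(B(\pi_m(x),\epsilon))$ goes the wrong way. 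The paper instead argues in two cases for a leaf $y$: if $\|y\|<1$, the Spacing computation yields $p\{y\} = p(\fringe{y}{\T}) = 1-\|y\|>0$, so $y$ is an atom and certainly in the support; if $\|y\|=1$, one chooses $N$ with $\|y_N\|>1-\epsilon/4$ and a point $z$ on $[[0,y_N]]_{\ell}$ at distance $\epsilon/4$ from $y_N$, and uses $p(\fringe{z}{\T}) = p_N(\fringe{z}{\T_N})>0$ together with the fact that $\fringe{z}{\T}$ lies inside the $\epsilon$-ball about $y$ because no point of $\T$ is farther than $1$ from the origin. These two steps are what your ``approximation from within'' would have to become before the proof is complete.
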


We first prove the following.

\begin{lemma}\label{lem:bead_crush_IP_0}
 In the setting of Proposition \ref{prop:bead_crush_IP}, the trees $(\T_n,\ell_1,0,p_n)$, $n\ge1$, that arise from bead-crushing are IP trees.
\end{lemma}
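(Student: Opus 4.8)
The plan is to prove Lemma \ref{lem:bead_crush_IP_0} by induction on $n$, since the full Proposition \ref{prop:bead_crush_IP} will then follow by a limiting argument (noting that $\T = \cl(\bigcup_n \T_n)$ and that the Spanning and Spacing properties are closed conditions). The base case $n=0$ is the trivial tree $(\{0\},\ell_1,0,\delta_0)$, for which Spanning is immediate and Spacing holds vacuously (the only point $0$ satisfies $d(0,0) + p(\fringe{0}{\T_0}) = 0 + 1 = 1$). So assume $(\T_n,\ell_1,0,p_n)$ is an IP tree; I want to show $(\T_{n+1},\ell_1,0,p_{n+1})$ is too.

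\textbf{Spanning for $\T_{n+1}$.} The new tree is $\T_{n+1} = \T_n \cup [[x_n,\phi_n(L_{n+1})]]_\ell$, a segment grafted at the atom location $x_n$. The new leaves of $\T_{n+1}$ relative to $\T_n$ are at most the endpoint $\phi_n(L_{n+1})$ together with any leaves of $\T_n$ (the point $x_n$, if it was a leaf of $\T_n$, is no longer one). I must check every leaf of $\T_{n+1}$ lies in the closed support of $p_{n+1}$. For the endpoint $\phi_n(L_{n+1})$: since $q_{n+1}$ has maximum of support equal to $L_{n+1}$, the pushforward $\phi_n(q_{n+1})$ has $\phi_n(L_{n+1})$ in its closed support, hence so does $p_{n+1}$ (the subtracted atom $a_n\delta_{x_n}$ and the old $p_n$ only help or are irrelevant near the new endpoint). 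For old leaves of $\T_n$: they were in $\supp(p_n)$ by inductive hypothesis, and $p_{n+1} \geq p_n - a_n\delta_{x_n}$; since $a_n \leq m_n = p_n\{x_n\}$ and old leaves $\neq x_n$ (as $x_n$ is an atom location, it's a special point but the removal of $a_n$ from it doesn't affect support near other leaves), these leaves remain in $\supp(p_{n+1})$. Care is needed if $x_n$ was a leaf of $\T_n$ with $a_n = m_n$, exhausting its mass, but then $x_n$ is no longer a leaf of $\T_{n+1}$, so this is fine.

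\textbf{Spacing for $\T_{n+1}$.} This is the crux. I must verify $d(0,x) + p_{n+1}(\fringe{x}{\T_{n+1}}) = 1$ for every $x$ that is a branch point of $\T_{n+1}$ or lies in $\supp(p_{n+1})$. Split into cases. \emph{Case 1: $x \in \T_n$, $x \neq x_n$.} Then $\fringe{x}{\T_{n+1}}$ either equals $\fringe{x}{\T_n}$ (if $x_n \notin \fringe{x}{\T_n}$, i.e.\ $x$ is not on the path from $0$ to $x_n$) or equals $\fringe{x}{\T_n} \cup [[x_n,\phi_n(L_{n+1})]]_\ell$ with the point $x_n$ shared. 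In the first sub-case, $p_{n+1}(\fringe{x}{\T_{n+1}}) = p_n(\fringe{x}{\T_n})$ because the net change $a_n(-\delta_{x_n} + \phi_n(q_{n+1}))$ is supported off $\fringe{x}{\T_n}$; combined with $d(0,x)$ unchanged and the inductive Spacing for $\T_n$, we get $1$. In the second sub-case, the net change has total mass $a_n(-1 + 1) = 0$ and is entirely within $\fringe{x}{\T_{n+1}}$, so again $p_{n+1}(\fringe{x}{\T_{n+1}}) = p_n(\fringe{x}{\T_n})$ and we conclude. \emph{Case 2: $x = x_n$.} Here $d(0,x_n) + p_{n+1}(\fringe{x_n}{\T_{n+1}}) = d(0,x_n) + p_n(\fringe{x_n}{\T_n})$ by the mass-conservation just noted, which is $1$ by inductive Spacing (note $x_n$, being an atom of $p_n$, is a special point of $\T_n$, so Spacing applied to it is legitimate). \emph{Case 3: $x$ strictly inside the new segment, $x = \phi_n(z)$ for $z \in (0, L_{n+1}]$.} By the definition of $\phi_n$ in \eqref{eq:bead_crush}, the $\ell_1$-distance from $x_n$ to $\phi_n(z)$ is $\big|\,(p_n(\fringe{x_n}{\T_n}) + (z-1)a_n) - p_n(\fringe{x_n}{\T_n})\,\big| = (1-z)a_n$ when $z \le 1$ — wait, I should instead compute $d(0,\phi_n(z))$ directly: moving along coordinate $\Be_{n+1}$ from $x_n$ adds $\ell_1$-length equal to the $(n+1)$-st coordinate of $\phi_n(z)$, which is $p_n(\fringe{x_n}{\T_n}) + (z-1)a_n$, assuming this is nonnegative (it is, since $z \in (0,L_{n+1}]$ and... here I'd check $(z-1)a_n \geq -a_n \geq -m_n \geq -p_n(\fringe{x_n}{\T_n})$, using $x_n \in \supp(p_n)$ so $p_n(\fringe{x_n}{\T_n}) \geq m_n$). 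Thus $d(0,\phi_n(z)) = d(0,x_n) + p_n(\fringe{x_n}{\T_n}) + (z-1)a_n = 1 + (z-1)a_n$ by inductive Spacing for $x_n$. Meanwhile $\fringe{\phi_n(z)}{\T_{n+1}} = \phi_n([z, L_{n+1}])$, and $p_{n+1}$ restricted there equals $a_n \phi_n(q_{n+1})$ restricted to $\phi_n([z,L_{n+1}])$, which has mass $a_n q_{n+1}[z, L_{n+1}] = a_n(1 - q_{n+1}[0,z))$. Since $q_{n+1}$ is uniformized and $z$ is in (or below the max of) its support, I claim $q_{n+1}[0,z) = z$ for $z$ in $\supp(q_{n+1})$; for $z$ not in $\supp(q_{n+1})$ but in $\supp(p_{n+1})$ pulled back, one handles it via the right-continuity / the way atoms like $(1-L_{n+1})\delta_{L_{n+1}}$ sit. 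Then $p_{n+1}(\fringe{\phi_n(z)}{\T_{n+1}}) = a_n(1-z)$, and $d(0,\phi_n(z)) + p_{n+1}(\fringe{\phi_n(z)}{\T_{n+1}}) = 1 + (z-1)a_n + (1-z)a_n = 1$, as required. Branch points of $\T_{n+1}$ not already covered can only arise on the new segment if the old tree branched there, but the new segment attaches only at $x_n$, so no genuinely new branch points appear off $x_n$; thus all branch points fall under Cases 1–2.

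\textbf{From Lemma \ref{lem:bead_crush_IP_0} to Proposition \ref{prop:bead_crush_IP}.} Finally, pass to the limit. Spanning for $\T$: a leaf of $\T = \cl(\bigcup_n \T_n)$ is either a leaf of some $\T_n$ (hence in $\supp(p_n) \subseteq \supp(p)$, since $\pi_n(p) = p_n$ forces $\supp(p_n) \subseteq \cl(\pi_n(\supp(p)))$ — I'd phrase this via $p_n$ being the projection) or a limit of points in $\bigcup_n \T_n$; in the latter case one shows it is a limit of points of $\bigcup_n \supp(p_n)$, hence in $\supp(p)$. Spacing for $\T$: for $x$ a branch point or in $\supp(p)$, approximate by $x^{(n)} \in \T_n$ with $x^{(n)} \to x$; continuity of $d(0,\cdot)$ and of the fringe-mass functional along this approximation (using that fringe subtrees are nested and $p_n \to p$ weakly, plus the $\ell_1$-embedding structure) yields $d(0,x) + p(\fringe{x}{\T}) = \lim (d(0,x^{(n)}) + p_n(\fringe{x^{(n)}}{\T_n})) = 1$.

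\textbf{Main obstacle.} The delicate point is Case 3 of Spacing, and more precisely the bookkeeping of which $z$ actually need to be checked — i.e.\ matching "uniformized" (a statement about $q_{n+1}[0,z) = z$ on $\supp(q_{n+1})$, with the compensating atom at $L_{n+1}$) against the requirement in the Spacing property (a statement about branch points and support points of the grafted measure $a_n\phi_n(q_{n+1})$). One must be careful that the pushforward under the affine map $\phi_n$ interacts correctly with the left-continuous "$[0,z)$" convention and with the forced atom $(1-L_{n+1})\delta_{L_{n+1}}$ when $L_{n+1} < 1$; this is exactly the content of Lemma \ref{lem:1D_IPT} applied to the segment $[[x_n,\phi_n(L_{n+1})]]_\ell$, and I expect the cleanest write-up invokes that lemma rather than redoing the one-dimensional computation. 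The limiting step is routine once one observes that all the relevant functionals are continuous under the $\ell_1$-embedding with projections $\pi_n$.
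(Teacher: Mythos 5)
Your proof is correct and follows essentially the same route as the paper's: induction on $n$, the observation that fringe masses at points of $\T_n$ are unchanged by the grafting (the paper's \eqref{eq:bead_crushing_fringe_proj_1}, your Cases 1--2), and verification of Spacing on the new segment by combining the uniformized property of $q_{n+1}$ with the inductive Spacing at $x_n$ (your Case 3 is exactly the paper's displayed computation). The additional material on passing to the limit belongs to the proof of Proposition \ref{prop:bead_crush_IP}, which the paper treats separately, but for the lemma itself your argument matches.
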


\begin{proof}
 It is easily seen that these trees possess the Spanning property, so we need only check the Spacing property. This holds by construction for $n=0$. Assume for induction that it holds from some $n\ge0$. The reader may check that for $y\in\T_n$, we get
 \begin{equation}\label{eq:bead_crushing_fringe_proj_1}
  p_{n+1}(\fringe{y}{\T_{n+1}}) = p_n(\fringe{y}{\T_n}),
 \end{equation}
 regardless of the position of $y$ relative to the point $x_n$ of insertion of the new branch. %, i.e.\ regardless of whether $F_{x_{n}}(\T_n) \cap F_y(\T_n)$ equals $F_{x_n}(\T_n)$, $F_y(\T_n)$, or $\varnothing$. 
 Thus, $(\T_{n+1},\ell_1,0,p_{n+1})$ satisfies \eqref{eq:IP_tree_spacing} at all branch points of $\T_n$ and all points in the closed support of $p_n$. It remains to check \eqref{eq:IP_tree_spacing} at points $y\in\T_{n+1}\setminus\T_n$ in the closed support of $p_{n+1}$. By definition of $p_{n+1}$, each such $y$ equals $\phi_n(z)$ for some $z$ in the closed support of $q_{n+1}$. Thus,
 \begin{equation*}
 \begin{split}
  p_{n+1}(\fringe{y}{\T_{n+1}}) &= p_{n+1}\big(\fringe{\phi_n(z)}{\T_{n+1}}\big)
  	= a_n q_{n+1}([z,L_{n+1}]) = a_n(1 - z)\\
  	&= \big(1 - \|x_n\| - p(\fringe{x_n}{\T})\big) - a_n(z-1) = 1 - \|\phi_n(z)\|,
 \end{split}
 \end{equation*}
 where the second equality results from the definition of $p_{n+1}$, the third from the uniformized property of $q_{n+1}$ at $z$, the fourth from the Spacing property of $p_n$ at $x_n$, and the last from the definition of $\phi_n$. We conclude that $(\T_{n+1},\ell_1,0,p_{n+1})$ possesses the Spacing property, as needed for our induction.
\end{proof}

\begin{proof}[Proof of Proposition \ref{prop:bead_crush_IP}]
 %Since the $(\T_n,\ell_1)$ are all separable, so is $(\T,\ell_1)$. Since the latter is a closed subset of a complete space, it is complete as well.
 %
 %To confirm the remaining two properties, let $(t_j,\,j\ge1)$ denote a sequence of i.i.d.\ samples from $p$. By definition of $p$, for every $n\ge1$ the sequence $(\pi_n(t_j),\,j\ge1)$ is i.i.d.\ with law $p_n$.
 %
 \emph{Spacing}. By our definition of $p$ via projective consistency,
 \begin{equation}\label{eq:bead_crushing_fringe_proj}
  p_n(\fringe{y}{\T_n}) = p(\fringe{y}{\T})\qquad \text{for }n\ge0,\ y\in\T_n.
 \end{equation}
 Consider $y$ in the closed support of $p$. We will abbreviate $y_n := \pi_n(y)$. For each $n\ge1$, $y_n$ lies in the closed support of $p_n$. Therefore,
 \begin{equation}\label{eq:space_to_span}
 \begin{split}
  p\left(\fringe{y}{\T}\right) &= p\left(\bigcap\nolimits_{n\ge 1}\fringe{y_n}{\T}\right) = \lim_{n\to\infty} p(\fringe{y_n}{\T})\\
  	&= \lim_{n\to\infty}p_n(\fringe{y_n}{\T_n}) = \lim_{n\to\infty} 1-\|y_n\| = 1-\|y\|,
 \end{split}
 \end{equation}
 where the first and last equalities follow from the convergence $y_n\to y$ along the segment $[[0,y]]_{\ell}$, the second follows from the countable additivity of $p$ and the nesting $\fringe{y_n}{\T}\supseteq \fringe{y_N}{\T}$ for $n\leq N$, the third from \eqref{eq:bead_crushing_fringe_proj}, and the fourth from the Spacing property of the trees $(\T_n,p_n)$.
 
% For every $n\ge1$, $y_n$ is in the closed support of $p_n = \pi_n(p)$, so by the Spacing property of $\T_n$, $p(F_{y_n}(\T)) = p_n(F_{y_n}(\T_n)) = 1-\|y_n\|$. %If $y = y_n$ for some $n$, then this shows that \eqref{eq:IP_tree_spacing} holds at $y$.
% Since this holds for all $n\ge1$, and since the $\|y_n\|$ increase to $\|y\|$ as $n$ increases, by countable additivity,
% $$p\left(F_y(\T)\right) = p\left(\bigcap\nolimits_{n\ge 1}F_{y_n}(\T)\right) = 1-\|y\|.$$
 
 \emph{Spanning}. Let $y$ be a leaf of $\T$. As before, let $y_n := \pi_n(y)$. Then for all $n\ge1$, either $y_n$ is a leaf in $\T_n$ or it lies on an atom of $p_n$, which then arises as an attachment point for a new branch later in the construction. By the Spanning property of $(\T_n,p_n)$, $y_n$ is in the closed support of $p_n$ regardless. This condition is sufficient to apply the argument \eqref{eq:space_to_span}. In particular, if $\|y\| < 1$ then $p\{y\} = p(\fringe{y}{\T}) = 1-\|y\| > 0$, so $y$ is in the closed support of $p$. %Then for all $n\ge1$, either $y_n$ is a leaf in $\T_n$ or it lies on an atom of $p_n$, which then arises as an attachment point for a new branch later in the construction. In either case, by the Spanning property of $(\T_n,p_n)$, $y_n$ is in the closed support of $p_n$, so by the Spacing property, $p(F_{y_n}(\T)) = p_n(F_{y_n}(\T_n)) = 1-\|y_n\|$.
 
 Now, suppose $\|y\| = 1$ and fix $\epsilon>0$. We will show the $\epsilon$-ball about $y$ has positive $p$-measure. Take $N$ sufficiently large so that $\|y_N\| > 1 - \epsilon/4$ and let $z$ denote the point on $[[0,y_N]]_{\ell}$ at distance $\epsilon/4$ from $y_N$. Since $\|z\| > 1-\epsilon/2$ and no point in $\T$ lies farther than one unit from the origin,
 $$\|x-y\| \leq \|x-z\| + \|y_N-z\| + \|y - y_N\| < \frac{\epsilon}{2} + \frac{\epsilon}{4} + \frac{\epsilon}{4} \quad \text{for }x\in \fringe{z}{\T}.$$
 Moreover, by the Spanning property of $\T_N$, $p(\fringe{z}{\T}) = p_N(\fringe{z}{\T_N}) > 0$. In other words, the $\epsilon$-ball about $y$ has positive measure under $p$. 
% Now, suppose that there is no such $n$. Then for every $n\ge1$ there is some $N \ge n$ for which $y_n = y_N \neq y_{N+1}$. By construction, this means that $p_N$ has an atom at $y_N$, so by the Spacing property, $p(F_{y_n}(\T)) = p_N(F_{y_n}(\T_N)) = 1-\|y_n\| > 1-\|y\|$. Since this holds for all $n\ge1$ and since the $\|y_n\|$ increase to $\|y\|$ as $n$ increases, by countable additivity,
% $$\quad p\left(F_y(\T)\right) = p\left(\bigcap_{n\ge 1}F_{y_n}(\T)\right) = 1-\|y\|.\ \ \qed$$
 %
 %Now, for $n\ge1$, consider the distance under the first Wasserstein metric:
 %\begin{equation*}
 %\begin{split}
 % W_1(\delta_0,p_n) &:= \int\|y\|_{1}p_n(dy) = \sum_{j=1}^n\int |y_j|p_n(dy) = \sum_{j=0}^{n-1}W_1(p_j,p_{j+1}),
 %\end{split}
 %\end{equation*}
 %where we take $p_0 := \delta_0$ and let $(y_j,\,j\ge1)$ denote the coordinates of $y$. Since second leftmost expression is bounded above by 1, the sequence $(p_n,\,n\ge1)$ is Cauchy under $W_1$, so it converges weakly to a limiting probability distribution $p$.
\end{proof}

%Now that we have introduced our main approach to constructing IP trees, we briefly mention the sense in which the IP trees that result from randomized bead crushing are themselves random. I.e.\ they are measurable functions from a probability space to a certain measurable space of IP trees.

\begin{theorem}\label{thm:bead_crush_rep}
 Every IP tree can be isomorphically embedded in $\ell_1$ by the above bead-crushing construction.
\end{theorem}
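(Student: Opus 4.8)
The plan is to recover, from an abstract IP tree $(\T,d,r,p)$, a sequence of the data $\big((q_n),(x_n),(a_n)\big)$ that feeds the bead-crushing construction, and then to build an explicit isomorphism onto the resulting embedded tree in $\ell_1$. First I would fix an enumeration of the ``branching data'' of $\T$: concretely, I want to list a sequence of points $x_0 = r, x_1, x_2,\ldots$ along which branches are attached, so that the fringe subtrees $\fringe{x_n}{\T}$ carry positive $p$-mass and every branch point and every isolated leaf of $\T$ eventually appears. Since $\T = \Span(p)$ by the Spanning property, and $p$ is a probability measure, there are only countably many branch points with the property that all three of the adjacent fringe components have positive mass, and only countably many atoms; a careful bookkeeping argument (as in line-breaking constructions, cf.\ \cite{MR1207226,CuriHaas16}) shows one can enumerate attachment points $(x_n)$ exhausting the skeleton in the limit. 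The key fact I will use repeatedly is the Spacing property \eqref{eq:IP_tree_spacing}: for $x$ a branch point or in $\supp(p)$, $d(r,x) + p(\fringe{x}{\T}) = 1$, so distances along the tree are completely determined by masses of fringe subtrees — this is exactly what makes an isometric reconstruction from mass data possible.

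Next, for each step $n$, given that we have identified a point $x_n$ that is an atom of the ``remaining mass'' (the mass not yet accounted for in the partial tree $\T_n$ we have reconstructed), I would let the new branch be the segment of $\T$ hanging off $x_n$ in a chosen direction, set $a_n$ to be the total $p$-mass on that branch's fringe minus the mass of the sub-branches already enumerated, and define $q_{n+1}$ to be the uniformization (Definition \ref{def:uniformize}) of the law, rescaled by $1/a_n$, of $d(x_n,\cdot)$ under $p$ restricted and pushed forward along that branch. By Lemma \ref{lem:1D_IPT}, the Spacing property along a single branch forces this one-dimensional measure to be uniformized, so $q_{n+1}$ is a legitimate input; the atom $(1-L_{n+1})\delta_{L_{n+1}}$ demanded when $L_{n+1}<1$ matches the $p$-mass sitting at the far tip of the branch, again by Spacing. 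Running the bead-crushing construction of \eqref{eq:bead_crush} with these inputs produces an embedded IP tree $(\T',\ell_1,0,p')$, and I would build the isomorphism $\Psi\colon\T\to\T'$ branch by branch: on the $n$-th branch, $\Psi$ sends a point at tree-distance $z$ from $x_n$ to $\phi_n(z)$; by construction this is an isometry onto $\T_{n+1}'\setminus\T_n'$, it is root-preserving, and it intertwines $p$ with $p'$ on the corresponding pieces. Passing to the closure and using completeness of both spaces (as in the proof of Proposition \ref{prop:bead_crush_IP}) extends $\Psi$ to all of $\T$.

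The main obstacle I expect is the bookkeeping needed to guarantee that the enumeration $(x_n)$ really exhausts the tree, i.e.\ that $\cl(\bigcup_n \T_n) = \T$ under $\Psi$, and not merely a proper subtree. Two subtleties arise: (i) a branch of $\T$ may itself be ``crushed'' infinitely often — it may contain infinitely many branch points accumulating along it, or diffuse skeletal mass, so a single segment of $\T$ will be reconstructed only as a nested limit of infinitely many bead-crushing steps, and I must check that the limiting $\ell_1$-geometry of \eqref{eq:bead_crush} matches the original metric on that segment (here Spacing plus the computation \eqref{eq:space_to_span} does the job); and (ii) I must order the countably many attachment points so that every special point of $\T$ — every branch point, every atom, every isolated leaf — is reached, which is a diagonalization over a countable family that needs the Spanning property to rule out ``hidden'' mass at an un-enumerated leaf. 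Once exhaustion is established, uniqueness of the limiting measure via Daniell--Kolmogorov (already invoked in the construction) and the fact that $p$ is determined on $\T$ by its fringe masses (by \eqref{eq:space_to_span}) give that $\Psi$ is weight-preserving in the limit, completing the proof.
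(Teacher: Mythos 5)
Your route is genuinely different from the paper's. The paper disposes of this theorem in two lines: given an IP tree $(\T,d,r,p)$, Proposition \ref{prop:MSE_to_constr_tree} supplies a deterministic bead-crushing construction whose output is mass-structurally equivalent to $(\T,d,r,p)$ (this comes from sampling from $p$, forming the induced hierarchy, and invoking Proposition \ref{prop:its_a_bead_crush}), and Proposition \ref{prop:IP_tree_MSE} then says that two mass-structurally equivalent IP trees are isomorphic. All the analytic work is outsourced to those two propositions. You instead reverse-engineer the input data $\big((q_n),(x_n),(a_n)\big)$ directly from the tree. The core mechanisms you invoke are sound: Spacing really does determine the metric from fringe masses, and the projection of $p$ onto a spine, rescaled and uniformized, really is forced to be uniformized by the Spacing identity (the computation in the inductive step of Proposition \ref{prop:its_a_bead_crush} is essentially this, in the sampled setting). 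Your approach is self-contained and avoids hierarchies entirely, but it re-proves by hand a large part of what Propositions \ref{prop:MSE_to_constr_tree} and \ref{prop:IP_tree_MSE} already deliver.

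As written, though, the proposal leaves real gaps at exactly the delicate points. (a) \emph{Enumeration and exhaustion.} You need that an arbitrary IP tree has only countably many branch points, and you cannot get this from the theorem being proved; it follows from Spanning (each component of $\fringe{x}{\T}\setminus\{x\}$ contains a leaf, hence support points, hence has positive mass) plus a counting argument, but that argument must be given, since any branch point with a positive-mass component that is never scheduled would make $\cl(\bigcup_n\T_n)$ a proper subtree. You also need a concrete scheduling rule together with the observation that Spacing bounds $\mathrm{diam}(\fringe{x}{\T})$ by $p(\fringe{x}{\T})$, so that unexplored bushes shrink to points. (b) \emph{The per-branch map.} In \eqref{eq:bead_crush}, $\phi_n(z)$ lies at distance $p_n(\fringe{x_n}{\T_n})+(z-1)a_n$ from $x_n$, not at distance $z$, so $\Psi$ cannot send ``the point at tree-distance $z$ from $x_n$'' to $\phi_n(z)$; it must be defined through the mass coordinate $1-z = a_n^{-1}\,p(\text{bush beyond the point})$, and one must separately check that skeleton segments carrying no mass and no branch points (gaps in $\supp(p)$) receive the correct length — this uses Spacing at the two endpoints of the gap and requires verifying that those endpoints are branch points or support points. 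None of these is fatal, but each is a place where the paper's indirect argument does the work for you, and your sketch currently only names them rather than closing them.
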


We prove this in Section \ref{sec:thm_pfs}.

\subsection{Metrization and measurability of spaces of IP trees}

Since IP trees need not be compact, we cannot employ Hausdorff or Gromov-Hausdorff distance to metrize sets of such trees (see \cite{EvansStFleur} for discussion of such metrics). However, the only random IP trees that we will construct and consider are those arising from bead crushing. For such a tree $(\T,\ell_1,0,p)$, for every $x,y\in\T$, the segment $[[x,y]]_{\T}$ equals $[[x,y]]_{\ell}$. By the Spanning property, this means that $\T$ is specified by $p$:
\begin{equation}
 \T = \bigcup_{\text{leaves }x\in\T} [[0,x]]_{\T} = \bigcup_{x\in\text{support}(p)} [[0,x]]_{\ell}.
\end{equation}
Therefore, we can metrize the space of such trees with the Prokhorov metric on their weights:
\begin{align}
 &d_P\big( (\T,\ell_1,0,p)\; ,\; (\cS,\ell_1,0,q) \big)\\
 &\ \ = \inf\left\{ \epsilon > 0\colon \forall A\in \mathcal{B},\ p(A^\epsilon) + \epsilon \ge q(A) \text{ and } q(A^\epsilon) + \epsilon \ge p(A)  \right\},\notag
\end{align}
where $\mathcal{B}$ is the Borel $\sigma$-algebra on $\ell_1$ and $A^\epsilon$ denotes the set of all points within distance $\epsilon$ of some point in $A$. Then, we endow the space of IP trees that arise from bead crushing constructions with the resulting Borel $\sigma$-algebra. Similarly, we can metrize the space of isometry classes of IP trees with the Gromov-Prokhorov metric, which was introduced in \cite[Chapter $3\frac12{}_+$]{GromovBook} and studied in the setting of CRTs in \cite{GrevPfafWint09}. Under the Gromov-Prokhorov metric, the distance between two isometry classes of IP trees is the infimum, over all isomorphic embeddings of the two trees into a common space, of the Prokhorov distance between their embeddings. Again, we endow the space of such isometry classes with the resulting Borel $\sigma$-algebra.

%In light of Theorem \ref{thm:bead_crush_rep}, the space of isomorphism classes of IP trees can be metrized with a variant of a Gromov-Prokhorov metric similar to that in \cite{GrevPfafWint09}; also see \cite[Chapter $3\frac12{}_+$]{GromovBook}. We say that the distance between two such isomorphism classes $\mathscr{S}$ and $\mathscr{T}$ is the infimum of $d_P\big( (\T,\ell_1,0,p)\; ,\; (\cS,\ell_1,0,q) \big)$, over all pairs of isomorphic embeddings via bead-crushing, $(\T,\ell_1,0,p)\in\mathscr{T}$ and $(\cS,\ell_1,0,q)\in\mathscr{S}$. Then, we endow the space of isomorphism classes of IP trees with the resulting Borel $\sigma$-algebra.

\subsection{Example IP trees, strings of beads, the Brownian IP tree}\label{sec:string_of_beads}

\begin{definition}
 The \emph{simple bead-crushing construction of IP trees} is a randomization of the construction in Section \ref{sec:bead_crush} in which: (i) the measures $(q_n,\,n\ge1)$ are i.i.d.\ picks from some law on uniformized probability measures, with not all $q_n=\delta_0$; (ii) at each step, $m_n\delta_{x_n}$ is a size-biased pick from among the atoms of $p_n$; and (iii) at each step, $a_n = m_n$.
\end{definition}

This variant of the construction always yields a random IP tree with only binary branch points and a purely diffuse weight measure. Gnedin introduced uniformized measures in the context of the following bijection.

\begin{lemma}[Gnedin \cite{MR1457625}, Section 3]\label{lem:uniformization}
 The map from a uniformized probability measure to the relative complement of its support in $[0,1)$ is a bijection onto the set of open subsets of $(0,1)$. Its inverse can be described as follows. Consider $U = \bigcup_i(a_i,b_i)$, where this is a disjoint union. Then $U$ is the relative complement in $[0,1)$ of the support of the uniformized probability measure $q = q^a+q^d$, where $q^a = \sum_i (b_i-a_i)\delta_{a_i}$ and $q^d$ is the restriction of Lebesgue measure to $[0,1)\setminus U$.
\end{lemma}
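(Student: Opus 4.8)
The plan is to show that the two maps named in the statement are mutually inverse bijections; throughout, $q$ denotes a uniformized probability measure and $U$ an open subset of $(0,1)$. I would first record the facts making the forward map $q\mapsto[0,1)\setminus\supp(q)$ well defined with the stated codomain. The support $\supp(q)$ is closed, so its relative complement in $[0,1)$ is relatively open; and writing $c:=\min\supp(q)$, $L:=\max\supp(q)$, the uniformization identity gives $c=q[0,c)=0$ (since $[0,c)$ misses $\supp(q)$) and $L=q[0,L)\le 1$, so $0\in\supp(q)\subseteq[0,1]$ and $[0,1)\setminus\supp(q)$ is a genuine open subset of $(0,1)$. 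One also checks $q[0,1)=1$ for every uniformized probability measure, a fact used below.

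For the construction $U\mapsto q=q^a+q^d$ I would decompose $U=\bigsqcup_i(a_i,b_i)$ into connected components. Its total mass is $\sum_i(b_i-a_i)+\Leb([0,1)\setminus U)=\Leb(U)+(1-\Leb(U))=1$, so $q$ is a probability measure, and a short computation of the left-continuous distribution function gives
\[
 q[0,x) \;=\; x + \sum_{i\,:\,a_i<x<b_i}(b_i-x) \;=\;
 \begin{cases} b_i, & x\in(a_i,b_i)\text{ for some }i,\\ x, & x\in[0,1]\setminus U,\end{cases}
\]
disjointness of the components making at most one index contribute.

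The main step is to identify the support of this $q$ and check $[0,1)\setminus\supp(q)=U$. If $x\in U$, the component $(a_i,b_i)$ containing it is a neighbourhood carrying no diffuse mass and, by disjointness, no atom $a_j$, so $q(a_i,b_i)=0$ and $x\notin\supp(q)$. Conversely let $x\in[0,1)\setminus U$: if $x=a_i$ for some $i$ then $x$ is an atom of positive mass, hence in $\supp(q)$; otherwise, for each small $\delta>0$ the interval $[x,x+\delta)$ either meets $[0,1)\setminus U$ in positive Lebesgue measure or is covered up to a null set by components of $U$, in which case it must contain the left endpoint $a_i\in(x,x+\delta)$ of some component, again an atom---so every neighbourhood of $x$ has positive $q$-mass. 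This gives $[0,1)\setminus\supp(q)=U$, so the round trip $U\mapsto q\mapsto[0,1)\setminus\supp(q)$ is the identity; and restricting the displayed distribution function to $x\in\supp(q)$ shows $q[0,x)=x$ there, so $q$ is genuinely uniformized. Hence the forward map is onto.

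For injectivity (equivalently, that the other round trip is the identity) I would use the uniformization hypothesis directly. Given $q$, set $U=[0,1)\setminus\supp(q)$ with components $(a_i,b_i)$; each $a_i$ and each $b_i<1$ lie in $\supp(q)\cap[0,1)$, so $q[0,a_i)=a_i$, $q[0,b_i)=b_i$ (and $q[0,1)=1$ always), whence $q\{a_i\}=b_i-a_i$ because $(a_i,b_i)$ is $q$-null. Then $q[0,x)=q[0,a_i)+q\{a_i\}=b_i$ for $x\in(a_i,b_i)$, and $q[0,x)=x$ for $x\in[0,1]\setminus U$; matching these against the displayed distribution function of $q^a+q^d$ gives $q=q^a+q^d$. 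The step needing real care is the support identification in the third paragraph---especially points of $[0,1)\setminus U$ that accumulate the gaps (where $q^d$ vanishes locally but the atoms $a_i$ pile up) and the endpoint $1$, where $\supp(q)$ may contain $1$ without affecting the relative complement in $[0,1)$. Everything else is routine measure bookkeeping.
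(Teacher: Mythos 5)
The paper does not prove this lemma at all --- it is stated as a known result, cited to Gnedin \cite{MR1457625}, so there is no in-paper argument to compare against. Your self-contained proof is correct: the computation of the left-continuous distribution function of $q^a+q^d$, the support identification (including the delicate case of points of $[0,1)\setminus U$ where the gaps accumulate, which you resolve correctly by locating a left endpoint $a_i\in(x,x+\delta)$ of the component containing any point of $U\cap(x,x+\delta)$), and the matching of distribution functions for injectivity all go through, and you rightly isolate the only two genuinely fussy points (accumulation of gaps, and the irrelevance of whether $1\in\supp(q)$ to the relative complement in $[0,1)$). This supplies a complete proof of a statement the paper leaves to the literature.
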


In light of this lemma and the bead-crushing construction, we can construct interesting IP trees by looking at interesting open sets.

\begin{example}[Fat Cantor IP trees]\label{eg:fat_Cantor}
 Let $A_0 := [0,1]$. Let $A_1 := A_0\setminus (3/8,5/8)$. We carry on recursively, as follows. For $n\geq 1$, $A_n$ comprises $2^n$ disjoint closed intervals of the same length. We form $A_{n+1}$ by removing an open interval of length $4^{-n-1}$ from the middle of each component of $A_n$. This sequence decreases to a \emph{fat Cantor set} $A_{\infty} = \bigcap_{n\ge1}A_n$, also called a Smith-Volterra-Cantor set, with Lebesgue measure $1/2$; see \cite[p.\ 89]{MR1996162}.
 
 The fat Cantor set is closed. Let $q$ denote the unique uniformized probability measure supported on $A_{\infty}$. This equals the restriction of Lebesgue measure to $A_{\infty}$, plus a sum of atoms at the left end of each interval removed in the construction, with mass equal to the length of the removed interval. By Lemma \ref{lem:1D_IPT}, $([0,1],d,0,q)$ is an IP tree, where $d$ is Euclidean distance. If we carry out the simple bead-crushing construction with a sequence of copies $q_n = q$, then we get a binary branching IP tree with length measure interspersed among the branch points in such a way that the support of the measure does not include any non-trivial segments. See Figure \ref{fig:IPT_egs}.
\end{example}

%\begin{example}[Non-compact IP tree]\label{eg:non_compact}
% Define $\T := \bigcup_{j\geq 1}\Be_j[0,1] \subset \ell_1$ and let $p$ be the sum of length measures on the intervals $\Be_j[1-\frac{1}{2^j},1]$. Then $(\T,\ell_1,0,p)$ is a non-compact IP tree.
%\end{example}

%We now digress from the pursuit of our main theorems to mention connections between strings of beads, interval partitions, and IP trees.

Let $(\T,d,r,p)$ be a rooted, weighted real tree, and fix $x\in\T$. Consider the decomposition of $\T$ into the path $[[r,x]]$, called a \emph{spine}, and the collection of subtrees, called \emph{bushes}, branching out from the branch points along the spine, with perhaps a final bush rooted at $x$, if $x$ is not a leaf. This decomposition has been studied in \cite{MR1641670,MR2546748,PitmWink09}. The bushes are totally ordered by increasing distance from the root. We may project $p$ down onto the spine, replacing the mass distribution over each bush with an atom at the root of the bush. The resulting measure is called a \emph{string of beads}, with the spine being the string and the atoms of the projection of $p$ comprising the beads; see Figure \ref{fig:string_discr}. This approach was introduced in \cite{PitmWink09}.

\begin{figure}
 \centering
 \includegraphics[scale=.65]{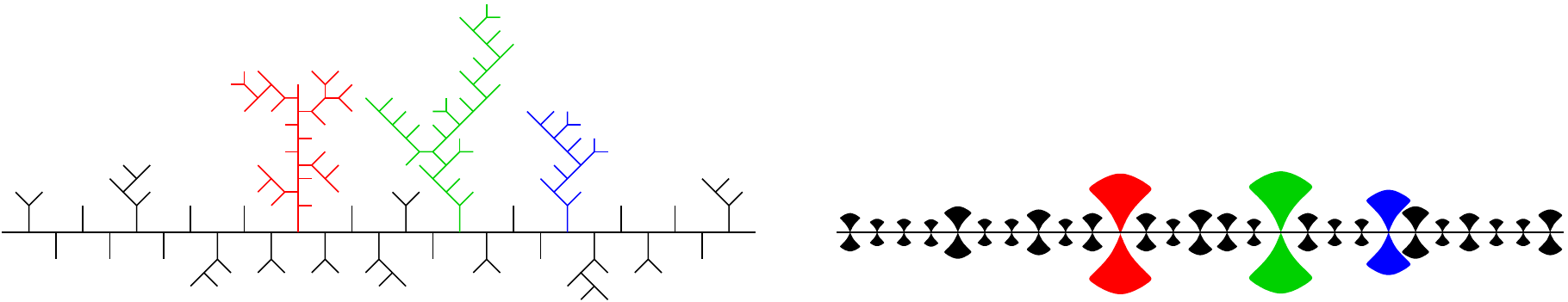}
 \caption{A string of beads in a discrete tree.\label{fig:string_discr}}
\end{figure}

\begin{example}\label{eg:aa_string}
 The \emph{two-parameter Poisson-Dirichlet distributions} \cite{PitmYor97}, denoted by $\texttt{PoiDir}(\alpha,\theta)$ with $\alpha\in [0,1)$ and $\theta>-\alpha$, are probability distributions on the Kingman simplex: the set of non-increasing sequences of real numbers that sum to 1. These distributions, introduced in \cite{Kingman75,PermPitmYor92,PitmYor97}, arise in many mathematical settings and applications. 
 %These distributions were introduced in \cite{PitmYor97} and have gained prominence for applications in machine learning clustering algorithms and in biostatistics \cite{CSP}.
 Fix $\alpha\in (0,1)$. Let $(U_i,\,i\ge1)$ be i.i.d.\ \texttt{Uniform}$[0,1]$, and let $(P_i,\,i\ge 1)$ be independent of this sequence with $\texttt{PoiDir}(\alpha,\alpha)$ distribution. We define
 \begin{equation}\label{eq:aa_string}
  L := \lim_{n\to\infty}n(P_n)^{\alpha}\Gamma(1-\alpha) \quad \text{and} \quad \mu := \sum_{i\ge 1} P_i\delta_{U_iL}.
 \end{equation}
 The quantity $L$, called the \emph{$\alpha$-diversity} or sometimes the \emph{local time}, is known to be a.s.\ positive and finite, with a known probability distribution; see \cite[eqn.\ 83]{Pitman03} or \cite[eqn.\ 6]{PitmWink09}. The measure $\mu$ is called an \emph{$(\alpha,\alpha)$-string of beads}.
\end{example}
 
 In Section \ref{sec:ATCRT_recovery}, we describe the bead crushing construction of \cite{PitmWink09}, which differs from that in Section \ref{sec:bead_crush}. In particular, plugging i.i.d.\ $\big(\frac12,\frac12\big)$-strings of beads into the former construction yields a Brownian CRT.
 %we give a construction of $(\alpha,\theta)$-strings of beads and present another bead-crushing construction that gives rise to $(\alpha,\theta)$-CRTs, based on i.i.d.\ $(\alpha,\theta)$-strings. In particular, the $\big(\frac12,\frac12\big)$-CRT is the Brownian CRT. We will show that these CRTs are mass-structurally equivalent to the following random IP trees.
 
%\begin{definition}
% For $m>0$, a \emph{generalized interval partition} of $[0,m]$ is a set of disjoint open subintervals of $[0,m]$. These subintervals are the \emph{blocks} of the partition. If these open intervals cover $[0,m]$ up to a Lebesgue null set then this is a \emph{(proper) interval partition}.
%\end{definition}

\begin{definition}\label{def:AT_IP_tree}
 Fix $\alpha\in (0,1)$. Let $(q_n,\,n\ge 1)$ be a sequence of i.i.d.\ random probability measures on $[0,1]$, with each distributed as the uniformization of an $\big(\alpha,\alpha\big)$-string of beads. Let $(\T,\ell_1,0,p)$ denote the IP tree resulting from a bead-crushing construction from this sequence, as in Section \ref{sec:bead_crush}, with each $x_n$ being the location of a size-biased random atom of $p_n$ and each $a_n = m_n$. We call the resulting IP tree an \emph{$(\alpha,\alpha)$-IP tree}. In the case $\alpha=\frac12$, we call it a \emph{Brownian IP tree}. See Figure \ref{fig:IPT_egs}.
\end{definition}

This construction can be carried out with the full two-parameter family of $(\alpha,\theta)$-strings, with $\theta\ge 0$, introduced in \cite{PitmWink09}. We discuss the connection between the Brownian CRT and the Brownian IP tree in Section \ref{sec:ATCRT_recovery}.

\begin{figure}
 \centering
  (a) \raisebox{-.5\height}{\includegraphics[width = 2.0in,height=1.1in]{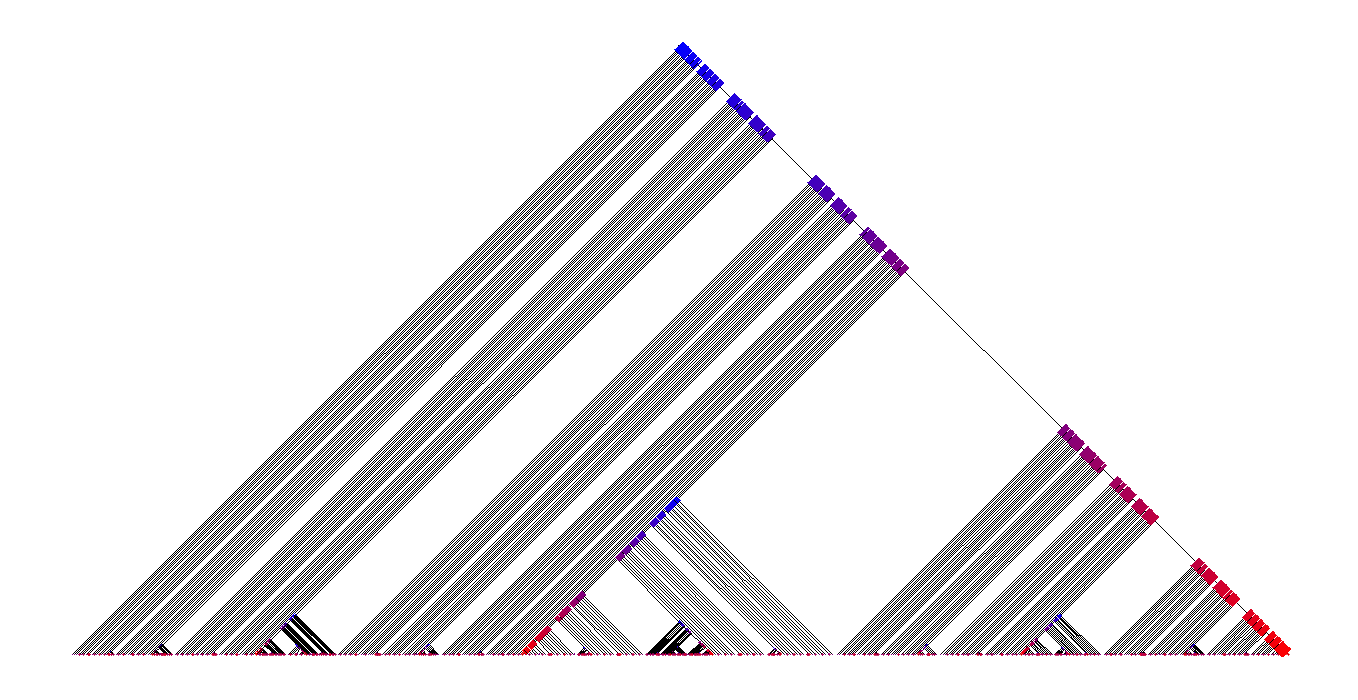}} \quad 
  (b) \raisebox{-.5\height}{\includegraphics[width = 2.0in,height=1.1in]{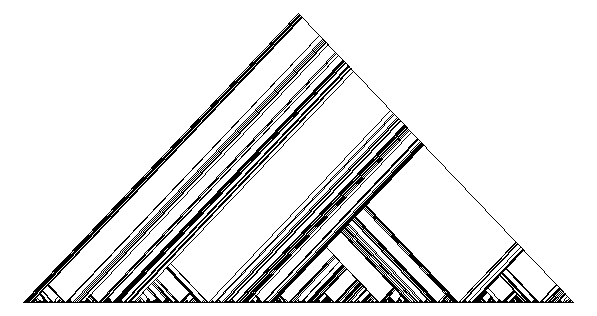}}\\
  (c) \raisebox{-.5\height}{\includegraphics[width = 2.0in,height=1.1in]{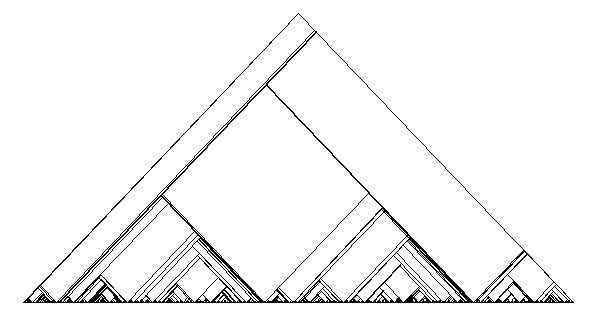}} \quad 
%  (d) \raisebox{-.5\height}{\includegraphics[width = 2.0in,height=1.1in]{Fig_ATIPT(p2,p8).png}}\\
%  (e) \raisebox{-.5\height}{\includegraphics[width = 2.0in,height=1.1in]{Fig_ATIPT(p8,p2).png}} \quad 
  (d) \raisebox{-.5\height}{\includegraphics[width = 2.0in,height=1.1in]{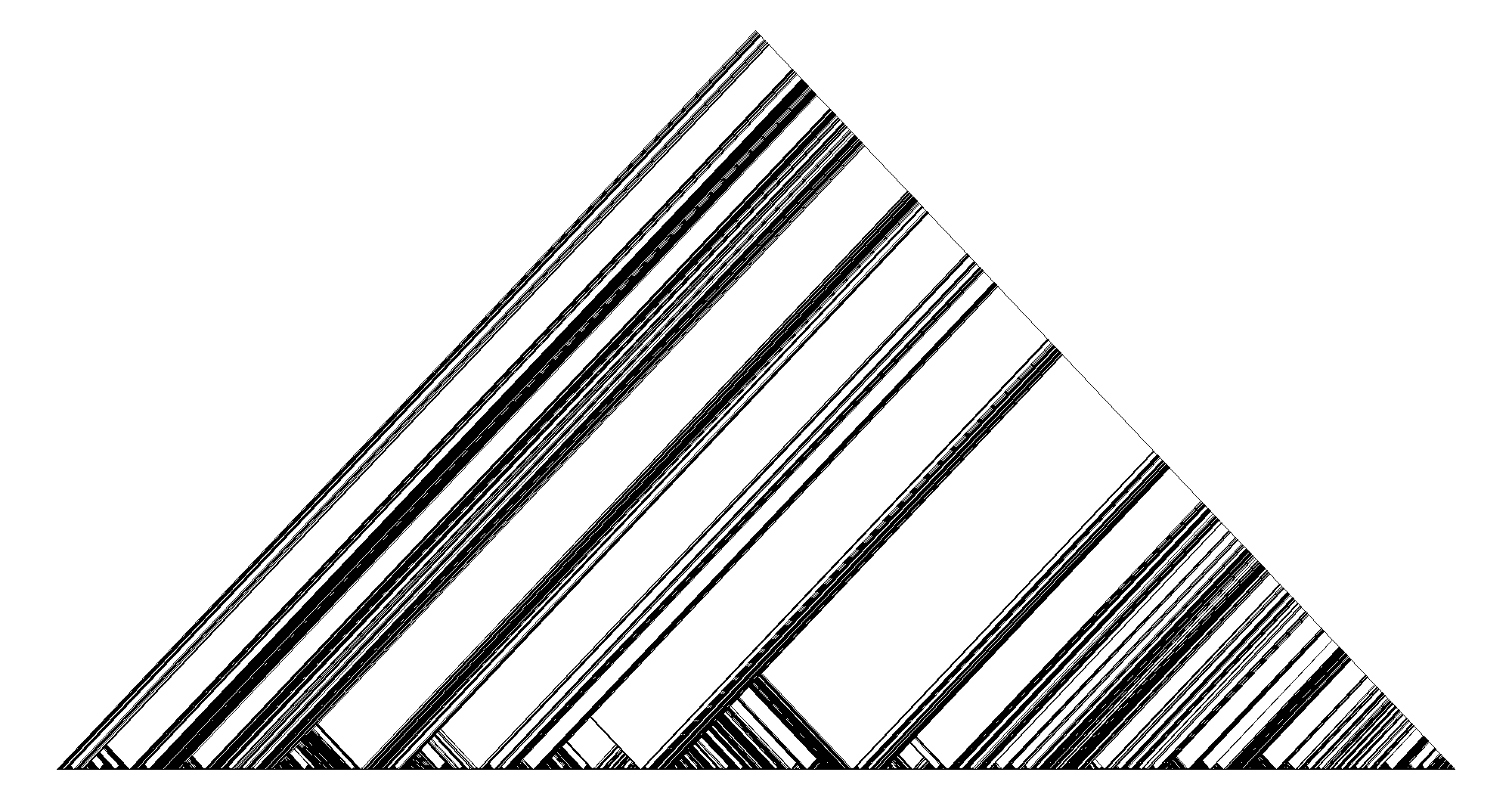}}
 \caption{Simulated IP trees, represented as in Figure \ref{fig:bead_crush}. It looks as though leaves from different branches are touching; this is not intended. (a) Fat Cantor IP tree. (b) Brownian IP tree. (c) $(.2,.2)$-IP tree. %(d) $(.2,.8)$-IP tree. (e) $(.8,.2)$-IP tree. 
 (d) $(.8,5)$-IP tree.\label{fig:IPT_egs}}
\end{figure}

\section{IP tree representation of an exchangeable hierarchy}\label{sec:tree_construction}

We recall some definitions and results from \cite{FormHaulPitm17}.

\begin{definition}\label{def:MRCA}
 If $\cH$ is a hierarchy on a finite set $S$, then for $x,y\in S$, the \emph{most recent common ancestor (MRCA)} of $x$ and $y$ is
 \begin{equation} \label{eq:MRCA_def}
  (x \wedge y) := \bigcap_{G\in \cH\colon x,y \in G} G.
 \end{equation}
 If $(\cH_n,\,n\ge1)$ is hierarchy on $\mathbb{N}$, then we define the MRCA of $i$ and $j$ in this hierarchy to be
 \begin{equation}
  ( i \wedge j) := \bigcup_{n \geq \max\{i,j\}}(i \wedge j)_n,
 \end{equation}
 where $(i \wedge j)_n$ denotes the MRCA of $i$ and $j$ in $\cH_n$.
\end{definition}

MRCAs in hierarchies on $\BN$ are projectively consistent\,\cite[Proposition 1]{FormHaulPitm17}:
\begin{equation}\label{eq:MRCA_projection}
 (i\wedge j)_n = (i\wedge j)_N\cap [n] = (i\wedge j)\cap [n]\qquad \text{for }i,j\le n\le N.
\end{equation}

When constructing a tree representation of a hierarchy, we find it convenient to work with a hierarchy on $\BZ$. Let $(\cH'_n, n \geq 1)$ be an exchangeable hierarchy on $\mathbb{N}$ and let $b: \BN\to \BZ$ denote the bijection that sends odd numbers to sequential non-positive numbers and evens to sequential positive numbers. For $n \geq 1$ set 
\begin{equation}\label{eq:hier_on_Z}
 \cH_n := \left\{\left\{b(k): k \in A\right\}\colon A \in \cH'_{2n+1} \right\}.
\end{equation}
Then $\cH_n$ is a hierarchy on $[\pm n] := \{-n, \ldots, 0, \ldots, n\}$ and $\Restrict{\cH_{n+1}}{[\pm n]}  = \cH_{n}$ for every $n \geq 1$. Definition \ref{def:MRCA} extends to this context without modification.

\begin{proposition}[\cite{MR1457625} Theorem 11, \cite{MR1104078} Theorem 5, \cite{FormHaulPitm17} Proposition 2]\label{prop:spinal}
 Let $(\cH_n,\,n\ge 1)$ be an exchangeable hierarchy on $\BZ$.
 \begin{enumerate}[label=(\roman*), ref=(\roman*)]
  \item For $i,j\in \BZ$, the following limit exists almost surely:\label{item:spinal:def}
  \begin{equation}\label{eq:spinal_def}
   X^i_j := 1 - \lim_{n \to \infty} \frac{\#((i\wedge j)\cap[\pm n])}{2n}.
  \end{equation}
  \item %For finite permutations $\sigma$ of $\BN$ (i.e.\ bijections with finitely many non-fixed points),\label{item:spinal:exch}
   For bijections $\sigma\colon\BZ\to\BZ$ with finitely many non-fixed points,\label{item:spinal:exch}
   \begin{equation}\label{eq:spinal_exch}
    \left(X^i_j;\ i,j\in\BZ,\,i\neq j\right) \stackrel{d}{=} \left(X^{\sigma(i)}_{\sigma(j)};\ i,j\in\BZ,\,i\neq j\right).
   \end{equation}
   In particular, for $i\in\BZ$, the family $(X^i_j,\,j\in\BZ\setminus\{i\})$ is exchangeable.
  \item For $i,j,k\in\BN$, the following events are almost surely equal:\label{item:spinal:equiv}
   \begin{equation}\label{eq:spinal_equiv}
    \{X^{i}_j \leq X^{i}_k\} = \{(i \wedge k) \subseteq (i \wedge j)\} = \{k \in (i\wedge j)\}.
   \end{equation}
  \end{enumerate}
\end{proposition}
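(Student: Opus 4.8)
The plan is to reduce all three claims to de Finetti-type / law-of-large-numbers arguments applied to the exchangeable structure of the hierarchy. For part \ref{item:spinal:def}, fix $i,j\in\BZ$ and consider, for each $k\in\BZ\setminus\{i,j\}$, the indicator $\xi_k := \mathbf{1}\{k\in(i\wedge j)\}$. By the projective consistency of MRCAs \eqref{eq:MRCA_projection}, for $n\ge\max(|i|,|j|)$ we have $\#((i\wedge j)\cap[\pm n]) = 2 + \sum_{k\in[\pm n]\setminus\{i,j\}}\xi_k$, so the existence of the limit in \eqref{eq:spinal_def} is equivalent to convergence of the Cesàro averages of $(\xi_k)$. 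Now I would argue that, conditionally on $\cH$ restricted appropriately, the family $(\xi_k,\,k\in\BZ\setminus\{i,j\})$ is exchangeable: a permutation of $\BZ$ fixing $i$ and $j$ permutes the sets $(i\wedge j)_n$ consistently (since the MRCA is defined purely in terms of the hierarchy and $i,j$), so the joint law of $(\xi_k)$ is invariant under finite permutations fixing $i,j$. An exchangeable $\{0,1\}$-valued sequence indexed by $\BZ$ has almost surely convergent Cesàro averages by the (two-sided) de Finetti / ergodic theorem, which gives \ref{item:spinal:def}.

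For part \ref{item:spinal:exch}, the distributional identity \eqref{eq:spinal_exch} should follow by transferring the exchangeability of $(\cH_n)$ through the (a.s.-defined) map that produces the $X^i_j$. Concretely, for a bijection $\sigma\colon\BZ\to\BZ$ with finitely many non-fixed points, I would pick $n$ large enough that all non-fixed points of $\sigma$ lie in $[\pm n]$; then $\sigma$ acts on $\cH_n$ by relabeling, and by exchangeability $\sigma(\cH_n)\stackrel{d}{=}\cH_n$, compatibly in $n$. Since $(i\wedge j)$ is a deterministic functional of the hierarchy and $\sigma$ maps $(i\wedge j)$ for $\cH$ to $(\sigma(i)\wedge\sigma(j))$ for $\sigma(\cH)$ while distorting the counting density $\#(\,\cdot\,\cap[\pm n])/2n$ only by an $O(1/n)$ amount (finitely many points moved), the limits satisfy $X^i_j(\sigma(\cH)) = X^{\sigma^{-1}(i)}_{\sigma^{-1}(j)}(\cH)$ (up to a reindexing one sorts out carefully), and \eqref{eq:spinal_exch} follows. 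The specialization to exchangeability of $(X^i_j,\,j\neq i)$ for fixed $i$ is immediate.

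For part \ref{item:spinal:equiv}, I would work with the three events directly. The equality $\{(i\wedge k)\subseteq(i\wedge j)\} = \{k\in(i\wedge j)\}$ is essentially combinatorial: if $k\in(i\wedge j)$ then $(i\wedge j)$ is a set in the hierarchy containing both $i$ and $k$, hence contains the smallest such, namely $(i\wedge k)$; conversely if $(i\wedge k)\subseteq(i\wedge j)$ then since $k\in(i\wedge k)$ we get $k\in(i\wedge j)$ — and this holds deterministically, not just a.s., working in each $\cH_n$ and passing to the union. For $\{X^i_j\le X^i_k\} = \{(i\wedge k)\subseteq(i\wedge j)\}$ (a.s.): on the event $(i\wedge k)\subseteq(i\wedge j)$ we have $\#((i\wedge k)\cap[\pm n])\le\#((i\wedge j)\cap[\pm n])$ for all large $n$, so dividing by $2n$ and taking limits gives $X^i_j\le X^i_k$; for the reverse inclusion I would use the nesting property of hierarchies — $(i\wedge k)$ and $(i\wedge j)$ both contain $i$, so one contains the other — together with the observation that if instead $(i\wedge j)\subsetneq(i\wedge k)$ strictly, then by part \ref{item:spinal:def} applied to the "annulus" $(i\wedge k)\setminus(i\wedge j)$ (which is $(i\wedge k)$ minus a hierarchy-set, again an exchangeably-structured index set) its density is a.s. either $0$ or positive, and a separate argument (or appeal to the cited references \cite{MR1457625,MR1104078,FormHaulPitm17}) shows that when $j\notin(i\wedge k)$ this density is a.s. positive, forcing $X^i_j > X^i_k$ on that event.

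The main obstacle I expect is the last point: showing that a \emph{strict} nesting $(i\wedge j)\subsetneq(i\wedge k)$ forces a \emph{strict} inequality $X^i_j > X^i_k$ almost surely — i.e. that the relative size of the block $(i\wedge k)$ that is "split off" below the MRCA genuinely carries positive mass whenever it is a proper sub-block. This is where one must rule out the degenerate possibility that a proper refinement step in the hierarchy has asymptotic density zero in a way that is correlated with $j$ lying outside it; handling this cleanly is the delicate part, and it is presumably the reason the proposition is quoted from \cite{FormHaulPitm17} rather than reproved here. Everything else (projective consistency, the combinatorial identity, the $\le$ direction, and the de Finetti input) is routine.
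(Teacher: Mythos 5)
First, a point of comparison: the paper does not prove Proposition \ref{prop:spinal} at all --- it is imported with attribution from \cite{MR1457625,MR1104078,FormHaulPitm17} --- so there is no in-paper argument to measure yours against. On its own terms, your treatment of parts \ref{item:spinal:def} and \ref{item:spinal:exch}, and of the purely combinatorial identity $\{(i\wedge k)\subseteq(i\wedge j)\}=\{k\in(i\wedge j)\}$, is correct and is the standard route: exchangeability of the indicators $\mathbf{1}\{m\in(i\wedge j)\}$ under finite permutations fixing $i,j$, de Finetti plus the conditional law of large numbers for the Ces\`aro limit over the symmetric windows $[\pm n]$, and equivariance of the MRCA functional together with the $O(1/n)$ counting perturbation for \eqref{eq:spinal_exch}. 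The one-sided inclusion $\{k\in(i\wedge j)\}\subseteq\{X^i_j\le X^i_k\}$ by monotone counting is also fine.

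The genuine gap is the one you flag yourself: on $\{k\notin(i\wedge j)\}$ one must exclude $X^i_j=X^i_k$, i.e.\ exclude that the nonempty set $D:=(i\wedge k)\setminus(i\wedge j)$ has asymptotic density zero, and your proposal leaves this to ``a separate argument or the cited references.'' The claim you want is true, but it is not a positivity-of-an-annulus fact; it follows from a disjointness argument. The family $\big(\mathbf{1}\{m\in D\},\ m\in\BZ\setminus\{i,j,k\}\big)$ is exchangeable, so by de Finetti the density of $D$ equals the conditional success probability $\theta$, and on $\{\theta=0\}$ all these indicators vanish a.s.; since $i,j\in(i\wedge j)$, this forces $D=\{k\}$, i.e.\ $(i\wedge k)=(i\wedge j)\cup\{k\}$. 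Now set $F_m:=\{m\notin(i\wedge j),\ (i\wedge m)=(i\wedge j)\cup\{m\}\}$. For distinct $m,m'$ the events $F_m$ and $F_{m'}$ are disjoint: both $(i\wedge j)\cup\{m\}$ and $(i\wedge j)\cup\{m'\}$ would be blocks containing $i$, hence nested by the hierarchy property (applied in each $\cH_n$ and passed to the union via \eqref{eq:MRCA_projection}), yet neither contains the other. The $F_m$ are equiprobable by exchangeability, and infinitely many pairwise disjoint equiprobable events are null; hence $\Pr\big(k\notin(i\wedge j),\,X^i_j=X^i_k\big)=0$, which closes part \ref{item:spinal:equiv}. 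Note also that appealing to an e.i.g.\ or paintbox representation here would be circular, since in \cite{FormHaulPitm17} that representation is built from this proposition; and your side condition ``when $j\notin(i\wedge k)$'' is misstated --- on the relevant event one has $j\in(i\wedge j)\subsetneq(i\wedge k)$, so $j$ always lies in $(i\wedge k)$ there.
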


%Let $(\cH'_n, n \geq 1)$ be an exchangeable hierarchy on $\mathbb{N}$. We find it convenient to map this to a hierarchy on $\BZ$. Let $b: \BN\to \BZ$ denote the bijection that sends odd numbers to sequential non-positive numbers and evens to sequential positive numbers. For $n \geq 1$ set 
%\begin{equation}\label{eq:hier_on_Z}
% \cH_n := \left\{\left\{b(k): k \in A\right\}\colon A \in \cH'_{2n+1} \right\}.
%\end{equation}
%Then $\cH_n$ is a hierarchy on $[\pm n] := \{-n, \ldots, 0, \ldots, n\}$ and $\Restrict{\cH_{n+1}}{[\pm n]}  = \cH_{n}$ for every $n \geq 1$.  We will need the notion of MRCA in $(\cH_n,\,n\ge 1)$; happily, Definition \ref{def:MRCA} makes sense in the present context with obvious minimal changes, e.g.\ reading $[\pm n]$ for $[n]$. Similarly, in this setting \eqref{eq:spinal_def} becomes
%\begin{equation}\label{eq:spinal_def_new}
% X^i_j := 1 - \lim_{n \to \infty}\frac{\#(i \wedge j)_{\cH_n}}{2n+1},   \qquad i,j \in \mathbb{Z}.
%\end{equation}
%Proposition \ref{prop:spinal} remains true here with obvious minimal changes. 

Recall the notation of Definitions \ref{def:l1} and \ref{def:special_path} for a standard basis $(\Be_n,\,n\ge1)$, projection maps $(\pi_n,\,n\ge1)$, and segments $[[0,x]]_{\ell}$ in $\ell_1$. We adopt the convention that for $k<0$, $[k] := \{k,k+1,\ldots,-1\}$.

\begin{definition}\label{def:constr_samples}
 For all $j \in \BZ$, set $t^{0}_j = 0$ and for every $k\le 0$,
 \begin{equation}\label{t-defn}
 \begin{split}
  t^{k-1}_j &:= t^k_j + \Be_{|k-1|}\left(X^{k-1}_j - \left\|t^k_j\right\|\right)_+\quad \text{for }j\in\BZ\setminus [k-1],\\
  \T_k &:= \cl\left(\bigcup\nolimits_{j\geq 1} \left[\!\left[0, t^k_j\!\right]\right]_{\ell} \right),
 \end{split}
 \end{equation}
 where $(a)_+ := \max\{a,0\}$. We treat $0$ as the root of each of the trees.
\end{definition}

Definition \ref{def:constr_samples} can be described as follows: to define the samples $(t^{k-1}_j,\,j\in \BZ\setminus [k-1])$ for some $k\leq -1$, we select a subset of the $(t^k_j,\,\in \BZ\setminus [k-1])$, possibly empty, and push these out in the $\Be_{|k-1|}$-direction, orthogonal to $\T_k$. For example, trivially, $\pi_{|i|}(t^k_j) = t^i_j$ for all $k<i<0$.% Note that 
%\begin{equation}
% \left\|t^k_j\right\| = \max_{i\in [k,-1]}X^i_j.\label{eq:mag_tnk}
%\end{equation}

%In fact, the samples that are pushed out in passing from $t^k_j$ to $t^{k-1}_j$ are selected from the same spot, $t^k_{k-1}$, on $\T_k$. Thus, $\T_{k-1}$ is derived by adding at most one branch, in the $(k+1)^{\text{st}}$ coordinate direction, to $\T_k$. We call this the \emph{Line-Breaking Property}, as this is the key property of Aldous's line-breaking construction, which motivated us to leverage this same property in the bead-crushing construction of Section \ref{sec:bead_crush}.

\begin{proposition}[Lemma 1 and Propositions 4, 5, 6 of \cite{FormHaulPitm17}]\label{prop:tree_limits}
 \begin{enumerate}[label=(\roman*),ref=(\roman*)]
  \item \emph{Line-breaking property of $\T$}.  For $k\leq -1$ and $j\in\BZ\setminus [k-1]$, if $t^{k-1}_j\neq t^k_j$ then $t^k_j = t^k_{k-1}$. Informally, all samples that are ``pushed out'' in passing from $t^k_j$ to $t^{k-1}_j$ are selected from the same spot on $\T_k$, namely $t^k_{k-1}$. 
   Moreover, regardless of whether $t^{k-1}_j=t^k_j$,\label{item:line_breaking}
  \begin{equation}
   %\{t^{k-1}_j\neq t^k_j\} &\subseteq \{t^k_j = t^k_{k-1}\}\text{ and}\label{eq:new_branch_projection}\\
   \left(X^{k-1}_j - \left\|t^k_j\right\|\right)_+ = \left(X^{k-1}_j - \left\|t^k_{k-1}\right\|\right)_+.\label{eq:new_branch_X}
  \end{equation}
  
  \item  For each $j\ge 1$, the sequence $(t^k_j,\,k<0)$ converges a.s.\ in $\ell_1$. Call the limit $t_j$. \label{item:tree_limits}
  We define
  \begin{equation*}
   \T := \cl\left(\bigcup\nolimits_{k<0} \T_k\right).
  \end{equation*}
  %Then $\T$ is a random real tree.
  
  \item The family $(t_j,\,j\geq 1)$ is exchangeable and has a driving measure $p$. Likewise, for every $k<0$, the family $(t^k_j, j\geq 1)$ is exchangeable and has a driving measure $p_k$.\label{item:driving}
  
  \item  For distinct $u,v\in\BN$,\label{item:MRCA_equals_fringe}
  \begin{equation}
   (u\wedge v)_{\cH}\cap\BN = \{j\in\BN\colon t_j\in \fringe{(t_u\wedge t_v)_{\ell}}{\T}\}.%\quad \text{for} \quad u\neq v\in\BN.
   \label{eq:MRCA_equals_fringe}
  \end{equation}
 \end{enumerate}
\end{proposition}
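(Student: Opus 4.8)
The plan is to regard this as a restatement --- in the notation of Definition \ref{def:constr_samples} --- of Lemma 1 and Propositions 4--6 of \cite{FormHaulPitm17}, and to reprove it from the single computation that makes everything transparent: inducting on \eqref{t-defn} and using $X^i_j\in[0,1]$ gives the closed form
\[
 \big\|t^k_j\big\|_{\ell_1} \;=\; \max_{k\le i\le -1}X^i_j \;\le\; 1, \qquad\text{i.e.}\qquad t^k_j \;=\; \sum_{i=k}^{-1}c_{i,j}\,\Be_{|i|}, \quad c_{i,j}:=\big(X^i_j-\max_{i<i'\le-1}X^{i'}_j\big)_+ .
\]
Besides this I would use two standing facts: $X^i_j = X^j_i$ a.s.\ (the MRCA is symmetric in its arguments), and the ternary identity $\{X^a_b\le X^a_c\} = \{c\in(a\wedge b)\}$ of Proposition \ref{prop:spinal}\ref{item:spinal:equiv}, which, with the nestedness axiom for hierarchies, carries the combinatorial tree structure.

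Parts (ii) and (iii) come quickly. For (ii): since $\pi_{|k'|}(t^k_j) = t^{k'}_j$ for $k<k'<0$, the tail $t^k_j - t^{k'}_j$ is supported on coordinates $|k'|+1,\dots,|k|$ with $\ell_1$-norm $\|t^k_j\|-\|t^{k'}_j\|$; as $\|t^k_j\|$ is nondecreasing in $-k$ and bounded by $1$, it is Cauchy, so $(t^k_j)_{k<0}$ converges in $\ell_1$. For (iii): $t_j$ is one fixed measurable functional of $(X^i_j)_{i\le-1}$, applied by the same recipe for each $j$, so the exchangeability of this array in the index $j\ge1$ --- a restriction of Proposition \ref{prop:spinal}\ref{item:spinal:exch} to permutations supported in $\BN$ --- passes through it, and a driving measure then exists because $\ell_1$ is Polish; likewise for each $(t^k_j)_{j\ge1}$.

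The heart of the matter is part (i). From the closed form, $t^{k-1}_j\neq t^k_j$ iff $X^{k-1}_j > X^i_j$ for all $i\in\{k,\dots,-1\}$, which (using $X^a_b=X^b_a$ and the ternary identity) I would rewrite as: $i\notin(k-1\wedge j)$ for every such $i$. Then, for each fixed $i$, I would run the three-point argument on $i,j,k-1$: the blocks $(i\wedge k-1)$ and $(j\wedge k-1)$ both contain $k-1$ and hence are nested, and $i$ lies in the first but not the second, forcing $(j\wedge k-1)\subsetneq(i\wedge k-1)$ and so $j\in(i\wedge k-1)$; symmetrically $k-1\in(i\wedge j)$; the ternary identity then yields $X^i_{k-1}\le X^i_j$ and $X^i_j\le X^i_{k-1}$, i.e.\ $X^i_j = X^i_{k-1}$ for every $i\in\{k,\dots,-1\}$. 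Since each maximum appearing in $c_{i,j}$ with $i\in\{k,\dots,-1\}$ ranges over a subset of $\{k,\dots,-1\}$, these equalities propagate to $c_{i,j}=c_{i,k-1}$, whence $t^k_j = t^k_{k-1}$ --- the line-breaking property. Then \eqref{eq:new_branch_X} is immediate when $t^{k-1}_j\neq t^k_j$ (both norms coincide), and when $t^{k-1}_j = t^k_j$ I would take $i_0$ attaining $\max_{k\le i\le-1}X^i_j\ge X^{k-1}_j$ and apply the ternary identity twice, getting first $i_0\in(k-1\wedge j)$ and then $X^{k-1}_j\le X^{i_0}_{k-1}\le\|t^k_{k-1}\|$, so both sides vanish.

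For part (iv) I would first pass to geometry: segments of $\T$ being the $\ell_1$-paths $[[0,\cdot]]_\ell$, and the $\ell_1$-norm being nondecreasing along each such path, $(t_u\wedge t_v)_\ell$ lies on $[[0,t_u]]_\ell$ and one gets $t_j\in\fringe{(t_u\wedge t_v)_\ell}{\T}$ iff $\|(t_u\wedge t_v)_\ell\|_{\ell_1}\le\|(t_u\wedge t_j)_\ell\|_{\ell_1}$. So the statement reduces, via the ternary identity, to the single identity $\|(t_u\wedge t_w)_\ell\|_{\ell_1} = X^u_w$ for all $w$, which I would prove by the mechanism of part (i): with $m$ the least $|i|$ such that $i\in(u\wedge w)$, show $t_u$ and $t_w$ agree on their first $m-1$ coordinates, that the $m$-th coordinate of their branch point is $\min(c_{-m,u},c_{-m,w})$, and that repeated use of the ternary identity collapses the resulting sum to $\min(X^{-m}_u,X^{-m}_w)=X^u_w$ (the degenerate case $t_u=t_w$ being handled similarly). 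Combining, $t_j\in\fringe{(t_u\wedge t_v)_\ell}{\T}$ iff $X^u_v\le X^u_j$ iff $j\in(u\wedge v)_\cH$. I expect this last identity --- the dictionary between distance-from-root-to-branch-point and the asymptotic frequency $1-X^u_w$ of the corresponding MRCA --- to be the main obstacle, together with the bookkeeping it shares with part (i); the convergence and exchangeability in (ii)--(iii) are routine.
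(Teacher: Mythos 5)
The paper does not prove Proposition \ref{prop:tree_limits} at all: as the bracketed attribution indicates, it is imported wholesale from Lemma 1 and Propositions 4--6 of \cite{FormHaulPitm17}. So there is no internal proof to compare against; what you have written is a self-contained reconstruction, and on inspection it is essentially sound. The closed form $\|t^k_j\|=\max_{k\le i\le-1}X^i_j$ with coordinates $c_{i,j}=(X^i_j-\max_{i<i'\le-1}X^{i'}_j)_+$ follows immediately from \eqref{t-defn}, and it does make (ii) a monotone-Cauchy triviality and (iii) a matter of pushing exchangeability of the array $(X^i_j)$ through a fixed measurable functional. Your part (i) argument is correct: the hypothesis $t^{k-1}_j\ne t^k_j$ is exactly $X^{k-1}_j>X^i_j$ for all $i\in\{k,\dots,-1\}$, hence $i\notin((k-1)\wedge j)$; the nesting of $(i\wedge(k-1))$ and $(j\wedge(k-1))$ (both contain $k-1$, and nesting passes to the $\BZ$-limit by \eqref{eq:MRCA_projection}) then forces $X^i_j=X^i_{k-1}$ for each such $i$, and these equalities determine $t^k_j=t^k_{k-1}$ coordinate by coordinate. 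The case analysis for \eqref{eq:new_branch_X} also checks out.

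Two places in part (iv) are thinner than the rest and deserve explicit treatment. First, you should justify that the least $|i|$ with $-i\in(u\wedge w)$ exists, which amounts to $X^u_w<1$ a.s.; this follows from $1-X^u_w=q_u[X^u_w,1]$ with $X^u_w$ itself distributed as a sample from the driving measure $q_u$ of $(X^u_j)_j$. Second, your identification of that least index $m$ with the coordinate at which $[[0,t_u]]_\ell$ and $[[0,t_w]]_\ell$ separate is not quite right: when $X^{-m}_u=X^{-m}_w=X^u_w$ the two paths still agree at coordinate $m$ (and possibly beyond), even though $-m\in(u\wedge w)$. The cleanest repair is the dichotomy your own three-point argument yields: for $-i\notin(u\wedge w)$ one gets $X^{-i}_u=X^{-i}_w\le X^u_w$, while for $-i\in(u\wedge w)$ one gets $\min(X^{-i}_u,X^{-i}_w)=X^u_w$. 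From this, every coordinate preceding the first disagreement satisfies $X^{-i}_u,X^{-i}_w\le X^u_w$ (otherwise tracing back to the earliest violation produces an earlier disagreement), the first disagreeing index necessarily lies in $(u\wedge w)$, and $\|(t_u\wedge t_w)_\ell\|=\min(\|t^{-m''}_u\|,\|t^{-m''}_w\|)=X^u_w$ follows regardless of whether $m''=m$. With that identity in hand your reduction of \eqref{eq:MRCA_equals_fringe} to the ternary identity via monotonicity of the norm along $\ell_1$-paths is correct. In short: the proposal is a valid proof strategy, with one genuinely delicate step (the dictionary $\|(t_u\wedge t_w)_\ell\|=X^u_w$) that your sketch underestimates but that can be completed along the lines you indicate.
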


\begin{theorem}[Theorem 5 and its proof in \cite{FormHaulPitm17}]\label{thm:hier_rep_constr}
 The random law $\Theta(\T,\ell_1,0,p)$ is a r.c.d.\ for $(\cH'_n,\,n\ge1)$ on $\tail(\cH'_n)$.
\end{theorem}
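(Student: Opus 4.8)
The plan is to identify $(\cH'_n,\,n\ge1)$ with a hierarchy derived by i.i.d.\ sampling from $(\T,\ell_1,0,p)$, and then to read the statement off from the fact that the directing measure of an exchangeable sequence is measurable with respect to its tail $\sigma$-algebra.

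First I would note that $\T$ is a deterministic function of $p$: the Spanning property of the constructed tree gives $\T=\cl\bigcup_{x\in\supp p}[[0,x]]_\ell$, so the quadruple $(\T,\ell_1,0,p)$, and in particular $\Theta(\T,\ell_1,0,p)$, is recovered from $p$ alone. Next, undoing the relabelling $b$ and applying Proposition \ref{prop:tree_limits}\ref{item:MRCA_equals_fringe} to all pairs $u,v$ (and adjoining the singleton blocks and the ambient set) one sees that $(\cH'_n,\,n\ge1)$ is \emph{exactly} the hierarchy derived by sampling the family $(t_j)$, reindexed to $\BN$ via $b$, from $(\T,\ell_1,0,p)$. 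By Proposition \ref{prop:tree_limits}\ref{item:driving}---together with the exchangeability in Proposition \ref{prop:spinal}\ref{item:spinal:exch}, which has to be propagated from the positive indices to all of $\BZ$ so that $b$ really yields a single $\BN$-indexed exchangeable sequence---this family is exchangeable with directing measure $p$. Hence, conditionally on $\sigma(p)$, the samples are i.i.d.\ $p$, and by the very definition of $\Theta$ the conditional law of $(\cH'_n)$ given $\sigma(p)$ equals $\Theta(\T,\ell_1,0,p)$.

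It then remains to show $\tail(\cH'_n)=\sigma(p)$ up to $\Pr$-null sets. For $\sigma(p)\subseteq\tail(\cH'_n)$: for each fixed $j$ the consistent subfamily $\big(\restrict{\cH'_k}{\{j,\dots,k\}},\ k\ge j\big)$ is itself an exchangeable hierarchy on the index set $\{j,j+1,\dots\}\cong\BN$, and running the construction of Definition \ref{def:constr_samples} on it returns a tree in the same isomorphism class---equivalently, the same directing measure up to isometry, the directing measure being shift-stable---so $p$ is measurable with respect to $\sigma\big(\restrict{\cH'_k}{\{j,\dots,k\}},\ k\ge j\big)$ for every $j$, and intersecting over $j$ gives the inclusion. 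For the reverse inclusion: each $\restrict{\cH'_k}{\{j,\dots,k\}}$ is a function of $\T$ and of the finitely many samples corresponding to indices $j,\dots,k$, so $\tail(\cH'_n)\subseteq\bigcap_j\sigma\big(p,\ (\text{samples of index}\ge j)\big)=\bigcap_j\sigma\big((\text{samples of index}\ge j)\big)$, using that $p$ is tail-measurable for the sample sequence; and conditionally on $p$ that sequence is i.i.d., so by the conditional Kolmogorov $0$--$1$ law this intersection is $\sigma(p)$ up to null sets. Combining the two conditional-law computations with $\tail(\cH'_n)=\sigma(p)$ shows that $\Theta(\T,\ell_1,0,p)$ is $\tail(\cH'_n)$-measurable and is a regular conditional distribution for $(\cH'_n)$ given $\tail(\cH'_n)$.

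I expect the main obstacle to be the $\sigma$-algebra identification, and within it the inclusion $\sigma(p)\subseteq\tail(\cH'_n)$: this requires knowing that the tree produced by the construction is unchanged up to isomorphism when the construction is run on an index-shifted consistent subfamily of the hierarchy, i.e.\ that the IP tree is genuinely a ``tail object.'' A secondary technical point is the bookkeeping around the bijection $b\colon\BN\to\BZ$---extending the a.s.\ convergence of the samples $t^k_j$ and their exchangeability from the positive indices, where Proposition \ref{prop:tree_limits} records them, to the full index set $\BZ$, so that everything assembles into a single exchangeable $\BN$-indexed sample sequence with directing measure $p$.
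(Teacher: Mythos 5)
First, a point of comparison: the paper does not prove this statement at all --- it is imported, together with its proof, from \cite{FormHaulPitm17} (Theorem 5 there), so there is no in-paper argument to measure you against. Your overall strategy --- realize the hierarchy as one sampled from $(\T,\ell_1,0,p)$, use that the samples are conditionally i.i.d.\ given the directing measure, and identify $\tail(\cH'_n)$ with $\sigma(p)$ up to null sets --- is the natural route and is in the spirit of the cited proof. As written, however, it has two genuine gaps.

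The first is the claim that $(\cH'_n,\,n\ge1)$ is \emph{exactly} the hierarchy derived by sampling the family $(t_j)$ from $(\T,\ell_1,0,p)$. It is not. Proposition \ref{prop:tree_limits}\ref{item:MRCA_equals_fringe} identifies $(u\wedge v)\cap\BN$ with fringe sets of samples only for $u,v\in\BN$, i.e.\ it recovers the restriction of the $\BZ$-hierarchy to the \emph{positive} indices, which under $b$ is the restriction of $\cH'$ to the even integers; the nonpositive indices are consumed in building the tree. Your proposed repair --- extending the limits $t_j$ to all $j\in\BZ$ --- cannot work: for fixed $j\le -1$, Definition \ref{def:constr_samples} defines $t^{k}_j$ only for the finitely many $k$ with $j\notin[k]$, so there is no limit to take. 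The correct bridge is an exchangeability argument at the level of conditional laws, showing that given the tail (or exchangeable) $\sigma$-algebra the conditional law of $\restrict{\cH'_n}{2\BN\cap[n]}$ determines and equals that of the full hierarchy; this step is absent. The second gap is the inclusion $\sigma(p)\subseteq\tail(\cH'_n)$, which you reduce to the assertion that running the construction on an index-shifted consistent subfamily returns a tree with the same directing measure up to isomorphism. That assertion is the actual crux of the theorem --- it is essentially the tail-measurability claim of Theorem \ref{thm:hierarchies_IP}\ref{item:hIP:rcd}, which this paper \emph{derives from} the present theorem --- and you offer no argument for it, so this portion of the proof is a restatement of what must be proved rather than a technical loose end.
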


To this description, we add the following.

\begin{proposition}\label{prop:its_a_bead_crush}
 The quadruple $(\T,\ell_1,0,p)$ is a random IP tree arising from a bead crushing construction as in Section \ref{sec:bead_crush}, with the caveat that at some steps $k$, $(\T_{k-1},p_{k-1}) = (\T_{k},p_{k})$.
\end{proposition}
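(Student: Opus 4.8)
The plan is to identify, from the data $(X^i_j)$ of Definition~\ref{def:constr_samples}, the ingredients $(q_n)$, $(x_n)$, $(a_n)$ that reproduce the construction of Section~\ref{sec:bead_crush}. First I would match up the recursion indices: the bead-crushing construction indexes its steps by $n\ge 1$ and works in coordinates $\Be_{n+1}$, whereas Definition~\ref{def:constr_samples} indexes by $k\le -1$ and works in coordinate $\Be_{|k-1|}$; so I set $n \leftrightarrow |k-1|$, i.e.\ step $n$ of the bead crush corresponds to passing from $\T_{-n}$ to $\T_{-n-1}$ (with $\T_0 = \T_{-0}$, $p_0 = \delta_0$ in both notations, since $t^0_j = 0$ for all $j$). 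The point $x_n$ of insertion should be $t^{-n}_{-n-1} = t^{-n}_{k-1}$, the spot identified in Proposition~\ref{prop:tree_limits}\ref{item:line_breaking} as the unique location from which all samples get pushed out at this step; this is an atom of $p_{-n}$ by exchangeability (it is the location of the distinguished sample with index $k-1=-n-1$, which is a macroscopic atom of the driving measure whenever a genuine new branch is created), and $m_n = p_{-n}(\fringe{x_n}{\T_{-n}})$ by the Spacing property, which holds since $(\T_{-n},p_{-n})$ is already known to be an IP tree via Theorem~\ref{thm:hier_rep_constr}'s framework (or can be shown directly).

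The key computation is to check that the map $\phi_{-n}$ of \eqref{eq:bead_crush}, built from the appropriate $a_n$ and $q_{n+1}$, sends $[0,L_{n+1}]$ exactly onto the new segment $[[t^{-n}_{-n-1}, t^{-n-1}_{-n-1}]]_\ell$ and pushes $q_{n+1}$ forward to the increment $p_{-n-1}-p_{-n}$ restricted to that segment. The natural choice is $a_n = m_n = p_{-n}(\fringe{x_n}{\T_{-n}})$ and $q_{n+1}$ the driving measure of the exchangeable family of ``residual positions'' of those samples that sit at $x_n$ in $\T_{-n}$ but get relocated in $\T_{-n-1}$ — concretely, $q_{n+1}$ is the law-derived (driving) measure on $[0,1]$ arising from the conditionally-i.i.d.\ values $\big(X^{-n-1}_j - \|t^{-n}_{-n-1}\|\big)_+ / a_n$ over the relevant $j$, which by \eqref{eq:new_branch_X} and the exchangeability in Proposition~\ref{prop:spinal}\ref{item:spinal:exch} does not depend on the choice of reference sample. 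One then verifies $L_{n+1} = \max\supp(q_{n+1})$ equals the length of the new segment, that $q_{n+1}$ is uniformized (this is forced: $(\T_{-n-1},p_{-n-1})$ is an IP tree, so by Lemma~\ref{lem:1D_IPT} the projection of $p_{-n-1}$ onto the new segment, rescaled, is uniformized — alternatively it follows from the Spacing computation in Lemma~\ref{lem:bead_crush_IP_0} read backwards), and that $\phi_{-n}(q_{n+1})$ accounts for exactly the mass moved, using \eqref{eq:bead_crushing_fringe_proj_1}-type bookkeeping. The caveat in the statement — that $(\T_{k-1},p_{k-1}) = (\T_k,p_k)$ at some steps — corresponds precisely to steps where no sample is pushed out, i.e.\ $X^{k-1}_j \le \|t^k_j\|$ for all $j$; at such steps one takes $q_{n+1} = \delta_0$ (or simply skips the step, re-indexing), which is consistent with the bead-crushing setup allowing trivial steps.

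The final point is that these choices genuinely reconstruct $(\T,\ell_1,0,p)$: the trees $\T_{-n}$ and the measures $p_{-n}$ produced by plugging $(q_n),(x_n),(a_n)$ into \eqref{eq:bead_crush} agree with those of Definition~\ref{def:constr_samples} by induction on $n$ (base case $\T_0=\{0\}$, $p_0=\delta_0$; inductive step is the segment/pushforward identification above), and then both limiting objects $\T = \cl(\bigcup_n \T_{-n})$ and $p = \lim p_{-n}$ coincide by the uniqueness in the Daniell--Kolmogorov extension (Proposition~\ref{prop:tree_limits}\ref{item:tree_limits},\ref{item:driving}). Hence $(\T,\ell_1,0,p)$ is, by definition, an IP tree arising from a bead-crushing construction, and Proposition~\ref{prop:bead_crush_IP} even re-confirms it is an IP tree.

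I expect the main obstacle to be the precise verification that the ``residual position'' measures $q_{n+1}$ are (a) well-defined as driving measures independent of reference-sample choices and (b) automatically uniformized, together with the measure-theoretic bookkeeping showing $p_{-n-1} - p_{-n}$ on the new segment is exactly $a_n(-\delta_{x_n} + \phi_{-n}(q_{n+1}))$ — i.e.\ that the mass leaving $x_n$ and the mass appearing along the new branch match the bead-crushing prescription. The exchangeability inputs from Propositions~\ref{prop:spinal} and~\ref{prop:tree_limits}\ref{item:driving} are what make (a) work, and the IP-tree/Spacing property (already available for each $\T_{-n}$ through Theorem~\ref{thm:hier_rep_constr} and Lemma~\ref{lem:1D_IPT}) is what forces (b); the rest is careful but routine accounting, essentially running the proof of Lemma~\ref{lem:bead_crush_IP_0} in reverse.
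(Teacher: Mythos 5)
Your overall strategy is the paper's: identify $(q_n)$, $(x_n)$, $(a_n)$ so that one step of Definition~\ref{def:constr_samples} matches one step of \eqref{eq:bead_crush}, with pauses when no sample moves. But there is a concrete error in your choice of $a_n$. You set $a_n = m_n = p_{-n}(\fringe{x_n}{\T_{-n}})$, claiming the second equality ``by the Spacing property.'' Spacing gives $d(0,x_n)+p_{-n}(\fringe{x_n}{\T_{-n}})=1$; it does not identify the fringe mass with the atom mass $p_{-n}\{x_n\}$, and in general $p_{-n}(\fringe{x_n}{\T_{-n}})>p_{-n}\{x_n\}$ (the point $t^k_{k-1}$ is typically an interior point with further mass beyond it), so your $a_n$ can exceed the allowed range $(0,m_n]$ and would make $p_{n+1}$ a signed measure. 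More importantly, even $a_n=p_{-n}\{x_n\}$ is wrong: the correct $a_k$ is the asymptotic frequency of the set $S=\{j: t^{k-1}_j\ne t^k_j\}$ of samples that actually relocate, which is only \emph{bounded above} by the atom mass $m_k$ (via Proposition~\ref{prop:tree_limits}\ref{item:line_breaking}) and is strictly smaller whenever the atom at $t^k_{k-1}$ splits over several later steps, e.g.\ when the co-located samples separate into several sub-blocks of which only one stays with $k-1$. Positivity of this frequency is itself a point to argue (the paper uses de Finetti on the exchangeable indicators $\cf\{j\in S\}$). Your normalization of the residual positions is consistent with your $a_n$ but therefore also off; the paper's is $Y_j=1-(1-X^{k-1}_j)/a_k$ with the correct $a_k$.

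The second gap is circularity in your uniformization argument. You want to deduce that $q_{n+1}$ is uniformized from the fact that $(\T_{-n-1},p_{-n-1})$ is an IP tree, citing Theorem~\ref{thm:hier_rep_constr}; but that theorem says nothing about the intermediate trees being IP trees, and in the paper's logic the Spacing property of $(\T_{k-1},p_{k-1})$ is a \emph{consequence} of having exhibited it as a bead-crushing step with uniformized $q_{k-1}$ (via Lemma~\ref{lem:bead_crush_IP_0}), not an input. The uniformization must be proved directly: the paper computes $q_{k-1}[Y_j,1]=1-Y_j$ by counting frequencies within $S$, using \eqref{eq:spinal_def} and \eqref{eq:spinal_equiv} exactly as in the base-step computation \eqref{eq:iabc:spacing}, and then uses density of the $(Y_j,\,j\in S)$ in the support of $q_{k-1}$. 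The Spacing property of the \emph{previous} tree $(\T_k,p_k)$ (available by induction) is then what shows $\|\phi_k(Y_j)\|=X^{k-1}_j$, hence $\phi_k(Y_j)=t^{k-1}_j$. With $a_k$ corrected and the uniformization argued directly rather than assumed, the rest of your outline (induction, Daniell--Kolmogorov identification of the limits) matches the paper.
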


The description of bead crushing in Section \ref{sec:bead_crush} does not always allow this possibility of the tree going unchanged in one of the steps. We refer to this variant of bead crushing as \emph{bead crushing with pauses}. Of course, trees arising from the construction with pauses are still IP trees.

\begin{proof}
 For convenience, we restate \eqref{eq:bead_crush} for use in the present setting:
 \begin{equation}\label{eq:iabc:bead_crush}
 \begin{split} % 1-\|t^k_{k-1}\| = p_k(\fringe{t^k_{k-1}}{\T})
  \phi_k(z) &:= t^k_{k-1} + \big(p_k(\fringe{t^k_{k-1}}{\T_k}) + (z-1) a_k\big)\Be_{n+1} \quad \text{for }z\in [0,L_{k-1}],\\
  \T_{k-1} &:= \T_k\cup [[t^k_{k-1},\phi_k(L_{k-1})]]_{\ell},\\
  p_{k-1} &:= p_k + a_k\left(-\delta_{t^k_{k-1}} + \phi_k\left(q_{k-1}\right)\right),
 \end{split}
 \end{equation}
 where $L_{k-1} = \max(\textnormal{support}(q_{k-1}))$. 
 We will prove that, at each step in the iterative construction of $(\T,p)$, if $(\T_{k-1},p_{k-1})\neq (\T_{k},p_k)$ then there exists a uniformized law $q_{k-1}$ and a mass $a_k\in (0,m_k]$, where $m_k := p_k\{t^k_{k-1}\} > 0$, such that $(\T_{k-1},p_{k-1})$ is obtained from $(\T_{k},p_k)$ as in \eqref{eq:iabc:bead_crush}.
 
 Base step: $k=0$. By definition, $t^0_{-1} = 0$ and $t^{-1}_j = X^{-1}_j\Be_1$ for each $j\neq -1$. We set $a_0 := m_0 = 1$. Then, following \eqref{eq:iabc:bead_crush}, $\phi_{-1}(z) = z\Be_1$ for $z\in [0,1]$. Let $q_{-1}$ denote the driving measure of the sequence $(X^{-1}_j,\,j\ge 1)$. 
 \begin{equation}\label{eq:iabc:spacing}
 \begin{split}
  1 - X^{-1}_j &= \lim_{n\to\infty}\frac{\# ((-1\wedge j)\cap [\pm n])}{2n}\\
  	&= \lim_{n\to\infty}\frac{\# \{i\in [\pm n]\colon X^{-1}_i \ge X^{-1}_j\}}{2n} = q_{-1}[X^{-1}_j,1],
 \end{split}
 \end{equation}
 where the first equation follows from \eqref{eq:spinal_def}, the second from \eqref{eq:spinal_equiv}, and the last from the definition of $q_{-1}$. Since the $X^{-1}_j$ are dense in the closed support of $q_{-1}$, we find that $q_{-1}$ is uniformized, in the sense of Definition \ref{def:uniformize}. Since $t^{-1}_j = \phi_{-1}(X^{-1}_j)$ and $p_{-1}$ is the driving measure of the $(t^{-1}_j)$, we conclude that $p_{-1} = \phi_{-1}(q_{-1})$, consistent with the last line of \eqref{eq:iabc:bead_crush}. Thus $(\T_1,\ell_1,0,p_1)$ is an IP tree arising from a single step of a bead crushing construction.
 
 Inductive step. Fix $k < 0$ and assume that $(\T_k,\ell_1,0,p_k)$ is an IP tree arising from $|k|$ steps of the bead crushing construction with pauses. Let
 \begin{equation*}
  S := \big\{j\in\BZ\setminus [k-1]\colon ((k-1) \wedge j)_{\cH} \cap [k] = \emptyset\big\}.
 \end{equation*}
% \begin{split}
%  S_{k-1} &:= \big\{j\in\BZ\setminus [k-1]\colon ((k-1) \wedge j)_{\cH} \cap [k] = \emptyset\big\}.\\
%  \text{and} \quad Y^{k-1}_j &:= 1-\lim_{n\to\infty}\frac{\# ((k-1) \wedge j)_{\cH_n}}{\# (S_{k-1}\cap[\pm n])} \qquad \text{for }j\in S_{k-1}.
% \end{split}
 Informally, $S$ is the set of indices that remain in a block with $k-1$ in the hierarchy until after $k-1$ has branched away from all of the indices $k,k+1,\ldots,-1$. By \eqref{eq:spinal_equiv} and the definition of the $(t^i_j)$,
 \begin{equation*}
  S = \big\{j\in\BZ\setminus [k-1]\colon X^{k-1}_j > \max_{i\in [k]} X^i_j\big\}
  		= \big\{j\in\BZ\setminus [k-1]\colon t^{k-1}_j\neq t^k_j\big\}.
 \end{equation*}
 Thus, $S = \emptyset$ if and only if $(\T_{k-1},p_{k-1}) = (\T_k,p_k)$, in which case we have nothing to prove. So assume $S\neq\emptyset$. 
 
 The family $(\cf\{j\in S\},\,j\in \BZ\setminus [k-1])$ is exchangeable, and $S\neq\emptyset$ means that not all entries are zero, so by de Finetti's theorem,
 $$a_k := \lim_{n\to\infty}\frac{\#(S\cap [\pm n])}{2n} > 0.$$
 By Proposition \ref{prop:tree_limits}\ref{item:line_breaking}, every index $j\in S$ satisfies $t^k_j = t^k_{k-1}$. Thus, $a_k$ is bounded above by $m_k = p_k\{t^k_{k-1}\}$.
 
 Now, for $j\in S$, let
 \begin{equation}\label{eq:new_branch_spinal}
  Y_j := 1 - \frac{1-X^{k-1}_j}{a_k} = 1-\lim_{n\to\infty}\frac{\# \big(((k-1) \wedge j)\cap [\pm n]\big)}{\# (S\cap[\pm n])}.
 \end{equation}
 Here, the rightmost formula follows by plugging in the definitions of $a_k$ and $X^{k-1}_j$ and canceling out factors of $2n$. Let $f$ denote the unique increasing bijection from $\BN$ to $S\cap\BN$. The sequence $(Y_{f(j)},\,j\in \BN)$ is exchangeable; let $q_{k-1}$ denote its driving measure. %and let $L_{k-1} := \sup_{j\in S_{k-1}}Y^{k-1}_j$. We now show that $([0,L_{k-1}],d,0,q_{k-1})$, where $d$ denotes the Euclidean metric, is an IP tree. 
 By an argument similar to that in \eqref{eq:iabc:spacing}, $Y_j = 1 - q_{k-1}[Y_j,1]$ for each $j\in S$. Since the $(Y_j,\,j\in S)$ are dense in the closed support of $q_{k-1}$, we conclude that $q_{k-1}$ is uniformized. % This same argument shows that $(\T_1,\ell_1,0,p_1)$ possesses the spacing property.
 
 %By Proposition \ref{prop:bead_crush_IP}, it suffices to confirm that $(\T^{k-1},p_{k-1})$ is obtained from $(\T^k,p_k)$ by bead-crushing, as in \eqref{eq:bead_crush}. To that end, note that
 %\begin{equation*}
 %\begin{split}
 % a_k(1-Y^{k-1}_j) &= \lim_{n\to\infty} \frac{\#(S_{k-1}\cap [\pm n])}{2n} \frac{\# ((k-1) \wedge j)_{\cH_n}}{\# (S_{k-1}\cap[\pm n])}\\
 % 		&= \lim_{n\to\infty} \frac{\# ((k-1) \wedge j)_{\cH_n}}{2n} = 1-X^{k-1}_j.
 %\end{split}
 %\end{equation*}
 %Thus, for $\phi_k$ as in \eqref{eq:bead_crush} with $x_k = t^k_{k-1}$ and $a_k$ as above, we have
 Now, consider the map $\phi_k$ as defined in \eqref{eq:iabc:bead_crush}. Note that
 \begin{equation*}
  \big\|\phi_k(Y_j)\big\| = \big\|t^k_{k-1}\big\| + p_k(\fringe{t^k_{k-1}}{\T}) + a_k\big(Y_j -1\big)
  		= X^{k-1}_j,
 \end{equation*}
 where the second equality follows by appealing to the Spacing property of $(\T_k,p_k)$ at $t^k_{k-1}$ and plugging in the definition of $Y_j$. Thus, for $j\in S$, $\phi_k(Y_j)$ is a point embedded in the first $|k|+1$ coordinates in $\ell_1$ whose projection onto the first $|k|$ coordinates is $t^k_{k-1}$, and with $|k|+1^{\text{st}}$ coordinate equal to $X^{k-1}_j-\big\|t^k_{k-1}\big\|$. We conclude that $\phi_k(Y_j) = t^{k-1}_j$. Since $p_{k-1}$ is the driving measure for the sequence $(t^{k-1}_j,\,j\ge1)$, we find that it satisfies the third formula in \eqref{eq:iabc:bead_crush}. Therefore, $(\T_{k-1},\ell_1,0,p_{k-1})$ is an IP tree arising from $|k|+1$ steps of a bead-crushing construction with pauses, which completes our induction.
\end{proof}

\section{Two key propositions}\label{sec:key_props}

To prove our theorems we require two more major intermediate steps. 
%
%, that every IP tree can be isometrically embedded in $\ell_1$ by bead-crushing.
Let $(\cS,d,r,q)$ be a rooted, weighted $\BR$-tree, let $(s_i,\,i\in\BZ)$ denote i.i.d.\ samples from $q$, and let $(\cH_n,\,n\ge 1)$ denote the hierarchy on $\BZ$ derived from $(\cS,d,r)$ via these samples. In other words, modulo our choice to label with $\BZ$ rather than $\BN$, $(\cH_n)$ is exchangeable and independently generated (e.i.g.) with law $\Theta(\cS,d,r,q)$. Let $(\T,\ell_1,0,p)$ and $(t_j,\,j\geq 1)$ denote the random IP tree and samples that arise from applying the construction of Section \ref{sec:tree_construction} to $(\cH_n)$. 

%We will deduce Theorems \ref{thm:bijection} and \ref{thm:tree_Kingman} from the following proposition.

\begin{proposition}\label{prop:MSE_to_constr_tree}
 For every rooted, weighted $\BR$-tree, there is a deterministic bead-crushing construction, as in Section \ref{sec:bead_crush}, that yields an IP tree that: (i) is mass-structurally equivalent to $(\cS,d,r,q)$ and (ii) has the same image under $\Theta$ as $(\cS,d,r,q)$. In particular, the law of the random IP tree $(\T,\ell_1,0,p)$ is supported on the set of such trees.
\end{proposition}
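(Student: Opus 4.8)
The plan is to build the deterministic bead-crushing construction directly from the data of $(\cS,d,r,q)$, using the special points of $(\cS,d,r,q)$ as the skeleton of the construction, and then to verify the two claimed properties separately. First I would pass to the subtree $\Span(q)$, since mass-structural equivalence and $\Theta$ both depend only on $\Span(q)$ (samples from $q$ lie in $\supp(q)$, and segments from $r$ to such samples stay in $\Span(q)$). On $\Span(q)$, enumerate a countable dense-enough set of ``branching instructions'': first the root, then recursively, at each already-placed point $x$ that carries $q$-mass strictly inside its fringe subtree (i.e.\ $q(\fringe{x}{\cS}) > q\{x\}$ or $x$ is a branch point / isolated leaf of $\Span(q)$), read off the string of beads obtained by projecting $q|_{\fringe{x}{\cS}}$ onto the next relevant spine emanating from $x$, renormalize it to a probability measure on $[0,1]$, and check via Gnedin's correspondence (Lemma~\ref{lem:uniformization}, Lemma~\ref{lem:1D_IPT}) that its uniformization $q_n$ is a legitimate uniformized probability measure. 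The attachment point $x_n$, the mass $a_n := q(\fringe{x_n}{\cS}) - q\{x_n\}$ (or the appropriate sub-mass at a multi-branch point, handled by iterating), and the measure $q_{n+1}$ then feed into \eqref{eq:bead_crush}. One must order these instructions so that every special point of $\Span(q)$ is eventually reached; a breadth-first enumeration by the partial order ``$x \preceq y$ iff $x \in [[r,y]]$'' together with decreasing fringe-mass thresholds does this.

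Next I would verify property (i), mass-structural equivalence. The bijection $\phi$ from the special points of $(\cS,d,r,q)$ to the special points of the constructed IP tree $(\T,\ell_1,0,p)$ is the obvious one: each special point of $\cS$ gets placed, at some finite stage of the construction, at a point of $\T_n$, and this placement is injective and exhausts the special points of $\T$ by the Spanning/Spacing properties proved in Proposition~\ref{prop:bead_crush_IP} and Lemma~\ref{lem:bead_crush_IP_0}. Structure preservation (item~\ref{item:m_s:s} of Definition~\ref{def:mass_struct}) is immediate because the construction inserts each new branch into the fringe of its attachment point, so the ancestor relation among placed points is preserved by $\phi$. Mass preservation (item~\ref{item:m_s:m}) is the content of the recursion: by construction $a_n$ is exactly the renormalization constant matching $q$-fringe-mass to $p$-fringe-mass, and the identity $p(\fringe{\phi(x)}{\T}) = q(\fringe{x}{\cS})$ is propagated inductively using \eqref{eq:bead_crushing_fringe_proj_1}, \eqref{eq:bead_crushing_fringe_proj}, and \eqref{eq:space_to_span}; atom masses $p\{\phi(x)\} = q\{x\}$ and distances-to-root $p([[0,\phi(x)]]) = q([[r,x]])$ follow from the Spacing property \eqref{eq:IP_tree_spacing} of IP trees, which forces $d_{\ell_1}(0,\phi(x)) = 1 - p(\fringe{\phi(x)}{\T})$.

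For property (ii), that $(\cS,d,r,q)$ and the constructed IP tree have the same image under $\Theta$: once (i) is known, this could be quoted from Theorem~\ref{thm:MS_Theta}, but since that theorem is itself being built toward, I would instead give a direct argument. Sample $(s_i)$ i.i.d.\ from $q$ and $(t_i)$ i.i.d.\ from $p$; couple them so that $t_i$ lands in the branch of $\T$ corresponding, under $\phi$, to the location of $s_i$ in $\cS$. Because $a_n$ renormalizes fringe-masses exactly and the uniformized $q_n$'s encode the conditional laws of the string-of-beads decompositions, this coupling can be made so that $s_i \in \fringe{x}{\cS}$ iff $t_i \in \fringe{\phi(x)}{\T}$ for every special $x$ — and the hierarchy derived from a tree depends only on fringe-membership at special points (non-special points never separate two samples). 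Hence the derived hierarchies agree, so $\Theta(\cS,d,r,q) = \Theta(\T,\ell_1,0,p)$. The last sentence of the proposition — that the law of the random IP tree from Section~\ref{sec:tree_construction} is supported on such trees — follows because Theorem~\ref{thm:hier_rep_constr} identifies $\Theta(\T,\ell_1,0,p)$ as an r.c.d.\ for $(\cH'_n)$ given the tail, and $\Theta(\cH'_n) = \Theta(\cS,d,r,q)$, so a.s.\ $\Theta(\T,\ell_1,0,p)$ equals the (deterministic, e.i.g.) law $\Theta(\cS,d,r,q)$; Proposition~\ref{prop:its_a_bead_crush} already tells us $(\T,\ell_1,0,p)$ is a bead-crush IP tree, and by (ii) it has the right $\Theta$-image, placing it in the asserted set.

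The main obstacle I anticipate is the bookkeeping at branch points of $\Span(q)$ of degree $\geq 3$ and, more delicately, at points that are limits of branch points (so not isolated leaves and not themselves branch points but still ``structurally active''): the bead-crushing construction of Section~\ref{sec:bead_crush} inserts branches one spine at a time, so a single branch point of $\cS$ may need to be realized over infinitely many construction steps, and one must check that the enumeration still reaches every special point in the limit and that the fringe-mass identities survive the passage to the closure $\T = \cl(\bigcup \T_n)$. The tools for this limiting step are exactly \eqref{eq:space_to_span} and the countable-additivity/nesting argument there; the care is in choosing the enumeration and in showing no special point of $\cS$ is ``lost'' and no spurious special point of $\T$ is created. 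I would isolate this as a lemma about the combinatorics of the enumeration before assembling the proof.
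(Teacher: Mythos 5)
Your plan inverts the paper's logic, and the inversion is where the trouble lies. The paper does not attempt a direct deterministic construction from $(\cS,d,r,q)$: it samples $(s_i)$ i.i.d.\ from $q$, forms the derived hierarchy, and feeds that into the construction of Section \ref{sec:tree_construction}, which Proposition \ref{prop:its_a_bead_crush} has already identified as a bead-crushing construction (with pauses). Property (ii) is then immediate from Theorem \ref{thm:hier_rep_constr}, and essentially the entire proof consists of exhibiting a mass-structural isomorphism between $\cS$ and the \emph{random} tree $\T$, built out of the samples via Lemma \ref{lem:fringe_formula}: atoms correspond to repeated sample values, branch points of $\Span(q)$ to sampled MRCAs $(s_i\wedge s_j)_{\cS}\mapsto(t_i\wedge t_j)_{\ell}$, and isolated leaves to limit points of totally ordered families of samples. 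The deterministic existence claim is then read off from the almost-sure statement. In your version, the ``main obstacle'' you defer to a lemma (ordering the branching instructions so that every special point is reached, choosing the spines, and showing the fringe-mass identities survive the closure) is not bookkeeping; it is the mathematical content of the proposition, and the paper avoids it precisely by letting the samples dictate the order and the spines.

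Beyond that, there are two concrete gaps. First, your justification for (ii) --- that ``the hierarchy derived from a tree depends only on fringe-membership at special points (non-special points never separate two samples)'' --- is false. Take $\cS=[0,1]$ rooted at $0$ with $q$ Lebesgue measure: the only special points are the two endpoints, yet the derived hierarchy is the full comb $\{i\colon s_i\ge t\}$, $t\in[0,1]$, which records the total order of the samples; every separation there is effected by non-special points. The correct invariant is fringe-membership at the samples themselves and at their pairwise MRCAs, i.e.\ the identities \eqref{eq:bp_fringe_comp}--\eqref{eq:same_sample_isom} of Lemma \ref{lem:fringe_formula}, so your coupling must be built at the level of $\supp(q)$, not of $\scS_1$. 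Second, your argument for the final sentence only places the random tree $(\T,\ell_1,0,p)$ in the set of trees satisfying (ii); you never show it satisfies (i), i.e.\ that it is a.s.\ mass-structurally equivalent to $(\cS,d,r,q)$. That clause cannot be recovered from Theorem \ref{thm:MS_Theta} or Proposition \ref{prop:Theta_isom} without circularity, since both of their proofs invoke exactly this part of Proposition \ref{prop:MSE_to_constr_tree}; it has to be proved directly for the random tree, which is what the paper's sample-indexed bijection $\phi$ accomplishes.
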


We prove this proposition in Section \ref{sec:MSE_to_constr_tree}. Then, in Section \ref{sec:IP_tree_MSE} we prove the following.

\begin{proposition}\label{prop:IP_tree_MSE}
 If two IP trees are mass-structurally equivalent then they are isomorphic.
\end{proposition}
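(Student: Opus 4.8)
The plan is to show that a mass-structural isomorphism $\phi\colon\scS_1\to\scS_2$ between two IP trees $(\T_1,d_1,r_1,p_1)$ and $(\T_2,d_2,r_2,p_2)$ extends uniquely to a root- and weight-preserving isometry of the whole trees. The key leverage is the Spacing property: in an IP tree, for every branch point or support point $x$ one has $d(r,x) = 1 - p(\fringe{x}{\T})$, so the metric is \emph{determined} by the mass function $x\mapsto p(\fringe{x}{\T})$ on the set of special points. Since $\phi$ preserves $p(\fringe{\cdot}{\T})$ by condition \ref{item:m_s:m}, it automatically preserves distances from the root among special points: $d_1(r_1,x) = 1 - p_1(\fringe{x}{\T_1}) = 1 - p_2(\fringe{\phi(x)}{\T_2}) = d_2(r_2,\phi(x))$. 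Combined with the structure-preservation \ref{item:m_s:s}, which says $\phi$ respects the ancestral order, this should force $d_1(x,y) = d_2(\phi(x),\phi(y))$ for all $x,y\in\scS_1$: writing $z = (x\wedge y)$ for the branch point (or one of $x,y$) where the segments $[[r_1,x]]$ and $[[r_1,y]]$ diverge, $z$ is again special, $\phi(z) = (\phi(x)\wedge\phi(y))$ by \ref{item:m_s:s}, and $d_1(x,y) = d_1(r_1,x)+d_1(r_1,y)-2d_1(r_1,z)$ matches the analogous expression downstairs.

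Next I would argue that $\scS_i$ is dense in $\T_i$, so that the isometry $\phi$ on special points extends uniquely to an isometry $\bar\phi\colon\T_1\to\T_2$ by completeness. Density follows from the Spanning property together with the structure of IP trees: every point of $\T_i = \Span(p_i)$ lies on a segment $[[r_i,x]]$ for some $x$ in the support of $p_i$; along such a segment, support points and branch points of $\Span(p_i)$ accumulate because the complement consists of "edges" whose endpoints are special, and along any nondegenerate sub-arc on which $p_i$ vanishes, the Spacing identity $d(r_i,\cdot) + p_i(\fringe{\cdot}{\T_i}) = 1$ cannot hold at interior points unless they are limits of special points — here I'd use that an isolated leaf or an accumulation of branch points is exactly what gets added to $\scS_i$. (One should check the edge case of a segment carrying continuous length-mass: there every point is in the support, hence special.) Then $\bar\phi$ is a bijective isometry; it preserves the root since $r_i$ is the unique point at distance $0$ from itself, i.e. $\bar\phi(r_1)$ is the point with $p_2$-fringe-mass $1$, which is $r_2$.

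Finally I would check that $\bar\phi$ pushes $p_1$ forward to $p_2$. Because $\bar\phi$ is an isometry carrying $r_1$ to $r_2$, it carries $\fringe{x}{\T_1}$ to $\fringe{\bar\phi(x)}{\T_2}$ for every $x$; the sets $\fringe{x}{\T_i}$ for $x$ ranging over special points generate the Borel $\sigma$-algebra (they separate points and are closed under the relevant intersections), and $p_1(\fringe{x}{\T_1}) = p_2(\fringe{\bar\phi(x)}{\T_2})$ by \ref{item:m_s:m}, so $\bar\phi_*p_1$ and $p_2$ agree on a generating $\pi$-system and hence coincide. (Atoms are handled directly by $p_1\{x\} = p_2\{\phi(x)\}$.) This gives the desired isomorphism.

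The main obstacle I anticipate is the density argument in the second paragraph: one has to be careful that $\scS_i$ really is dense, handling simultaneously the three types of special points, the segments carrying diffuse length-mass versus the "erosion"-type segments, and leaves that are limits of branch points (which are \emph{not} declared special but are limits of special points anyway). Getting the Spacing property to do this bookkeeping cleanly — essentially ruling out a nondegenerate arc in $\T_i$ disjoint from the closure of $\scS_i$ — is where the real content lies; the metric-rigidity and measure-pushforward steps are then largely formal consequences of the definitions.
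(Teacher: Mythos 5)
There is a genuine gap: the density claim on which your extension rests is false. The set $\scS$ of special points of an IP tree need not be dense in $\T$. Concretely, take $\T=[0,1]$ with the Euclidean metric, root $0$, and $p=\frac12\delta_0+\Leb|_{[1/2,1)}$. This is an IP tree (the uniformized measure of Lemma \ref{lem:1D_IPT} associated with the open set $(0,1/2)$ under Gnedin's bijection), but its special points are exactly $\{0,1\}$: there are no branch points, the isolated leaves are $0$ and $1$, and the only atom sits at $0$. A point of the closed support of the diffuse part of $p$ lying in the interior of a segment is \emph{not} special under Definition \ref{def:special_pts} --- your parenthetical ``every point is in the support, hence special'' is exactly the error. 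Likewise, the Spacing identity is only \emph{required} to hold at branch points and support points, so it cannot rule out a nondegenerate arc disjoint from $\cl(\scS)$; such arcs genuinely occur (both massless arcs and arcs carrying length measure). Consequently $\phi$ does not extend to all of $\T_1$ by density plus completeness, and your $\pi$-system argument also breaks down: the fringe subtrees at special points alone do not generate the Borel $\sigma$-algebra (in the example they are just $[0,1]$ and $\{1\}$), so one must verify $p_1(\fringe{x}{\T_1})=p_2(\fringe{\psi(x)}{\T_2})$ at \emph{every} $x$, which condition \ref{item:m_s:m} does not directly supply.

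The paper's proof does real work exactly where you anticipated trouble, but resolves it differently. It extends $\phi$ to a map $\psi$ on all of $\T_1$ by ``overshooting'': for $z$ with $\fringe{z}{\T_1}\cap\scS_1\neq\emptyset$, $\psi(z)$ is the point of $[[r_2,\phi(x)]]_{\T_2}$ at distance $d_1(r_1,z)$ from $r_2$, for any special $x$ beyond $z$ (plus a separate ``approximation'' clause for leaves that are limits of branch points). Then, for the measure identity at a non-special $x$ that is not a limit of special points, it takes the nearest points $y,z$ of $\cl(\scS_1)\cup\{r_1\}$ on either side of $x$ and uses the Spacing property to show that $p_1$ restricted to $[[y,z]]_{\T_1}\setminus\{z\}$ is \emph{forced} to be null on an initial sub-arc and equal to length measure on the terminal sub-arc of length $M=p_1([[y,z]]_{\T_1}\setminus\{z\})$; the same rigidity holds in $\T_2$, and $M$ is computable from masses at special points, yielding the fringe-mass identity everywhere. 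Your first paragraph (isometry of $\phi$ on $\scS_1\cup\{r_1\}$) and the final monotone-class step match the paper; what is missing is this local rigidity of $p$ on special-point-free arcs, which is the substitute for the false density claim.
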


%Our proof of Proposition \ref{prop:its_a_bead_crush} shows that $(\T,\ell_1,0,p)$ arises from a random bead-crushing construction, in which beads are selected to be crushed in a size-biased manner. In light of this, we view $(\T,\ell_1,0,p)$ as a \emph{size-biased random embedding} of $(\cS,d,r,q)$ in $\ell_1$. Thus, Propositions \ref{prop:its_a_bead_crush} and \ref{prop:isometric_to_constructed_tree} imply Theorem \ref{thm:bead_crush_rep}. To prove Proposition \ref{prop:isometric_to_constructed_tree} we require three lemmas. 

To prove these propositions we require two lemmas.  Extending the notation $(y\wedge z)_{\ell}$ of Definition \ref{def:special_path}, for $y,z\in\cS$, let $(y\wedge z)_{\cS}$ denote the unique point in the intersection $[[r,y]]\cap[[r,z]]\cap [[y,z]]$. This equals the branch point that separates $y$, $z$, and $r$, except in the degenerate circumstance that all three lie on a common segment, in which case $(y\wedge z)_{\cS}$ equals whichever of $y$, $z$, or $r$ lies between the other two.

\begin{lemma}\label{lem:span_samp_approx}
 It is a.s.\ the case that for every $j\in\BN$ and $\epsilon>0$, there is some $i\neq j$ for which $d((s_i\wedge s_j)_{\cS},s_j) < \epsilon$.\end{lemma}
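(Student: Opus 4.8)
The plan is to reduce the statement to the elementary fact that, almost surely, each of the i.i.d.\ samples has infinitely many of the other samples arbitrarily close to it. The geometric input is that for distinct $i,j$ the point $(s_i\wedge s_j)_{\cS}$ lies, by its very definition, on the segment $[[s_i,s_j]]$, and that segment is isometric to a closed real interval with $s_i$ and $s_j$ as its endpoints (property (ii) of Definition~\ref{def:real_tree}); hence $d\big((s_i\wedge s_j)_{\cS},s_j\big)\le d(s_i,s_j)$. So it is enough to show that, almost surely, for every $j\in\BN$ and every $\epsilon>0$ there is some $i\neq j$ with $d(s_i,s_j)<\epsilon$.

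To establish that, I would first record that, since $\cS$ is separable, $q$ gives full mass to its closed support, so $\Pr(s_j\in\supp(q))=1$ for each $j$; intersecting over $j\in\BN$ gives an almost sure event on which every sample $s_j$ with $j\in\BN$ lies in the closed support of $q$. Next, fixing $j\in\BN$ and $m\in\BN$ and conditioning on $s_j$: on the event $\{s_j\in\supp(q)\}$ the open ball $B(s_j,1/m)$ has positive $q$-mass, and because $(s_i,\,i\in\BZ\setminus\{j\})$ are i.i.d.\ with law $q$ and independent of $s_j$, with conditional probability one at least one (indeed, infinitely many) of them lies in $B(s_j,1/m)$. Taking expectations shows $\Pr\big(\exists\, i\neq j:\ d(s_i,s_j)<1/m\big)=1$, and intersecting over the countable collection of pairs $(j,m)$ produces a single almost sure event on which this holds for all $j$ and all $m$ simultaneously. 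Given $j\in\BN$ and $\epsilon>0$, choosing $m$ with $1/m<\epsilon$ and the corresponding $i\neq j$, the inequality above gives $d\big((s_i\wedge s_j)_{\cS},s_j\big)\le d(s_i,s_j)<\epsilon$, as desired.

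I do not anticipate a serious obstacle: once one sees that $(s_i\wedge s_j)_{\cS}\in[[s_i,s_j]]$ the problem collapses to a statement about the sample cloud, and that statement is immediate from the fact that samples hit every positive-mass set. The only points needing care are bookkeeping ones --- making the conclusion uniform in $j$ and $\epsilon$ by reducing the continuum of $\epsilon$'s to the sequence $(1/m)_{m\geq1}$ and handling the countably many $j$ by a single intersection of almost sure events, and being slightly careful that conditioning on $s_j$ leaves the remaining samples i.i.d.\ $q$. It is worth noting that the degenerate situations (e.g.\ $s_j=r$, or more generally $q(\fringe{s_j}{\cS})>0$) are automatically covered: there one may pick $i\neq j$ with $s_i\in\fringe{s_j}{\cS}$, whence $(s_i\wedge s_j)_{\cS}=s_j$ and the required distance is $0$.
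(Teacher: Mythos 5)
Your proposal is correct and follows essentially the same route as the paper: both reduce the claim to showing that, almost surely, the $\epsilon$-ball about each $s_j$ has positive $q$-mass (the paper via a countable cover by small-diameter open sets, you via $s_j$ a.s.\ lying in the closed support of $q$), then use independence of the remaining samples to find $s_i$ close to $s_j$, and finish with $d\big((s_i\wedge s_j)_{\cS},s_j\big)\le d(s_i,s_j)$ since $(s_i\wedge s_j)_{\cS}\in[[s_i,s_j]]$. Your bookkeeping with the sequence $(1/m)$ to get the statement uniformly over all $j$ and $\epsilon$ is a welcome explicit touch that the paper leaves implicit.
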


\begin{proof}
 Fix $j\in\BN$ and $\epsilon>0$. Recall from Definition \ref{def:real_tree} that we require $\BR$-trees to be separable and thus second countable. Thus, there exists a countable collection $\cA$ of open sets of diameter at most $\epsilon$ that cover $\cS$. It is a.s.\ the case that for every $U\in\cA$, if $q(U) = 0$ then $\{i\colon s_i\in U\}=\emptyset$. Consequently, the $\epsilon$-ball about $s_j$ a.s.\ has positive $q$-measure. Therefore there is a.s.\ some other sample $s_i$ with $d(s_i,s_j)<\epsilon$. Finally, $d((s_i\wedge s_j)_{\cS},s_j) < d(s_i,s_j) < \epsilon$. 
\end{proof}

We define
\begin{equation}
\begin{split}
 I^{\cS}(a) &:= \{j\in\BZ\colon a\in [[r,s_j]]_{\cS}\} \quad \text{for }a\in\cS\\
 \text{and} \quad I^{\cT}(b) &:= \{j\in\BN\colon b\in [[0,t_j]]_{\ell}\} \quad \text{for }b\in\T.
\end{split}
\end{equation}

\begin{lemma}\label{lem:fringe_formula}
 For $i,j,u,v\in\BN$ with $i\neq j$ and $u\neq v$, up to null events,
 \begin{equation}
  I^{\cS}((s_u\wedge s_v)_{\cS})\cap\BN
  		= (u\wedge v)\cap\BN 
  		= I^{\T}((t_u\wedge t_v)_{\ell}),\label{eq:bp_fringe_comp}
 \end{equation}
 \begin{equation}
  I^{\cS}(s_u)\cap\BN
  		= \BN\cap\bigcap_{k\in\BZ\setminus\{u\}} (u\wedge k) 
  		= I^{\T}(t_u),\label{eq:sample_fringe_comp}
 \end{equation}
 \begin{equation}
  \big\{(s_u\wedge s_v)_{\cS} = (s_i\wedge s_j)_{\cS}\big\}
  		= \big\{(t_u\wedge t_v)_{\ell} = (t_i\wedge t_j)_{\ell}\big\},\label{eq:same_bp_isom}
 \end{equation}
 \begin{equation}
  \text{and} \quad
  \{s_u = s_v\}
  		= \{t_u = t_v\}.\label{eq:same_sample_isom}
  		\quad\hphantom{and}
 \end{equation}
\end{lemma}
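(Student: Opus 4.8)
\textbf{Proof proposal for Lemma \ref{lem:fringe_formula}.}

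The plan is to prove the four identities by chasing the definitions of the index sets, the construction of the $(t_j)$ from the hierarchy $(\cH_n)$, and Propositions \ref{prop:spinal} and \ref{prop:tree_limits}, handling \eqref{eq:bp_fringe_comp} first and then deriving the remaining three from it by limiting and set-theoretic arguments. For \eqref{eq:bp_fringe_comp}, the middle equality $(u\wedge v)\cap\BN = I^{\T}((t_u\wedge t_v)_{\ell})$ is exactly Proposition \ref{prop:tree_limits}\ref{item:MRCA_equals_fringe}, rewritten in the $I^{\T}$-notation (note $\{j\colon t_j\in\fringe{(t_u\wedge t_v)_\ell}{\T}\}=\{j\colon (t_u\wedge t_v)_\ell\in[[0,t_j]]_\ell\}=I^\T((t_u\wedge t_v)_\ell)$ by definition of the fringe subtree in $\ell_1$). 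For the left equality, I would argue directly on the sampled tree: $j\in I^{\cS}((s_u\wedge s_v)_{\cS})$ means $(s_u\wedge s_v)_{\cS}\in[[r,s_j]]_{\cS}$, and by the tree axioms in Definition \ref{def:real_tree} this happens if and only if the point separating $r,s_u,s_v$ also separates $r$ from $s_j$ on the same side as $s_u,s_v$, which is precisely the statement that $s_j$ lies in the fringe subtree $\fringe{(s_u\wedge s_v)_{\cS}}{\cS}$. The hierarchy block $(u\wedge v)_{\cH}$ is, by the very definition of the hierarchy derived by sampling from $(\cS,d,r,q)$, the set of indices $j$ with $s_j$ in the smallest fringe subtree containing both $s_u$ and $s_v$, which is $\fringe{(s_u\wedge s_v)_{\cS}}{\cS}$ up to the $q$-null possibility that $(s_u\wedge s_v)_{\cS}$ is itself one of the sampled leaves; almost surely the samples avoid the (at most countable, $q$-null) set of branch points, so the two descriptions agree.

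For \eqref{eq:sample_fringe_comp}, I would take an intersection/limit of \eqref{eq:bp_fringe_comp} over $v$: the chain of fringe subtrees $\fringe{(s_u\wedge s_v)_{\cS}}{\cS}$ as $v$ ranges over $\BZ\setminus\{u\}$ is decreasing along the spine $[[r,s_u]]_{\cS}$, and by Lemma \ref{lem:span_samp_approx} the branch points $(s_u\wedge s_v)_{\cS}$ approach $s_u$, so $\bigcap_v\fringe{(s_u\wedge s_v)_{\cS}}{\cS} = \fringe{s_u}{\cS}$ and hence $\bigcap_{k\neq u}(u\wedge k) = I^{\cS}(s_u)\cap\BN$; the analogous statement on the $\ell_1$ side, $\bigcap_{k\neq u}(u\wedge k) = I^{\T}(t_u)$, comes the same way from \eqref{eq:bp_fringe_comp} together with the observation that the points $(t_u\wedge t_k)_{\ell}$ increase to $t_u$ along $[[0,t_u]]_{\ell}$ (this uses that $(\T,\ell_1,0,p)$ is an IP tree spanned by the samples, i.e.\ Proposition \ref{prop:its_a_bead_crush} and the Spanning property, so that $t_u$ is approximated by the MRCA points just as in Lemma \ref{lem:span_samp_approx}). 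For \eqref{eq:same_bp_isom}, observe that $(s_u\wedge s_v)_{\cS} = (s_i\wedge s_j)_{\cS}$ holds iff the two fringe subtrees $\fringe{(s_u\wedge s_v)_{\cS}}{\cS}$ and $\fringe{(s_i\wedge s_j)_{\cS}}{\cS}$ coincide — again almost surely, using that distinct special points are separated by samples via Lemma \ref{lem:span_samp_approx} — which by \eqref{eq:bp_fringe_comp} is the event $(u\wedge v)\cap\BN = (i\wedge j)\cap\BN$; this same event is, by \eqref{eq:bp_fringe_comp} applied on the $\T$ side, the event $I^{\T}((t_u\wedge t_v)_{\ell}) = I^{\T}((t_i\wedge t_j)_{\ell})$, which equals $\{(t_u\wedge t_v)_{\ell} = (t_i\wedge t_j)_{\ell}\}$ since in the IP tree $\T$ a fringe subtree determines its root. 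Finally \eqref{eq:same_sample_isom} follows from \eqref{eq:sample_fringe_comp} in exactly the same way, since $s_u = s_v$ iff $\fringe{s_u}{\cS} = \fringe{s_v}{\cS}$ iff $I^{\cS}(s_u)\cap\BN = I^{\cS}(s_v)\cap\BN$ (a.s.), and likewise $t_u = t_v$ iff $I^{\T}(t_u) = I^{\T}(t_v)$.

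The main obstacle I anticipate is the careful bookkeeping of null events: each identity is only claimed to hold almost surely, and the exceptional sets come from two recurring sources — samples landing exactly on branch points or isolated leaves (handled because these form an at most countable, hence $q$-null, set), and the approximation in Lemma \ref{lem:span_samp_approx} failing — so I would want to fix, once and for all, a single almost-sure event on which Lemma \ref{lem:span_samp_approx} holds for all $j$ and $\epsilon$ rational, all samples are distinct from all MRCA branch points, and the limits in Proposition \ref{prop:spinal}\ref{item:spinal:def} and Proposition \ref{prop:tree_limits} all exist, and then argue deterministically on that event. A secondary subtlety is making precise the claim ``a fringe subtree determines its root'' in both $\cS$ and $\T$: in $\T$ this needs the Spanning property (a fringe subtree with nonempty sample set has a well-defined lowest point, which is approximated by samples), and in $\cS$ it needs that $(s_u\wedge s_v)_{\cS}$ is in the closed support of $q$, which is why one restricts attention to the special points and uses Lemma \ref{lem:span_samp_approx}. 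Neither difficulty is conceptually deep, but getting the quantifiers over $i,j,u,v$ and over the null sets in the right order is where the real work lies.
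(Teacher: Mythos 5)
Your proposal follows essentially the same route as the paper: identify $(u\wedge v)_n$ with $[\pm n]\cap I^{\cS}((s_u\wedge s_v)_{\cS})$ directly from the definition of the derived hierarchy, quote Proposition \ref{prop:tree_limits}\ref{item:MRCA_equals_fringe} for the $\T$ side, obtain \eqref{eq:sample_fringe_comp} by intersecting over $k$ via Lemma \ref{lem:span_samp_approx} (applied to both trees, using that $p$ drives the $(t_j)$), and deduce \eqref{eq:same_bp_isom} and \eqref{eq:same_sample_isom} from mutual containment of the index sets. One caveat: your claim that the samples a.s.\ avoid the branch points because these form an ``at most countable, hence $q$-null'' set is false --- $q$ may have atoms located at branch points, a case the paper explicitly handles in the proof of Proposition \ref{prop:MSE_to_constr_tree} --- but the remark is also unnecessary, since $(u\wedge v)_n = [\pm n]\cap I^{\cS}((s_u\wedge s_v)_{\cS})$ holds exactly, with no exceptional event, because the intersection of all fringe subtrees containing both $s_u$ and $s_v$ is precisely $\fringe{(s_u\wedge s_v)_{\cS}}{\cS}$.
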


\begin{proof}
 \eqref{eq:bp_fringe_comp}: Note that for $u,v\in\BN$ distinct and $n> u,v$,
 \begin{equation*}
 \begin{split}
  (u\wedge v)_{n} = \bigcap_{A\in\cH_n\colon u,v\in A} A
  	&= \bigcap_{x\in\cS\colon s_u,s_v\in\fringe{x}{\cS}}\left( [\pm n]\cap I^{\cS}(x) \right)\\
  	&= [\pm n]\cap I^{\cS}((s_u\wedge s_v)_{\cS}),
 \end{split}
 \end{equation*}
 where the first equation is Definition \ref{def:MRCA} of the MRCA, the second follows from the definition of $\cH_n$ via the samples $(s_j)$, and the last follows because every fringe subtree containing both $s_u$ and $s_v$ must contain the branch point $(s_u\wedge s_v)_{\cS}$. This proves the first equation in \eqref{eq:bp_fringe_comp}. The second has already been established in Proposition \ref{prop:tree_limits}\ref{item:MRCA_equals_fringe}.

 \eqref{eq:sample_fringe_comp}: By Lemma \ref{lem:span_samp_approx},
 \begin{gather*}
  \fringe{s_u}{\cS} = \bigcap_{k\in\BZ\setminus\{u\}} \fringe{(s_u\wedge s_k)_{\cS}}{\cS};\\
  \text{thus,} \quad
  I^{\cS}(s_u)\cap\BN = \BN\cap\bigcap_{k\in\BZ\setminus\{u\}} I^{\cS}((s_u\wedge s_k)_{\cS}) = \BN\cap\bigcap_{k\in\BZ\setminus\{u\}} (u\wedge k)_{\cH},
 \end{gather*}
 with the last equation following from \eqref{eq:bp_fringe_comp}. By Proposition \ref{prop:tree_limits}\ref{item:driving}, the $(t_i,\,i\ge1)$ have $p$ as their driving measure, so the same argument via Lemma \ref{lem:span_samp_approx} applies to $I^{\T}(t_u)$, thus proving \eqref{eq:sample_fringe_comp}.
 
 \eqref{eq:same_bp_isom}: Note that $(s_i\wedge s_j)_{\cS} = (s_u\wedge s_v)_{\cS}$ if and only if both $i,j\in I^{\cS}((s_u\wedge s_v)_{\cS})$ and $u,v\in I^{\cS}((s_i\wedge s_j)_{\cS})$. The corresponding claim holds for samples in $\cT$. Thus, \eqref{eq:same_bp_isom} follows from \eqref{eq:bp_fringe_comp}.
 
 \eqref{eq:same_sample_isom}: Note that $s_u = s_v$ if and only if both $v\in I^{\cS}(s_u)$ and $u\in I^{\cS}(s_v)$. The corresponding claim holds for $t_u$ and $t_v$. Thus, \eqref{eq:same_sample_isom} follows from \eqref{eq:sample_fringe_comp}.
\end{proof}

\subsection{Proof of Proposition \ref{prop:MSE_to_constr_tree}}\label{sec:MSE_to_constr_tree}

We know from Theorem \ref{thm:hier_rep_constr} that $\Theta(\T,\ell_1,0,p) = \Theta(\cS,d,r,q)$ a.s.. Thus, it suffices to show that these two trees are a.s.\ mass-structurally equivalent. First, we will define a function $\phi$ mapping the special points of $\cS$, in the sense of Definition \ref{def:special_pts}, to those of $\cT$, and we show that it is a bijection. Then we will show that $\phi$ is mass and structure preserving.
 
 Recall that, by Proposition \ref{prop:its_a_bead_crush}, $(\cT,\ell_1,0,p)$ is an IP tree. In particular, it possesses the Spanning property, $\cT = \Span(p)$.\smallskip
 
 \textbf{Definition of a bijection, $\phi$}. Recall from Definition \ref{def:special_pts} that there are three types of special points: locations of atoms, branch points of the subtree spanned by the measure, and isolated leaves of said subtree. Therefore, we define a bijection $\phi$ in these three cases.
 
 \ref{item:special_atoms} If $y$ is the location of an atom of $q$ then there is a.s.\ some $i$ for which $s_i = y$. We define $\phi(y) := t_i$. By \eqref{eq:same_sample_isom}, it is a.s.\ the case that $t_j = t_i$ if and only if $s_j = s_i$, for $j\ge 1$, so this is well-defined. Moreover, we conclude from Proposition \ref{prop:tree_limits}\ref{item:driving} that $t_i$ is the location of an atom in $p$ with $p\{t_i\} = q\{s_i\}$. By the preceding argument, $\phi$ is injective from atoms of $q$ to those of $p$. The same argument in reverse shows that it bijects these sets of atoms.
 
 \ref{item:special_BPs} If $x$ is a branch point of $\Span(q)$, in the sense of Definition \ref{def:special_pts}, then there is a.s.\ some pair $i,j\in\BN$ for which $x = (s_i\wedge s_j)_{\cS}$ with $x\neq s_i$ and $x\neq s_j$. In particular, $s_i\notin [[r,s_j]]_{\cS}$ and vice versa. By \eqref{eq:sample_fringe_comp}, $t_i\notin [[r,t_j]]_{\cS}$ and vice versa, so $(t_i\wedge t_j)_{\ell}$ is a branch point of $\cT = \Span(p)$. And by \eqref{eq:same_bp_isom}, the vertex $(t_i\wedge t_j)_{\ell}$ is a.s.\ the same across all pairs $i,j$ for which $x = (s_i\wedge s_j)_{\cS}$. We define $\phi(x) := (t_i\wedge t_j)_{\ell}$. By this same argument in reverse, starting with a branch point of $\T$, we see that $\phi$ bijects the branch points of $\Span(q)$ with those of $\T$.
 
 In the special case that $q$ has an atom located at the branch point $x$, this agrees with our previous definition of $\phi$ for atoms. In this case, there exist samples $s_u = s_v = x$ with $u\neq v$. Then $(s_i\wedge s_j)_{\cS} = x = s_u = (s_u\wedge s_v)_{\cS}$. By \eqref{eq:same_bp_isom} this means $(s_i\wedge s_j)_{\cS} = (t_u\wedge t_v)_{\ell}$, and by \eqref{eq:same_sample_isom}, $t_u = t_v$. Then we conclude $\phi(x) = (t_i\wedge t_j)_{\ell} = t_u$.
 
 \ref{item:special_leaves} Now suppose $z\in\cS$ is an isolated leaf of $\Span(q)$, in the sense that there is a non-trivial segment $[[x,z]]_{\cS} \subseteq [[r,z]]_{\cS}$ that contains no branch points of $\Span(q)$ and every such segment has positive mass under $q$. Consider
 \begin{equation}\label{eq:iso_leaf_to_samples}
  J := \left\{ i\ge 1\ \middle|\ \forall j\in I^{\cS}(s_i),\; z\in \fringe{s_j}{\cS} \right\}.%\\[3pt]
 \end{equation}
 This is the set of indices of all samples that lie on a branch with the properties mentioned above. 
 The samples $(s_i,\,i\in J)$ all lie along $[[r,z]]_{\cS}$, and they are totally ordered, up to equality, along this segment. Since $z$ is in the closed support of $q$, it is the unique limit point of this set at maximal distance from $r$. By \eqref{eq:sample_fringe_comp}, the samples $(t_i,\,i\in J)$ are correspondingly totally ordered along a segment. As $\cT$ is bounded and complete under $\ell_1$, these samples also have a unique limit point $z'\in\cT$ at maximal distance from $0$. We define $\phi(z) := z'$.
 
 %Consider the natural partial ordering $\cS$, $a\preceq b$ if $a\in [[r,b]]_{\cS}$. This totally orders the samples $(s_i,\,i\in J)$, up to equality. Because the isolated leaf $z$ is in the closed support of $q$, it is the least upper bound of this set. It follows from \eqref{eq:sample_fringe_comp} that the samples $(t_i,\,i\in J)$ are correspondingly totally ordered along a segment. %%Note that $q\left(\fringe{y}{\cS}\right)$ decreases left-continuously as $y\in [[x,z]]_{\cS}$ progresses towards $y=z$. Thus,
 %%$$q\{z\} = q\left(\fringe{z}{\cS}\right) = \sup_{i\in J}q\left(\fringe{s_i}{\cS}\right) = \sup_{i\in J}p\left(\fringe{t_i}{\cT}\right),$$
 %%with the last equation following from \eqref{eq:sample_fringe_comp}. By the Spacing property, we conclude that the samples $\{t_i,\,i\in J\}$ have a least upper bound $z'$ that is a leaf of $\cT$ sitting at distance $1-q\{z\}$ from the origin. We define $\phi(z) := z'$.
 %Since $(\cT,\ell_1)$ is bounded and closed, these samples have a least upper bound $z'\in \cT$. We define $\phi(z) := z'$.
 
 To show that this is a bijection between the sets of isolated leaves, we consider properties of the set $J$. It a.s.\ satisfies:\vspace{-2pt}
 \begin{enumerate}[label=(\roman*), ref=(\roman*)]
  \item $\forall i\in J,\ \BN\cap I^{\cS}(s_i) \subseteq J$ and
  \item $\forall i,j\in J,\ j\in I^{\cS}(s_i) \text{ and/or }i\in I^{\cS}(s_j).$\vspace{-2pt}
 \end{enumerate}
 %\begin{equation}\label{eq:iso_leaf_sample_set_S}
 % \forall i\in J,\ \BN\cap I^{\cS}(s_i) \subseteq J \quad \text{and}\quad \forall i,j\in J,\ j\in I^{\cS}(s_i) \text{ and/or }i\in I^{\cS}(s_j).
 %\end{equation}
%  \begin{array}{c}
    %J = \left\{ i\ge 1\ \middle|\ \emptyset \neq \left(\left(\BN\cap I^{\cS}(s_i)\right)\setminus\{i\}\right)\subseteq J \right\}\\[3pt]
    %\displaystyle \text{and for all } i,j\in J\text{ we have }j\in I^{\cS}(s_i) \text{ and/or }i\in I^{\cS}(s_j).
%  \end{array}
 Condition (i) asserts, roughly, that $J$ comprises indices of all samples that fall into some fringe subtree $B\subseteq\cS$. Condition (ii) asserts that these samples are totally ordered, up to equality, along a branch going away from $r$. I.e.\ the support of $q$ on $B$ is contained within a single segment aligned with $r$. By its definition, $J$ is maximal with these two properties. If we view \eqref{eq:iso_leaf_to_samples} as a map sending $z$ to $J$, then this is a bijection from isolated leaves of $\Span(q)$ to maximal sets of indices $J$ that satisfy properties (i) and (ii) above. Likewise,
 \begin{equation*}%\label{eq:iso_leaf_to_samples}
  z' \mapsto \left\{ i\ge 1\ \middle|\ \forall j\in I^{\cT}(t_i),\; z'\in \fringe{t_j}{\cT} \right\}.
 \end{equation*}
 is a bijection from isolated leaves of $\cT$ to maximal sets $J$ satisfying:\vspace{-2pt}
 \begin{enumerate}[label=(\roman*'), ref=(\roman*')]
  \item $\forall i\in J,\ I^{\cT}(t_i) \subseteq J$ and
  \item $\forall i,j\in J,\ j\in I^{\cT}(t_i) \text{ and/or }i\in I^{\cT}(t_j)$.\vspace{-2pt}
 \end{enumerate}
%  \begin{array}{c}
%   \displaystyle J = \left\{ i\ge 1\ \middle|\ \emptyset \neq \left(I^{\cT}(t_i)\setminus\{i\}\right)\subseteq J \right\}\\[3pt]
%   \displaystyle \text{and for all } i,j\in J\text{ we have }j\in I^{\cT}(t_i) \text{ and/or }i\in I^{\cT}(t_j).
%  \end{array}
% \begin{equation}\label{eq:iso_leaf_sample_set_T}
%  \forall i\in J,\ I^{\cT}(t_i) \subseteq J \quad \text{and}\quad \forall i,j\in J,\ j\in I^{\cT}(t_i) \text{ and/or }i\in I^{\cT}(t_j).
% \end{equation}
 Finally, by \eqref{eq:sample_fringe_comp}, conditions (i) and (ii) are equivalent to (i') and (ii'). Therefore, $\phi$ bijects the isolated leaves of $\Span(q)$ with those of $\cT$.
 
 In the special case that $q$ has an atom at $z$, this again agrees with our previous definition of $\phi$ for atoms. In this case, there exists some $i$ with $s_i = z$. Since $z$ is a leaf of $\Span(q)$, $i\in J$ and $s_i$ is the least upper bound of samples $(s_j,\,j\in J)$. By \eqref{eq:sample_fringe_comp}, $t_i$ is then the least upper bound of samples $(t_j,\,j\in J)$. Thus, $\phi(z) = t_i$, as desired.\smallskip
 
 \textbf{Mass preserving}. We have already established that $q\{x\} = p\{\phi(x)\}$ for all points $x\in\cS$ at which $q$ has atoms, and that $\phi$ bijects the locations of atoms of $q$ with those of $p$.
 
 For $j\ge 1$, it is a.s.\ the case that
% \begin{equation*}
% \begin{split}
%  \{i\ge 1\colon s_i\in [[r,s_j]]_{\cS}\} &= \{i\ge 1\colon j\in \cI^{\cS}(i)\}\\
%  		&= \{i\ge 1\colon j\in \cI^{\cT}(i)\} = \{i\ge 1\colon t_i\in [[0,\phi(t_j)]]_{\ell}\}
% \end{split}
% \end{equation*}
 \begin{equation*}
 \begin{split}
  q([[r,s_j]]_{\cS}) &= \lim_{n\to\infty} \frac{\#\{i\in [n]\colon s_i\in [[r,s_j]]_{\cS}\}}{n}\\
  		&= \lim_{n\to\infty} \frac{\#\{i\in [n]\colon j\in \cI^{\cS}(i)\}}{n} = \lim_{n\to\infty}\frac{\#\{i\in [n]\colon j\in \cI^{\cT}(i)\}}{n}\\
  		&= \lim_{n\to\infty} \frac{\#\{i\in [n]\colon t_i\in [[r,t_j]]_{\ell}\}}{n} = p([[0,t_j]]_{\ell}),
 \end{split}
 \end{equation*}
 with the first and last equations a consequence of $q$ and $p$ being driving measures for the $(s_i)$ and $(t_i)$, respectively; the second and fourth following from the definition of fringe subtrees; and the third following from \eqref{eq:sample_fringe_comp}. An analogous derivation, making use of \eqref{eq:bp_fringe_comp} in place of \eqref{eq:sample_fringe_comp}, shows that $q([[r,(s_i\wedge s_j)_{\cS}]]_{\cS}) = p([[0,(t_i\wedge t_j)_{\ell}]]_{\ell})$. This proves that $q([[r,x]]_{\cS}) = p([[0,\phi(x)]]_{\ell})$ when $x$ is the location of an atom of $q$ or a branch point of $\Span(q)$. Finally, the map $x\mapsto q([[r,x]]_{\cS})$ is continuous at points $x$ that are neither branch points nor locations of atoms of $q$, and correspondingly for $p$. Thus, by passing through a limit with samples converging to an isolated leaf, the result also holds when $x$ is an isolated leaf of $\Span(q)$.
 
 If $z$ is an isolated leaf of $q$ at which there is no atom, then 
 $q\left(\fringe{z}{\cS}\right) = 0 = p\left(\fringe{\phi(z)}{\cT}\right)$. Finally, for $y$ a branch point of $\Span(q)$ or the location of an atom of $q$, we can write $y = (s_i\wedge s_j)_{\cS}$ for some $1\le i<j$. Then, by \eqref{eq:bp_fringe_comp},
 \begin{equation*}
 \begin{split}
  q\left(\fringe{(s_i\wedge s_j)_{\cS}}{\cS}\right) &= \lim_{n\to\infty} n^{-1}\# \left( I^{\cS}((s_i\wedge s_j)_{\cS}) \cap [n] \right)\\
  		&= \lim_{n\to\infty} n^{-1}\# \left( I^{\cT}((t_i\wedge t_j)_{\ell}) \cap [n] \right) = p\left(\fringe{(t_i\wedge t_j)_{\ell}}{\cT}\right),
 \end{split}
 \end{equation*}
 as desired.\smallskip
 
 %NOTE: for the struct preserving arguments, need to clarify what's being treated as event vs.\ proposition, is there conditioning, what's going on?
 
 \textbf{Structure preserving}. We must confirm that structure is preserved, in the sense of Definition \ref{def:mass_struct}\ref{item:m_s:s}, between any two special points in $\cS$. Again, we approach this case-by-case for the different types of special points.
 
 For branch points $y_1$ and $y_2$ of $\Span(q)$, we have $y_1 = (s_i\wedge s_j)_{\cS}$ and $y_2 = (s_u\wedge s_v)_{\cS}$ for some $i,j,u,v\in\BN$. Then by \eqref{eq:bp_fringe_comp} and the definition of $(a\wedge b)_{\cS}$,
 \begin{equation*}
 \begin{split}
  (s_i\wedge s_j)_{\cS} \in [[r,(s_u\wedge s_v)_{\cS}]]_{\cS}\ &\iff\ u,v\in I^{\cS}((s_i\wedge s_j)_{\cS})\\
  		&\iff\ u,v\in I^{\cT}((t_i\wedge t_j)_{\ell})\\
  		&\iff\ (t_i\wedge t_j)_{\ell}\in [[0,(t_u\wedge t_v)_{\ell}]]_{\ell}.
 \end{split}
 \end{equation*}
 The same argument shows that $\phi$ preserves structure between two locations of atoms $x_1,x_2$, or between a branch point and an atom, by taking $s_i = s_j = x_1$ for some pair $i\neq j$ so that $(s_i\wedge s_j)_{\cS} = x_1$, and correspondingly for $x_2$.
 
 If $z_1$ and $z_2$ are both isolated leaves of $\Span(q)$ then $z_1\notin [[r,z_2]]_{\cS}$ and $z_2\notin [[r,z_1]]_{\cS}$, since both are leaves of the same tree, and likewise for $\phi(z_1)$ and $\phi(z_2)$. Thus, structure is preserved here as well.
 
 Finally, suppose that $z$ is an isolated leaf of $\Span(q)$ with $q\{z\} = 0$ and $x$ is either the location of an atom of $q$ or a branch point of $\Span(q)$. In either case, $x = (s_i\wedge s_j)_{\cS}$ for some distinct $i,j\in\BN$. We cannot have $z\in [[r,x]]_{\cS}$, nor can we have $\phi(z)\in [[0,\phi(x)]]_{\ell}$, since $z$ and $\phi(z)$ are leaves and do not equal $x$ or $\phi(x)$, respectively. Let $J$ be as in \eqref{eq:iso_leaf_to_samples}. Then
 \begin{equation*}
 \begin{split}
  (s_i\wedge s_j)_{\cS} \in [[r,z]]_{\cS}\ &\iff\ I^{\cS}((s_i\wedge s_j)_{\cS}) \cap J\neq \emptyset\\
  		&\iff\ I^{\cT}((t_i\wedge t_j)_{\ell}) \cap J\neq \emptyset\ \iff\ (t_i\wedge t_j)_{\ell}\in [[0,\phi(z)]]_{\ell}.
 \end{split}
 \end{equation*}
 Thus, $\phi$ preserves structure between isolated leaves of $\Span(q)$ and other special points. \qed

\subsection{Proof of Proposition \ref{prop:IP_tree_MSE}}\label{sec:IP_tree_MSE}

 Let $(\cT_i,d_i,r_i,p_i)$ for $i=1,2$ be a pair of IP trees, with special point sets $\scS_1$ and $\scS_2$ and $\phi\colon \scS_1\to\scS_2$ a mass-structural isomorphism. We begin with a pair of observations.
 
 First, the roots $r_1$ and $r_2$ need not be special points. However, for $x\in\scS_1$,
 \begin{equation}
  d_2(r_2,\phi(x)) = 1-p_2\left(\fringe{\phi(x)}{\cT_2}\right) = 1-p_1\left(\fringe{x}{\cT_1}\right) = d_1(r_1,x),
 \end{equation}
 by the Spacing properties of the two IP trees and the mass preserving property of $\phi$. Taking $x = r_1$ or $\phi(x) = r_2$ shows that $r_1$ is a special point if and only if $r_2$ is, in which case $\phi(r_1) = \phi(r_2)$. If they are not special points, then we define $\phi(r_1) := r_2$.
 
 Second, since $\scS_1$ and $\scS_2$ contain all branch points of the two trees, it follows from the structure preserving property that $\phi((x\wedge y)_{\cT_1}) = (\phi(x),\phi(y))_{\cT_2}$ for every $x,y\in\scS_1$. Thus,
 \begin{equation*}
 \begin{split}
  d_1(x,y) &= d_1\big(x,(x\wedge y)_{\cT_1})\big) + d_1\big((x\wedge y)_{\cT_1}),y\big)\\
  		&= 2p\big( \fringe{(x\wedge y)_{\cT_1}}{\T_1} \big) - p\big( \fringe{x}{\T_1} \big) - p\big( \fringe{y}{\T_1} \big)\\
  		&= 2p\big( \fringe{\phi((x\wedge y)_{\cT_1})}{\T_2} \big) - p\big( \fringe{\phi(x)}{\T_2} \big) - p\big( \fringe{\phi(y)}{\T_2} \big)\\
  		&= d_2\big(\phi(x),\phi((x\wedge y)_{\cT_1})\big) + d_2\big(\phi((x\wedge y)_{\cT_1}),\phi(y)\big) = d_2(\phi(x),\phi(y)),
 \end{split}
 \end{equation*}
 where the second and fourth lines follow from the Spacing properties of $\T_1$ and $\T_2$ and the third is an application of the mass preserving property of $\phi$.  In other words, $\phi$ is an isometry from $(\scS_1\cup\{r_1\},d_1)$ to $(\scS_2\cup\{r_2\},d_2)$.
 
 We must show that the IP trees $(\cT_i,d_i,r_i,p_i)$ for $i=1,2$ are isomorphic. First, we will define a map $\psi\colon \cT_1\to\cT_2$ that preserves distance from the root; then, we show that $\psi$ is an isometry; and finally we prove that $\psi$ is measure-preserving.\smallskip
 
 \textbf{Definition of $\psi$}. We extend $\phi$ to define $\psi\colon \T_1 \to \cT_2$ by two mechanisms, which we call overshooting and approximation. Consider $z\in\cT_1\setminus\scS_1$.
 
 Case 1 (overshooting): $\fringe{z}{\cT_1}\cap\scS_1 \neq \emptyset$. Consider $x\in\fringe{z}{\cT_1}\cap\scS_1$. Define $\psi(z)$ to be the point along $[[r_2,\phi(x)]]_{\cT_2}$ at distance $d_1(r_1,z)$ from $r_2$. This definition does not depend on our choice of $x$: if $x_1,x_2\in \fringe{z}{\cT_1}\cap\scS_1$ then $x^* := (x_1\wedge x_2)_{\cT}\in \fringe{z}{\cT_1}\cap\scS_1$ as well. In that case, $d_1(r_1,z) < d_1(r_1,x^*) = d_2(r_2,\phi(x^*))$, and by the structure preserving property of $\phi$,
 $$[[r_2,\phi(x^*)]]_{\cT_2} = [[r_2,\phi(x_1)]]_{\cT_2}\cap [[r_2,\phi(x_2)]]_{\cT_2}.$$
 Thus, the points along $[[r_2,\phi(x_i)]]_{\cT_2}$ at distance $d_1(r_1,z)$ from $r_2$ are the same for $i=1,2$, as both lie in $[[r_2,\phi(x^*)]]_{\cT_2}$.
 
 Case 2 (approximation): $\fringe{z}{\cT_1}\cap\scS_1 = \emptyset$. Then there is no branch point, nor any isolated leaf of $\Span(p_1) = \cT_1$ beyond $z$. Thus, $z$ must be a leaf with a sequence of branch points $(x_i,\,i\ge1)$ converging to it along $[[r_1,z]]_{\cT_1}$. Moreover, since $z$ is a leaf and not the location of an atom, $d_1(r_1,z) = 1$ by the Spacing property. Since $\phi$ is an isometry, the sequence $(\phi(x_i),\,i\ge 1)$ is a Cauchy sequence in $d_2$, so it has a limit $z'$ with $d_2(r_2,z')=1$. We define $\phi(z) := z'$. Again, this is well-defined. If $(y_i,\,i\ge1)$ is another sequence of branch points converging to $z$, then so is $x_1,y_1,x_2,y_2,\ldots$, so the $\phi$-images of these sequences must have the same limit.
 
 Note that $\psi$ preserves distance from the root, by definition. Moreover, if $z$ is defined by approximation then
 \begin{equation}\label{eq:BP_before_def_by_approx}
  d_2(\phi(x),\phi(z)) = d_1(x,z) \qquad \text{for branch points }x\in [[r_1,z]]_{\cT_1}.
 \end{equation}
  %for branch points $x\in [[r_1,z]]_{\cT_1}$, we get $d_2(\phi(x),\phi(z)) = d_1(x,z)$.\smallskip
 
 \textbf{Isometry}. It follows from Lemma \ref{lem:span_samp_approx} and the definition above that $\psi$ is a surjection. The definition also implies that $\psi$ preserves distance from the root. Thus, to show that it is an isometry, it suffices to show that it preserves structure, in the sense that $\psi(x)\in [[r_2,\psi(y)]]_{\cT_2}$ if and only if $x\in [[r_1,y]]_{\cT_1}$. We consider two cases in which $x\in [[r_1,y]]_{\cT_1}$ and one in which $x\notin [[r_1,y]]_{\cT_1}$.
 %Consider $x,y\in\T_1$ distinct. We will show $d_2(\psi(x),\psi(y)) = d_1(x,y)$.
 
 Case A.I: $x\in [[r_1,y]]_{\cT_1}$ and $y\in [[r_1,z]]_{\cT_1}$ for some $z\in\scS_1$. Then both $\psi(x)$ and $\psi(y)$ lie on $[[r_2,\psi(z)]]_{\cT_2}$, at respective distances $d_1(r_1,x)$ and $d_1(r_1,y)$ from $r_2$. Since $d_1(r_1,x) \le d_1(r_1,y)$, we get $\psi(x) \in [[r_2,\psi(y)]]_{\cT_2}$, as desired.
 
 Case A.II: $x\in [[r_1,y]]_{\cT_1}$ and $\psi(y)$ is defined by approximation. This means that we can take $z\in [[r_1,y]]$ to be a branch point with $d_1(z,y) < d_1(x,y)/2$. Then $z$ must belong to $\fringe{x}{\cT_1}$, so by the definition of $\psi(x)$ by overshooting, $\psi(x)\in [[r_2,\psi(z)]]_{\cT_2}$.  %, as must $z' := (z\wedge y)_{\cT_1}$. Th
 Moreover,
 $$d_2(\psi(x),\psi(z)) = d_1(x,z) > d_1(z,y) = d_2(\psi(z),\psi(y)),$$
 with the first equation following from preservation of distance from the root, the inequality from our assumption that $d_1(z,y) < d_1(x,y)/2$, and the final equation from \eqref{eq:BP_before_def_by_approx}. The entire closed ball of radius $d_2(\psi(x),\psi(z))$ about $\psi(z)$ lies inside $\fringe{\psi(x)}{\cT_2}$. In particular, $\psi(y)\in\fringe{\psi(x)}{\cT_2}$, as desired.
 
 Case B: $x\notin [[r_1,y]]_{\cT_1}$ and $y\notin [[r_1,x]]_{\cT_1}$. We take up the case in which $\psi(x)$ is defined by overshooting and $\psi(y)$ by approximation; the other cases can be addressed similarly. Let $z$ be a special point in $\fringe{x}{\cT_1}$ and $z'$ a branch point in $[[r_1,y]]_{\cT_1}$ with $d_1(z',y) < d_1((x\wedge y)_{\cT_1},y) / 2$. Then $(z\wedge z')_{\cT_1} = (x\wedge y)_{\cT_1}$. Moreover, by the structure-preserving property of $\phi$,
 $$\psi((x\wedge y)_{\cT_1}) = \phi((z\wedge z')_{\cT_1}) = (\phi(z)\wedge \phi(z'))_{\cT_2} = (\psi(z)\wedge \psi(z'))_{\cT_2}.$$
 By definition,
 $$d_2(\psi(x),\psi(z)) = d_1(x,z) < d_1(z,(x\wedge y)_{\cT_1}) = d_2\big(\psi(z),\psi((x\wedge y)_{\cT_1})\big).$$
 Thus, $\psi(x)$ is in the component of $\fringe{\psi((x\wedge y)_{\cT_1})}{\cT_2}\setminus\{\psi((x\wedge y)_{\cT_1})\}$ that contains $\psi(z)$. Correspondingly,
 $$d_2(\psi(y),\psi(z')) = d_1(y,z') < d_1(z',(x\wedge y)_{\cT_1}) = d_2(z',\psi((z\wedge z')_{\cT_1})).$$
 Thus, $\psi(y)$ is in the component of $\fringe{\psi((x\wedge y)_{\cT_1})}{\cT_2}\setminus\{\psi((x\wedge y)_{\cT_1})\}$ that contains $\psi(z')$. We conclude that $\psi(x)\notin [[r_2,\psi(y)]]_{\cT_2}$ and vice versa, as desired.
 \smallskip
 
 \textbf{Measure-preserving}. The fringe subtrees of $\T_1$ comprise a $\pi$-system that generates the Borel $\sigma$ algebra on $\T_1$, and likewise for $\T_2$. Because $\psi$ is a root-preserving isometry, for $x\in\cT_1$, $\psi\big(\fringe{x}{\cT_1}\big) = \fringe{\psi(x)}{\cT_2}$. Thus, by a monotone class argument, it suffices to show that for every $x\in\cT_1$, $p_1\left( \fringe{x}{\cT_1} \right) = p_2\left( \fringe{\psi(x)}{\cT_2} \right)$. We argue this in four cases.
 
 Case 1: $x = r_1$. Then $p_1\left( \fringe{x}{\cT_1} \right) = 1 = p_2\left( \fringe{\psi(x)}{\cT_2} \right)$.
 
 Case 2: $x\in\scS_1$. Then $\psi(x) = \phi(x)$, and the desired equality is exactly the Mass preserving property of $\phi$.
 
 Case 3: $x$ is not special but is the limit of a sequence of special points $(x_i,\,i\ge 1)$ in $[[r_1,x]]_{\cT_1}\cup\fringe{x}{\cT_1}$. Then $x$ is neither a branch point nor the location of an atom, and likewise for $\psi(x)$, so
 $$p_1 \left(\fringe{x}{\cT_1}\right) = \lim_{i\to\infty}p_1 \left(\fringe{x_i}{\cT_1}\right) = \lim_{i\to\infty}p_2\left(\fringe{\psi(x_i)}{\cT_2}\right) = p_2\left(\fringe{\psi(x)}{\cT_2}\right).$$
 
 Case 4: $x$ is not special and is not a limit of special points. Then $x$ cannot be a leaf. Let $y$ and $z$ be the points closest to $x$ in $(\cl(\scS_1)\cup\{r_1\})\cap [[r_1,x]]_{\cT_1}$ and $\cl(\scS_1)\cap\fringe{x}{\cT_1}$, respectively. The map $w\mapsto p_1([[r_1,w]]_{\cT_1})$ is continuous except at locations of atoms of $p_1$, and correspondingly for $p_2$. By the Mass preserving property of $\phi$, the isometry property of $\psi$, and this continuity,
 \begin{equation*}
 \begin{split}
  M &:= p_1\big([[y,z]]_{\cT_1}\setminus\{z\}\big) = p_1([[r_1,z]]_{\cT_1}) - p_1\{z\} - p_1([[r_1,y]]_{\cT_1})\\
  		&\hphantom{:}= p_2\big([[r_2,\psi(z)]]_{\cT_2}\big) - p_2\{\psi(z)\} - p_2\big([[r_2,\psi(y)]]_{\cT_2}\big)\\
  		&\hphantom{:}= p_2\big([[\psi(y),\psi(z)]]_{\cT_2}\setminus\{\psi(z)\}\big)
 \end{split}
 \end{equation*}
 By the Spacing property, $d_1(y,z) = p_1\left(\fringe{y}{\cT_1}\right) - p_1\left(\fringe{z}{\cT_1}\right) \ge M$. % \begin{equation*}
%  p_1\big([[r_1,y]]_{\cT_1}\big) = p_2\big([[r_2,\psi(y)]]_{\cT_2}\big) =: M
% \end{equation*}
 Let $v$ be the point in $[[y,z]]_{\cT_1}$ at distance $M$ from $z$. Then the Spacing property of $\cT_1$ implies that $p_1$ is null on $[[y,v]]_{\cT_1}\setminus\{y\}$ and equals length measure on $[[v,z]]_{\cT_1}\setminus\{z\}$. Correspondingly, the Spacing property of $\T_2$ implies that $p_2$ is null on $[[\psi(y),\psi(v)]]_{\T_2}$ and equals length measure on $[[\psi(v),\psi(z)]]_{\cT_2}\setminus\{\psi(z)\}$. In particular,
 \begin{equation*}
 \begin{split}
  p_1(\fringe{x}{\cT_1}) &= p_1([[x,z]]_{\cT_1}\setminus\{z\}) + p_1(\fringe{z}{\cT_1})\\
  &= \min\{d_1(x,z),M\} + p_1\left(\fringe{z}{\cT_1}\right)\\
  &= \min\big\{d_2(\psi(x),\psi(z)),M\big\} + p_2\big(\fringe{\psi(z)}{\cT_2}\big)
  = p_2\big(\fringe{\psi(x)}{\cT_2}\big).\!\!\!\!\!\qed
 \end{split}
 \end{equation*}
 
 %there is a unique special point $y\in [[r_1,x]]_{\cT_1}$ that is closest to $x$. If $p\left(\fringe{x}{\cT_1}\right)$
 %there is a unique special point $y\in \fringe{x}{\cT_1}$ that is closest to $x$.
%\qed

\section{Proofs of theorems}\label{sec:thm_pfs}

\begin{proof}[Proof of Theorem \ref{thm:MS_rep}]
 Consider a rooted, weighted $\BR$-tree $(\cT,d,r,p)$. By Proposition \ref{prop:MSE_to_constr_tree}, it is mass-structurally equivalent to at least one IP tree. By Proposition \ref{prop:IP_tree_MSE}, all such IP trees are isomorphic to each other.
\end{proof}

\begin{proof}[Proof of Theorem \ref{thm:bead_crush_rep}]
 Consider an IP tree $(\cT,d,r,p)$. By Proposition \ref{prop:MSE_to_constr_tree}, there exists an IP tree arising from a deterministic bead crushing construction that is mass-structurally equivalent to $(\cT,d,r,p)$. By Proposition \ref{prop:IP_tree_MSE}, the two IP trees are thus isomorphic.
\end{proof}

\begin{proof}[Proof of Theorem \ref{thm:measure_decomp}]
 By Theorem \ref{thm:bead_crush_rep}, it suffices to prove this theorem for IP trees that arise from bead crushing constructions. Consider such an IP tree $(\T,\ell_1,0,p)$ constructed, as in Section \ref{sec:bead_crush}, from a sequence of uniformized probability measures $q_n$, $n\ge 1$. We need only show that the restriction of the non-atomic component of $p$ to the skeleton of $\T$ equals the restriction of the length measure to a subset of the skeleton.
 
 In Section \ref{sec:bead_crush}, in between \eqref{eq:bead_crush} and Proposition \ref{prop:bead_crush_IP}, we note that the sequence of measures $(p_n,\,n\ge1)$ arising in the construction is projectively consistent, $p_n = \pi_n(p_{n+1})$ for $n\ge 1$, and we define the limiting measure $p$ via the Daniell-Kolmogorov extension theorem. The skeleton of the tree contains only points in $\ell_1$ with finitely many positive coordinates. Thus, the diffuse component of the measure on the skeleton, $p^s$, is the sum over $n$ of the diffuse measure on the $n^{\text{th}}$ branch added in the construction. To get the diffuse measure on such a branch, the construction takes the diffuse component of $q_n$ -- call it $q_n^d$ -- scales down both its total mass and the length of the segment supporting it by some factor $a_n\in (0,1)$, i.e.\ $q_n^d \mapsto a_nq^d_n(\,\cdot\,/a_n)$, and it transposes the measure from the line segment to the branch in $\cT$. Now, the theorem follows from Lemma \ref{lem:uniformization}, which states that $q_n^d$ is the restriction of Lebesgue measure to a subset of $[0,1]$.
\end{proof}

\begin{proposition}\label{prop:Theta_isom}
 Two IP trees $(\cT_i,d_i,r_i,p_i)$, $i=1,2$, are isomorphic if and only if $\Theta(\cT_1,d_1,r_1,p_1) = \Theta(\cT_2,d_2,r_2,p_2)$.
\end{proposition}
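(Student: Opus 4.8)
The plan is to read this off from the two key propositions of Section~\ref{sec:key_props}, together with the elementary fact that $\Theta$ is constant on isomorphism classes of rooted, weighted $\BR$-trees. For the ``only if'' direction, suppose $(\cT_1,d_1,r_1,p_1)$ and $(\cT_2,d_2,r_2,p_2)$ are isomorphic via a root- and weight-preserving isometry $\iota$. Then $\iota$ pushes i.i.d.\ $p_1$-samples forward to i.i.d.\ $p_2$-samples, and, being a root-preserving isometry, it carries each fringe subtree $\fringe{x}{\cT_1}$ exactly onto $\fringe{\iota(x)}{\cT_2}$; hence the hierarchies on $\BN$ produced by sampling can be coupled to be equal, and $\Theta(\cT_1,d_1,r_1,p_1) = \Theta(\cT_2,d_2,r_2,p_2)$. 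This was already noted in the paragraph preceding Theorem~\ref{thm:MS_Theta}, so I would simply cite it.

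For the ``if'' direction, assume $\Theta(\cT_1,d_1,r_1,p_1) = \Theta(\cT_2,d_2,r_2,p_2) =: \cL$. The idea is that the construction of Section~\ref{sec:tree_construction}, applied to a hierarchy of law $\cL$, produces a random IP tree that realizes the mass-structure of $\cT_1$ and of $\cT_2$ simultaneously. Concretely, the output $(\T,\ell_1,0,p)$ of that construction is a measurable deterministic function of the hierarchy fed in, so its law depends only on $\cL$, and in particular is the same whether we regard $\cL$ as $\Theta(\cT_1,d_1,r_1,p_1)$ or as $\Theta(\cT_2,d_2,r_2,p_2)$. Applying Proposition~\ref{prop:MSE_to_constr_tree} with $(\cS,d,r,q)=(\cT_1,d_1,r_1,p_1)$ shows that this law is supported on IP trees mass-structurally equivalent to $\cT_1$; applying it with $(\cS,d,r,q)=(\cT_2,d_2,r_2,p_2)$ shows it is also supported on IP trees mass-structurally equivalent to $\cT_2$. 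These two full-measure sets therefore have a common element: a single IP tree $\cU$ that is mass-structurally equivalent to both $\cT_1$ and $\cT_2$. Since mass-structural equivalence is an equivalence relation, $\cT_1$ and $\cT_2$ are mass-structurally equivalent to each other.

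Finally, $\cT_1$ and $\cT_2$ are both IP trees, so Proposition~\ref{prop:IP_tree_MSE} applies directly and yields that they are isomorphic, completing the proof. I do not expect a genuine obstacle here, since the substantive work is already packaged into Propositions~\ref{prop:MSE_to_constr_tree} and~\ref{prop:IP_tree_MSE}; the one point that deserves a sentence of care is the remark that the construction of Section~\ref{sec:tree_construction} yields a random IP tree whose law is a function of the input law alone, so that the two invocations of Proposition~\ref{prop:MSE_to_constr_tree} genuinely refer to the same random tree and their supports may legitimately be intersected.
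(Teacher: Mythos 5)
Your proposal is correct and follows essentially the same route as the paper: the paper likewise handles the forward direction by the remark preceding Theorem~\ref{thm:MS_Theta}, and for the converse it feeds a hierarchy with the common law into the construction of Section~\ref{sec:tree_construction} and invokes Proposition~\ref{prop:MSE_to_constr_tree} to conclude that the resulting random IP tree is a.s.\ mass-structurally equivalent to both $\cT_1$ and $\cT_2$, then applies Proposition~\ref{prop:IP_tree_MSE}. Your explicit remark about intersecting the two full-measure supports is exactly the (implicit) step the paper summarizes as ``all three IP trees are a.s.\ mass-structurally equivalent.''
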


\begin{proof}
 We have already mentioned, and it is trivial to check, that isomorphic trees have the same image under $\Theta$. Now, suppose that the two trees have the same image under $\Theta$. Let $(\cH_n,\,n\ge 1)$ be an exchangeable random hierarchy with law $\Theta(\cT_1,d_1,r_1,p_1)$. Let $(\cT,\ell_1,0,p)$ denote the random IP tree representation of $(\cH_n)$ obtained from the construction in Section \ref{sec:tree_construction}. By Proposition \ref{prop:MSE_to_constr_tree}, all three IP trees are a.s.\ mass-structurally equivalent. Then, by Proposition \ref{prop:IP_tree_MSE} they are a.s.\ isomorphic. In particular, the two deterministic trees must be isomorphic.
\end{proof}

\begin{proof}[Proof of Theorem \ref{thm:MS_Theta}]
 First, suppose that $(\cT_i,d_i,r_i,p_i)$, $i=1,2$, are two rooted, weighted $\BR$-trees with the same image as each other under $\Theta$. Then by the same argument as in the proof of Proposition \ref{prop:Theta_isom}, they must be mass-structurally equivalent to each other.
 
 Now, suppose instead that the two rooted, weighted $\BR$-trees are mass-structurally equivalent. By Proposition \ref{prop:MSE_to_constr_tree}, each tree $(\cT_i,d_i,r_i,p_i)$ is then mass-structurally equivalent to some IP tree $(\cS_i,\ell_1,0,q_i)$ for which $\Theta(\cT_i,d_i,r_i,p_i) = \Theta(\cS_i,\ell_1,0,q_i)$. By the transitivity of mass-structural equivalence, the two IP trees are mass-structurally equivalent. By Proposition \ref{prop:IP_tree_MSE}, that means the IP trees are isomorphic, so
 $$\Theta(\cT_1,d_1,r_1,p_1) = \Theta(\cS_1,\ell_1,0,q_1) = \Theta(\cS_2,\ell_1,0,q_2) = \Theta(\cT_2,d_2,r_2,p_2).\vspace{-21pt}$$
\end{proof}

\begin{proof}[Proof of Theorem \ref{thm:hierarchies_IP}]
 \ref{item:hIP:rcd} Let $(\cH_n,\,n\ge1)$ be an exchangeable random hierarchy. By Theorem \ref{thm:hier_rep_constr} and Proposition \ref{prop:its_a_bead_crush}, there exists a random IP tree $(\cT,\ell_1,0,p)$ with the property that $\Theta(\cT,\ell_1,0,p)$ is a r.c.d.\ for $(\cH_n,\,n\ge1)$ given its tail $\sigma$-algebra, $\tail(\cH_n)$. Since $\Theta(\cT,\ell_1,0,p)$ is $\tail(\cH_n)$-measurable, it follows from Proposition \ref{prop:Theta_isom} that the random isomorphism class $\scT$ of $(\cT,\ell_1,0,p)$ is as well. Then $\Theta(\scT) = \Theta(\cT,\ell_1,0,p)$ is a r.c.d.\ for $(\cH_n)$.
 
 \ref{item:hIP:bijection} Proposition \ref{prop:Theta_isom} states that the map $\Theta$ from isomorphism classes of IP trees to e.i.g.\ hierarchy laws is injective. By Theorem \ref{thm:hier_rep_constr} and Proposition \ref{prop:its_a_bead_crush}, every e.i.g.\ law is the $\Theta$ image of an IP tree, so it is also surjective.
\end{proof}

% Let $F(\,\cdot\,) := q_1[0,\cdot\,)$. We decompose this function into $F = F^a + F^d$, where $F^a(\,\cdot\,) = \sum_im_i\cf\{\.\cdot <y_i\}$ is monotone and purely jumping and $F^d$ is monotone and continuous. Let $S$ denote the closed support of $q_1$, or the set of points of increase of $F$.
% By the Spacing property of the IP tree $([0,L_n],d,0,q_n)$, if $F$ is increasing at some point $x$ -- i.e.\ if $x$ is in the closed support of, then the left-continuous version $F(x-) = x$. Thus,
% $$F^d(x) = y - F^a(y-),\quad \text{where} \quad y := \sup\{y\in [0,x]\colon F\text{ is increasing at }y\}.$$

\section{Complements}\label{sec:complements}

\subsection{Recovering the Brownian CRT from a Brownian IP tree}\label{sec:ATCRT_recovery}

In light of Theorem \ref{thm:MS_rep}, for each isomorphism class $\scT$ of rooted, weighted $\BR$-trees, there is a single isomorphism class $\scT'$ comprising the IP trees that are mass-structurally equivalent to the trees in $\scT$. Let $\Psi$ denote this map, from isomorphism classes of rooted, weighted $\BR$-trees to isomorphism classes of IP trees. This map is surjective but not injective. However, for certain interesting classes of CRTs $(\T,d,r,p)$, there exist sets $A$ such that: (i) $(\T,d,r,p)\in A$ a.s., and (ii) the restriction of $\Psi$ to isomorphism classes that intersect $A$ is injective. Property (ii) is equivalent to the condition that pairs of trees in $A$ are mass-structurally equivalent if and only if they are isomorphic. In particular, this holds for the Brownian CRT (and for all of the $(\alpha,\theta)$-trees of \cite{Ford05,PitmWink09}, though we will focus on the Brownian case). %, mentioned prior to Definition \ref{def:AT_IP_tree}.

\begin{proposition}\label{prop:BIPT}
 It is possible to construct a Brownian CRT and Brownian IP tree, in the sense of Definition \ref{def:AT_IP_tree}, defined on a common probability space, such that they are a.s.\ mass-structurally equivalent.
\end{proposition}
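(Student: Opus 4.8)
The plan is to recognise the Brownian IP tree inside the construction of Section \ref{sec:tree_construction} applied to a sampled Brownian CRT. Let $(\cS,d,r,\mu)$ be a Brownian CRT, let $(s_j,\,j\in\BZ)$ be i.i.d.\ $\mu$-samples, let $(\cH_n)$ be the hierarchy they determine, and let $(\T,\ell_1,0,p)$ be the random IP tree built from $(\cH_n)$ by the construction of Section \ref{sec:tree_construction}; all of these live on one probability space. Proposition \ref{prop:MSE_to_constr_tree} already gives that $(\T,\ell_1,0,p)$ is a.s.\ mass-structurally equivalent to $(\cS,d,r,\mu)$, so the only thing left is to show that $(\T,\ell_1,0,p)$ is a.s.\ a \emph{Brownian} IP tree in the sense of Definition \ref{def:AT_IP_tree}; then the pair $\big((\cS,d,r,\mu),(\T,\ell_1,0,p)\big)$ is as required.

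By Proposition \ref{prop:its_a_bead_crush}, $(\T,\ell_1,0,p)$ arises from a bead-crushing construction (possibly with pauses) with data $q_k$, attachment points $t^k_{k-1}$ and scalings $a_k$, and I would identify the joint law of this data by reading the proof of that proposition in the geometry of $\cS$ via Proposition \ref{prop:spinal}. There $X^{i}_j = 1-\mu\big(\fringe{(s_i\wedge s_j)_{\cS}}{\cS}\big)$, so the index set $S$ entering step $k$ is exactly $\{j\neq k-1\colon s_j\in\cB_k\}$, where $\cB_k\subseteq\cS$ is the bush that branches off the subtree spanned by $\{r,s_k,\dots,s_{-1}\}$ and contains $s_{k-1}$. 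Hence $a_k=\mu(\cB_k)$, which is the mass $m_k$ of the atom of $p_k$ at the attachment point, so $a_k = m_k$; a fresh $\mu$-sample lands in a given bush with probability equal to that bush's mass, so the attachment point is a size-biased atom of $p_k$; and, since $s_{k-1}$ is a.s.\ a leaf not lying on the finite skeleton spanned by $\{r,s_k,\dots,s_{-1}\}$, the bush $\cB_k$ is nondegenerate of positive mass, so $S$ is a.s.\ infinite and there are no pauses. Finally, for $j\in S$ the variable $Y_j$ of Proposition \ref{prop:its_a_bead_crush} equals $1-\overline{\mu}_k\big(\fringe{(s_{k-1}\wedge s_j)_{\cS}}{\cS}\big)$ with $\overline{\mu}_k:=\mu(\,\cdot\,\cap\cB_k)/\mu(\cB_k)$, so its driving measure $q_k$ is the uniformization of the projection of $\overline{\mu}_k$ onto the spine of $\cB_k$ running to $s_{k-1}$ (the computation of the base case of Proposition \ref{prop:its_a_bead_crush}, performed inside $\cB_k$). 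By the consistency of the recursive spinal decomposition of the Brownian CRT, conditionally on the data generated before step $k$ and on the identity of $\cB_k$, the tree $\cB_k$ rescaled to unit mass is again a Brownian CRT (with lengths scaled by $\sqrt{\mu(\cB_k)}$) with $s_{k-1}$ a fresh $\overline{\mu}_k$-distributed leaf; by the string-of-beads theorem of Pitman and Winkel \cite{PitmWink09}, the mass of that CRT projected onto the spine to a random leaf is, up to the length rescaling, a $\big(\frac12,\frac12\big)$-string of beads, i.e.\ built from $\texttt{PoiDir}\big(\frac12,\frac12\big)$ weights at i.i.d.\ uniform positions; and uniformization removes the rescaling. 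So $q_k$ is, conditionally on the past, distributed as the uniformization of a standard $\big(\frac12,\frac12\big)$-string, and hence the $q_k$ are i.i.d.\ with this law. With these facts, $(\T,\ell_1,0,p)$ meets Definition \ref{def:AT_IP_tree}, finishing the proof.

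The main obstacle is this last identification: one must justify that the bushes $\cB_k$ excised at successive steps are, conditionally on all data generated so far, genuine fresh rescaled Brownian CRTs carrying fresh independent leaves --- i.e.\ that the sequential, sample-driven exploration of $\cS$ creates no hidden dependence. This rests on the consistency of the spinal decomposition of the Brownian CRT under restriction to nested finite samples, together with the precise $\texttt{PoiDir}\big(\frac12,\frac12\big)$ law of bush masses along a spine to a $\mu$-random leaf; both are standard self-similarity properties of the Brownian CRT. A lighter secondary point is to confirm that the $\BZ$-bookkeeping of Section \ref{sec:tree_construction}, in which $s_0,s_1,\dots$ act as ``passenger'' samples and $s_{-1},s_{-2},\dots$ as branch-creators, does not disturb any of the above, which follows from exchangeability of $(s_j,\,j\in\BZ)$ and Proposition \ref{prop:tree_limits}. (One could alternatively couple directly, iterating the recursive spinal decomposition of $\cS$ and feeding the resulting i.i.d.\ strings of beads into the bead-crushing construction of Section \ref{sec:bead_crush}, verifying mass-structural equivalence at each finite stage and passing to the limit with the aid of Lemma \ref{lem:span_samp_approx}.)
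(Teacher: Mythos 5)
Your proposal is essentially correct, but it runs in the opposite direction from the paper's proof. You start from a Brownian CRT $(\cS,d,r,\mu)$, derive the hierarchy by sampling, let Proposition \ref{prop:MSE_to_constr_tree} hand you a mass-structurally equivalent IP tree $(\T,\ell_1,0,p)$ on the same probability space, and then do the real work of identifying the law of the derived bead-crushing data $(q_k,x_k,a_k)$ so that $(\T,\ell_1,0,p)$ matches Definition \ref{def:AT_IP_tree}. The paper instead builds \emph{both} trees forward from a single i.i.d.\ sequence of $\big(\frac12,\frac12\big)$-strings of beads: it runs the Pitman--Winkel bead-crushing recursion \eqref{eq:bead_crush_2} on the strings $q_n$ to produce the Brownian CRT, simultaneously runs the Section \ref{sec:bead_crush} recursion on their uniformizations $q_n'$ (crushing the corresponding atoms) to produce the Brownian IP tree, and checks inductively that each finite stage admits a mass-structural isomorphism, which passes to the limit. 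The trade-off is clear: the paper's route gets the laws for free by construction and only cites Pitman--Winkel in the forward direction (i.i.d.\ strings produce a Brownian CRT), at the cost of an explicit finite-stage equivalence argument; your route gets mass-structural equivalence for free from Proposition \ref{prop:MSE_to_constr_tree}, at the cost of the ``deconstruction'' direction of the spinal decomposition --- that the bushes excised at successive steps are, conditionally on the past, independent rescaled Brownian CRTs whose spinal projections are i.i.d.\ $\big(\frac12,\frac12\big)$-strings, that the attachment atom is size-biased, and that $a_k=m_k$ (which does hold here, using binarity of the CRT and diffuseness of $\mu$, even though Proposition \ref{prop:its_a_bead_crush} only gives $a_k\le m_k$ in general). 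You correctly flag this conditional-independence step as the crux; it is a genuine consequence of the recursive spinal decomposition in \cite{PitmWink09}, so your argument closes, but it carries more of the probabilistic burden than the paper's coupling, which is why the paper's version is shorter.
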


\begin{proof}
Following \cite{PitmWink09,RembWinkString}, we can construct a Brownian CRT via a bead crushing construction similar to that in Section \ref{sec:bead_crush}. In fact, we will construct a coupled Brownian CRT and Brownian IP tree.

Let $(q_n,\,n\ge 1)$ denote an i.i.d.\ sequence of $\big(\frac12,\frac12\big)$-strings of beads, as described in Example \ref{eg:aa_string}. For each $n$, denote by $L_n$ the maximum of the support of $q_n$; this is a.s.\ finite. As in Section \ref{sec:bead_crush}, we define $\T_0 := \{0\}$ and $p_0 := \delta_0$ and proceed recursively to construct a tree embedded in $\ell_1$.

Assume $(\T_n,0,\ell_1,p_n)$ is a rooted, weighted real tree embedded in the first $n$ coordinates in $\ell_1$, with $p_n$ a purely atomic measure. Let $X_n$ be a sample from $p_n$, so $p_n(X_n) =: M_n > 0$. I.e.\ $M_n\delta_{X_n}$ is a size-biased random atom of $p_n$. Set
\begin{equation}\label{eq:bead_crush_2}
\begin{split}
 \phi_n(z) &:= X_n + z \sqrt{M_n}\Be_{n+1} \quad \text{for }z\in [0,L_{n+1}],\\
 \T_{n+1} &:= \T_n\cup \phi_n[0,L_{n+1}] = \T_n\cup [[X_n,\phi_n(L_{n+1})]]_{\ell},\\
 p_{n+1} &:= p_n + M_n\left(-\delta_{X_n} + \phi_n\left(q_{n+1}\right)\right),
\end{split}
\end{equation}
where $\phi_n(q_{n+1})$ denotes the pushforward of the measure. As in Section \ref{sec:bead_crush}, $p_n = \pi_n(p_N)$ for $N>n$, so again, by the Daniell-Kolmogorov extension theorem, there exists a measure $p$ on $\ell_1$ with $\pi_n(p) = p_n$ for $n\ge1$. Setting $\T := \cl(\bigcup_{n \geq 1} \T_n)$, the tree $(\cT,\ell_1,0,p)$ is a Brownian CRT. % As in Section \ref{sec:bead_crush}, $p_n = \pi_n(p_N)$ for $N>n$, so again, by the Daniell-Kolmogorov extension theorem, there exists a measure $p$ on $[0,1]^{\BN}$ with $\pi_n(p) = p_n$ for $n\ge1$. And again, since $(\T,\ell_1)$ is complete, $p$ is supported on $\T$. In particular, $p$ is a diffuse measure supported on the leaves of $\T$. The resulting CRT $(\T,\ell_1,0,p)$ is called an \emph{$(\alpha,\theta)$-CRT}.

We now construct a Brownian IP tree coupled with this Brownian CRT. Let $q'_n$ to be the uniformization of $q_n$, in the sense of Definition \ref{def:uniformize}, for $n\ge 1$. There is a natural bijection from atoms of $q_n$ to those of $q'_n$ -- in fact, this bijection is a mass-structural isomorphism from $([0,L_n],d,0,q_n)$ to $([0,1],d,0,q'_n)$. We plug the measures $(q'_n,\,n\ge1)$ into the bead-crushing construction of Section \ref{sec:bead_crush} to recursively construct trees $(\cT'_n,\ell_1,0,p'_n)$. We see inductively that at each step, this resulting IP tree is mass-structurally equivalent to $(\T_n,\ell_1,0,p_n)$ from the other construction, and so to proceed to the next step we can crush an atom $X'_n\delta_{M_n}$ of $p'_n$ that corresponds to the atom $X_n\delta_{M_n}$ that was crushed in the other construction. In particular, this choice of $X'_n$ is a sample from $p'_n$. The resulting limiting tree $(\T',\ell_1,0,p')$ is a Brownian IP tree, as in Definition \ref{def:AT_IP_tree}.

Both $p$ and $p'$ are diffuse measures supported on the leaves of $\T$ and $\T'$, respectively. It follows from our inductive argument that there is a mass- and structure-preserving bijection from branch points of $\T$ to those of $\T'$. Thus, the two trees are mass-structurally equivalent.
\end{proof}

\begin{proposition}\label{prop:BCRT_unique}
 There exists a set of rooted, weighted $\BR$-trees $A$ with the properties that: (1) the Brownian CRT is a.s.\ isomorphic to a tree in $A$, and (2) no two trees in $A$ are mass-structurally equivalent.
\end{proposition}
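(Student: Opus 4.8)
The plan is to exhibit a family $A$ of rooted, weighted $\BR$-trees that is "rigid" in the sense that membership in $A$ together with the mass structure pins the tree down up to isomorphism, that contains almost every Brownian CRT, and then to thin $A$ so that it meets each isomorphism class at most once.

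First I would recall, from the proof of Proposition~\ref{prop:BIPT} and from Pitman--Winkel \cite{PitmWink09}, the self-similar description of the Brownian CRT: it arises from a bead-crushing-type construction fed by i.i.d.\ $(\tfrac12,\tfrac12)$-strings of beads, with the $n$-th new branch rescaled in metric by $\sqrt{M_n}$ and attached at a size-biased atom $X_n$ of $p_n$; equivalently, for any point $x$ at which one cuts, the fringe subtree $\fringe{x}{\T}$, rescaled in mass by $p(\fringe{x}{\T})^{-1}$ and in metric by $p(\fringe{x}{\T})^{-1/2}$, is again a Brownian CRT, conditionally independent of the complement given its mass, and the string of beads read off any spine is a $(\tfrac12,\tfrac12)$-string whose bead masses $(P_i)$ determine the spine's length through the $\tfrac12$-diversity $L=\lim_n n\sqrt{P_n}\,\Gamma(\tfrac12)$.

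The core step is to define $A$ to be the set of rooted, weighted $\BR$-trees $(\T,d,r,p)$ that share the combinatorial features of a Brownian CRT ($\Span(p)=\T$, with $p$ diffuse and carried by the leaves, and all branch points binary) and that, in addition, satisfy the metric relation forced by the $(\tfrac12,\tfrac12)$-string structure: along every spine, and recursively inside every fringe subtree, the gaps between consecutive branch points and the total spine length are exactly the functions of the (recursively read-off) bead masses prescribed by the $\tfrac12$-diversity formula. Using the self-similarity recalled above, one checks that the Brownian CRT lies in $A$ almost surely. This is the analogue, for Brownian-type trees, of the role the Spacing property plays for IP trees, and I expect it to be the main obstacle: one must make precise and verify that for a $(\tfrac12,\tfrac12)$-string the bead positions are a measurable functional of the bead masses, and that the resulting recursive prescription is consistent over the (dense) set of branch points, so that $A$ genuinely captures the Brownian metric rather than merely the mass structure.

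Granting that, I would show that on $A$ mass-structural equivalence implies isomorphism. Given a mass-structural isomorphism $\phi$ between two trees in $A$, the masses $p(\fringe{x}{\T})$ at all branch points are matched, hence --- by the relation defining $A$ --- so are all branch-point positions and spine lengths, so $\phi$ is already an isometry on the special points; extending $\phi$ off the special points by the overshooting/approximation device and checking that it is a measure-preserving isometry, exactly as in the proof of Proposition~\ref{prop:IP_tree_MSE}, produces an isomorphism. Finally, by the Axiom of Choice choose $A'\subseteq A$ containing exactly one tree from each isomorphism class that meets $A$. Distinct trees in $A'$ lie in distinct isomorphism classes, hence by the previous paragraph are not mass-structurally equivalent, which is property~(2); and since the Brownian CRT is a.s.\ isomorphic to a tree in $A$, it is a.s.\ isomorphic to the representative in $A'$ of its class, which is property~(1). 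Taking $A'$ as the required set finishes the proof.
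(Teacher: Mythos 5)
Your proposal is correct and follows essentially the same route as the paper: the crux in both arguments is that the Brownian metric is recoverable from mass-structural data via the $\frac12$-diversity of the bush masses along spines (the paper's \eqref{eq:BCRT_LT_limits}, cited from Pitman--Winkel), after which one selects one representative per isomorphism class. The paper packages this as a reconstruction map from the coupled Brownian IP tree of Proposition \ref{prop:BIPT} back to a Brownian CRT and takes representatives of its range, whereas you impose the diversity constraint directly as the membership condition for $A$ and prove rigidity within $A$; the mathematical content is the same.
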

Informally, this proposition states that the Brownian CRT is a.s.\ uniquely specified, up to isomorphism, by its mass-structural equivalence class.

\begin{proof}
 We prove this by constructing a Brownian CRT $(\T^*,d^*,r^*,p^*)$ as a deterministic function $\bar\psi$ of a Brownian IP tree $(\T',d',r',p')$ (conditioned on certain a.s.\ properties), in such a way that $\bar\psi$ sends isomorphic IP trees to isomorphic rooted, weighted $\BR$-trees. Once we have made this construction, then we view $(\T^*,d^*,r^*,p^*)$ as a function from a certain a.s.\ event to the space of rooted, weighted $\BR$-trees. The proposition is proved by taking $A$ to be a set of representatives of the isomorphism classes of trees with members appearing in the range of $(\T^*,d^*,r^*,p^*)$.
 
 Consider a general rooted, weighted $\BR$-tree $(\T,d,r,p)$. For $x,y\in\T$ with $x\in [[r,y]]_{\T}\setminus\{y\}$, we will write 
$$\bush{x\from y}{\T} := \fringe{x}{\T}\setminus\bigcup_{z\in [[x,y]]_{\cT}\setminus\{x\}}\fringe{z}{\T}.$$
Note that $\bush{x\from y}{\T} = \{x\}$ if and only if $x$ is not a branch point. In the language of Section \ref{sec:string_of_beads}, this is the \emph{bush} that branches off of the \emph{spine} $[[r,y]]_{\T}$ at $x$. We adopt the convention $\bush{y\from y}{\T} := \fringe{y}{\T}$. We define a purely atomic probability measure $p_y$ on the spine by $p_y\{x\} := p(\bush{x\from y}{\T})$.
 
 Let $(\T,d,r,p)$ and $(\T',d',r',p')$ be a Brownian CRT and Brownian IP tree, coupled as described in Proposition \ref{prop:BIPT}, so that there is a random mass-structural isomorphism $\phi$ that bijects the branch points of $\T$ with those of $\T'$. It follows from results in \cite{PitmWink09} that it is a.s.\ the case that for every $y\in\T$,
 % It follows from results in \cite{PitmWink09} that, for a Brownian CRP $(\T,d,r,p)$, it is a.s.\ the case that for every $y\in\T$,
 \begin{equation}\label{eq:BCRT_LT_limits}
  d(r,y) = \sqrt{\pi}\lim_{h\to 0} \sqrt{h}\#\big\{x\in [[r,y]]_{\cT}\colon p_y\{x\} > h\big\}.
 \end{equation}
 Let $E$ denote an a.s.\ event on which this formula holds at every $y\in\T$ and $\T$ is binary.
 
 We will show that, on $E$, the following limit converges, for every $y\in\T'$:
 \begin{equation}
  f(y) := \sqrt{\pi}\lim_{h\to 0} \sqrt{h}\#\big\{x\in [[r',y]]_{\cT'}\colon p'_y\{x\} > h\big\}.
 \end{equation}
 Clearly $f(r') = 0$. For a branch point $y\in\T'$, this equals the corresponding limit in \eqref{eq:BCRT_LT_limits} for $\phi^{-1}(y)$, so it does indeed converge. For $y_1\neq y_2$ with $y_1\in [[r',y_2]]_{\cT'}$, the limit for $y_2$ must be strictly greater than that for $y_1$. 
 Consequently, for a point $y\in\T'$ that is neither a branch point nor the root,
 $$f(y) = \sup\{f(z)\colon z\ \text{is a branch point and lies in }[[r',y]]_{\cT'}\}.$$
 Thus, this limit exists as well. We define a semi-metric on $\T'$ by
 $$d^*(y,z) = f(y) + f(z) - 2f((y\wedge z)_{\cT'}) \qquad \text{for }y,z\in\T'.$$
 
 Let $\T^*$ denote the set of equivalence classes of points in $\T'$ under the relation: $x\sim y$ if $d^*(x,y) = 0$. Then $(\T^*,d^*)$ is a $\BR$-tree. Let $r^*$ denote the $\sim$-equivalence class containing $r'$, and let $p^*$ denote the push-forward of $p'$ via the map from $\T'$ to $\T^*$. In the event $E$, no two branch points of $\T'$ belong to the same $\sim$-equivalence class. Thus, the map from a branch point of $\T'$ to its $\sim$-class is a mass-structural isomorphism from $(\T',d',r',p')$ to $(\T^*d^*,r^*,p^*)$; call it $\psi$. Moreover, by \eqref{eq:BCRT_LT_limits} and by definition of $d^*$, $\psi\circ\phi$ is a mass-structural isomorphism that is also an isometry from the branch points of $\T$ to those of $\T^*$. The branch points are well known to be dense in the Brownian CRT (and indeed, this is implied by \eqref{eq:BCRT_LT_limits}). Thus, we conclude that $(\T^*,d^*,r^*,p^*)$ is a Brownian CRT that is a.s.\ isomorphic to $(\T,d,r,p)$ via the unique continuous extension of $\psi\circ\phi$.
\end{proof}

%We have used IP trees to represent interaction between mass and structure in rooted, weighted $\BR$-trees because the distances in the IP tree are a function of said interacting mass and structure. However, the same holds a.s.\ for the Brownian CRT, and in fact for the $(\alpha,\theta)$-CRTs of \cite{Ford05,PitmWink09}. What sets IP trees apart is that they can accomplish this for \emph{any} mass-structure interaction: there are IP trees in every mass-structural equivalence class. The Brownian CRT, on the other hand, must be constrained in its structure in order to achieve this.

\subsection{Structural equivalence}\label{sec:struct_equiv}

In the introduction to this paper, we heuristically described mass-structural equivalence as equivalence of the interaction between mass and ``underlying tree structure.'' One notion of underlying structure was considered by Croyden and Hambly \cite{CroyHamb08}, who looked at a random homeomorphism for a deterministic fractal subeset of $\BR^2$ to the Brownian CRT. We present another such notion, framed analogously to Definition \ref{def:mass_struct} of mass-structural equivalence.

\begin{definition}\label{def:struct}
 Consider a rooted $\BR$-tree $(\T,d,r)$. A leaf $x\in \T$ is a \emph{discrete leaf} if there exists some branch point $y\in\T$ (its \emph{parent}) that separates $x$ from all other branch points. These discrete leaves, along with the branch points and the root $r$, comprise the set of \emph{structural points} of $(\T,d,r)$.
 
 Let $\mathscr{V}_i$ denote the set of structural points of a tree $(\T_i,d_i,r_i)$ for $i=1,2$. A \emph{structural isomophism} between these $\BR$-trees is a bijection $f\colon \mathscr{V}_1\to\mathscr{V}_2$ with the property that, for $x,y\in\mathscr{V}_1$, we have $x\in [[r_1,y]]_{\cT_1}$ if and only if $f(x)\in [[r_2,f(y)]]_{\cT_2}$.
 
 Two rooted $\BR$-trees are said to be \emph{structurally equivalent} if there exists a structural isomorphism from one to the other. It is straightforward to confirm that this is an equivalence relation.
\end{definition}

The following example illustrates the subtle distinction between the discrete leaves defined here and the isolated leaves of Definition \ref{def:special_pts}. We conjecture, and it should not be difficult to show, that replacing isolated leaves with discrete leaves of $\Span(p)$ in Definition \ref{def:special_pts} would yield an equivalent notion of mass-structural equivalence, but we will not prove this.

\begin{example}
 Let $(\T,\ell_1,0,p)$ be a Brownian CRT embedded in $\ell_1$ via the bead crushing construction discussed in the proof of Proposition \ref{prop:BIPT}. Let $x_1,x_2,\ldots$ be i.i.d.\ samples from $p$. For $n\ge 2$, let $\phi_n$ denote the linear transformation on $\ell_1$ that sends each coordinate vector $\Be_k$ to $\Be_{nk}$ for $k\ge 1$. Then, define
 \begin{equation*}
 \begin{split}
  \T_2 &:= \phi_2(\T) \cup \bigcup_{k\ge 1}\left(\phi_2(x_k) + [0,2^{-k}]\Be_{2k-1}\right),\\
  \T_3 &:= \phi_3(\T) \cup \bigcup_{k\ge 1}\left(\phi_3(x_k) + \left([0,2^{-k-1}]\Be_{3k-2} \cup [0,2^{-k-1}]\Be_{3k-1}\right)\right), \quad \text{and}\\
  \T_4 &:= \phi_4(\T) \cup \bigcup_{k\ge 1}\left(\phi_4(x_k) + \left([0,2^{-k-1}]\Be_{4k-3} \cup [0,2^{-k-2}]\Be_{4k-2}\right.\right.\\[-5pt]
  	&\hspace{3.2in} \left.\left.\cup\; [0,2^{-k-2}]\Be_{4k-1}\right)\right).
 \end{split}
 \end{equation*}
 In other words, $\T_2$ is formed by isometrically re-embedding $\T$ into the even coordinates in $\ell_1$, and then attaching new, macroscopic branches at each of the leaves $x_k$, $k\ge 1$; and $\T_3$ and $\T_4$ are correspondingly formed by attaching two or three new branches at each sampled leaf. For $n=2,3,4$, let $p_n$ denote the length measure on $\T_n\setminus \phi_n(\T)$, and consider $(\T_n,\ell_1,0,p_n)$ as a rooted, weighted $\BR$-tree. Then $\T_2 = \Span(p_2)$ and the leaves $\phi(x_k) + 2^{-k}\Be_{2k-1}$ are isolated leaves in the sense of Definition \ref{def:special_pts}, and correspondingly for $\T_3$ and $\T_4$. However, the newly added leaves in $\T_2$, in particular, are not ``discrete'' in the sense of Definition \ref{def:struct}, since leaves in a Brownian CRT do not have parent branch points but rather arise as limit points of branch points.
 
 If we did not include isolated leaves, like those in $\T_2$, $\T_3$, and $\T_4$, as special points, but otherwise left Definitions \ref{def:special_pts} and \ref{def:mass_struct} of special points and mass-structural equivalence as is, then $(\T_3,\ell_1,0,p_3)$ and $(\T_4,\ell_1,0,p_4)$ would be considered mass-structurally equivalent, and Theorems \ref{thm:MS_rep} and \ref{thm:MS_Theta} would fail.
 
 Now, define
 \begin{equation*}
 \begin{split}
  \T'_2 &:= \phi_2(\T) \cup \big(\phi((x_1\wedge x_2)_{\T}) + [0,1]\Be_{1}\big) \quad \text{and}\\
  \T'_3 &:= \phi_2(\T) \cup \big(\phi((x_1\wedge x_2)_{\T}) + ([0,1]\Be_{1} \cup [0,1]\Be_{3})\big).
 \end{split}
 \end{equation*}
 Consider $(\T'_2,\ell_1,0,\phi_2(p))$ and $(\T'_3,\ell_1,0,\phi_2(p))$. The newly added branches do not belong to $\Span(\phi_2(p))$, so their endpoints are not isolated leaves, in the sense of Definition \ref{def:special_pts}. But these endpoints are discrete leaves, in the sense of Definition \ref{def:struct}. Consequently, the two trees are mass-structurally equivalent to each other and to $(\T,\ell_1,0,p)$, but not structurally equivalent.
\end{example}

Structural equivalence may be an interesting notion of equivalence, but the ``underlying structure'' -- i.e.\ structural equivalence class -- as an object sacrifices much of what makes CRTs interesting. Reframing the result of Croyden and Hambly \cite{CroyHamb08} in the language of Definition \ref{def:struct}, we can construct a Brownian CRT such that its underlying structure is deterministic. Without either distances or masses to indicate relative ``sizes'' of components in a decomposition of the Brownian CRT, the randomness and much of the interesting fractal structure are lost.

\subsection{Directions for further study}\label{sec:problems}

(1) Introduce and study interesting families exchangeable hierarchies on $\BN$, or equivalently in light of Theorem \ref{thm:hierarchies_IP}, random IP trees, perhaps for use in applications such as nested topic models in machine learning \cite{NestedCRP,NHDP}. The three behaviors mentioned around the statement of Theorem \ref{thm:measure_decomp} -- macroscopic branching, broom-like explosion, and comb-like erosion -- cannot all be distinguished in the discrete regime, but the insight that all three can appear in scaling limits may aid in defining models for finite exchangeable random hierarchies.\medskip

(2) In connection with (1), do random IP trees arise as scaling limits of suitably metrized random discrete trees? Can we learn about the IP trees from this perspective? This may tie back to the perspective in Theorem \ref{thm:MS_Theta}, of IP trees as corresponding to e.i.g.\ hierarchies on $\BN$, and the latter being represented as projectively consistent sequences, as in Definition \ref{def:hierarchy}.\medskip

(3) Study the images of other rooted, weighted CRTs, for example those arising from bead-crushing constructions as in \cite{PitmWink09,RembWinkString} (including stable CRTs), under the map from a rooted, weighted $\BR$-tree to an isomorphism class of mass-structurally equivalent IP trees.\medskip

(4) Characterize mass-structural equivalence in terms of deformations or correspondences, in the sense described in \cite{MR2221786}. How can a tree be stretched, pruned, contracted, or otherwise modified without changing its mass-structure?\medskip

(5) Study notions of structural equivalence of CRTs that do not depend on either mass or quantified distance, such as those in Definition \ref{def:struct} or \cite{CroyHamb08}. Look at a space of $\BR$-tree structures. Consider random elements, metrize the space, etc..

\section*{Acknowledgments}

The author thanks Soumik Pal and Matthias Winkel for their support in this research and their helpful comments on a draft of this paper.

\bibliographystyle{plain}
\bibliography{IPtrees}

\begin{thebibliography}{10}

\bibitem{MR1085326}
David Aldous.
\newblock The continuum random tree. {I}.
\newblock {\em Ann. Probab.}, 19(1):1--28, 1991.

\bibitem{MR1207226}
David Aldous.
\newblock The continuum random tree. {III}.
\newblock {\em Ann. Probab.}, 21(1):248--289, 1993.

\bibitem{MR1641670}
David Aldous and Jim Pitman.
\newblock Tree-valued {M}arkov chains derived from {G}alton-{W}atson processes.
\newblock {\em Ann. Inst. H. Poincar\'e Probab. Statist.}, 34(5):637--686,
  1998.

\bibitem{AldoPitm00}
David Aldous and Jim Pitman.
\newblock Inhomogeneous continuum random trees and the entrance boundary of the
  additive coalescent.
\newblock {\em Probab. Theory Related Fields}, 118(4):455--482, 2000.

\bibitem{BertoinFragCoag}
Jean Bertoin.
\newblock {\em Random fragmentation and coagulation processes}, volume 102 of
  {\em Cambridge Studies in Advanced Mathematics}.
\newblock Cambridge University Press, Cambridge, 2006.

\bibitem{NestedCRP}
David~M. Blei, Thomas~L. Griffiths, and Michael~I. Jordan.
\newblock The nested {C}hinese restaurant process and {B}ayesian nonparametric
  inference of topic hierarchies.
\newblock {\em J. ACM}, 57(2):Art. 7, 30, 2010.

\bibitem{CroyHamb08}
David Croydon and Ben Hambly.
\newblock Self-similarity and spectral asymptotics for the continuum random
  tree.
\newblock {\em Stochastic Process. Appl.}, 118(5):730--754, 2008.

\bibitem{CuriHaas16}
Nicolas Curien and B{\'e}n{\'e}dicte Haas.
\newblock Random trees constructed by aggregation.
\newblock Preprint at arXiv:1411.4255v2 [math.PR], 2016.

\bibitem{MR1104078}
Peter Donnelly and Paul Joyce.
\newblock Consistent ordered sampling distributions: characterization and
  convergence.
\newblock {\em Adv. in Appl. Probab.}, 23(2):229--258, 1991.

\bibitem{DuquLeGall05}
Thomas Duquesne and Jean-Fran\c{c}ois Le~Gall.
\newblock Probabilistic and fractal aspects of {L}\'evy trees.
\newblock {\em Probab. Theory Related Fields}, 131(4):553--603, 2005.

\bibitem{DuquLeGall02}
Thomas Duquesne and Jean-Fran{\c{c}}ois Le~Gall.
\newblock Random trees, {L}\'evy processes and spatial branching processes.
\newblock {\em Ast\'erisque}, (281):vi+147, 2002.

\bibitem{EvansStFleur}
Steven~N. Evans.
\newblock {\em Probability and real trees}, volume 1920 of {\em Lecture Notes
  in Mathematics}.
\newblock Springer, Berlin, 2008.
\newblock Lectures from the 35th Summer School on Probability Theory held in
  Saint-Flour, July 6--23, 2005.

\bibitem{MR2221786}
Steven~N. Evans, Jim Pitman, and Anita Winter.
\newblock Rayleigh processes, real trees, and root growth with re-grafting.
\newblock {\em Probab. Theory Related Fields}, 134(1):81--126, 2006.

\bibitem{Ford05}
Daniel~J. Ford.
\newblock {\em Probabilities on cladograms: introduction to the alpha model}.
\newblock Ph. D. thesis, Stanford University, 241 p., article version also
  available at arXiv:math.PR/0511246, 2006.

\bibitem{FormHaulPitm17}
N.~Forman, C.~Haulk, and J.~Pitman.
\newblock A representation of exchangeable hierarchies by sampling from random
  real trees.
\newblock To appear in {\it Probab.\ Theory Related Fields}. Preprint at
  arXiv:1101.5619 [math.PR].

\bibitem{MR1996162}
Bernard~R. Gelbaum and John M.~H. Olmsted.
\newblock {\em Counterexamples in analysis}.
\newblock Dover Publications, Inc., Mineola, NY, 2003.
\newblock Corrected reprint of the second (1965) edition.

\bibitem{MR1457625}
Alexander~V. Gnedin.
\newblock The representation of composition structures.
\newblock {\em Ann. Probab.}, 25(3):1437--1450, 1997.

\bibitem{GoldHaas15}
Christina Goldschmidt and B\'en\'edicte Haas.
\newblock A line-breaking construction of the stable trees.
\newblock {\em Electron. J. Probab.}, 20:no. 16, 24, 2015.

\bibitem{GrevPfafWint09}
Andreas Greven, Peter Pfaffelhuber, and Anita Winter.
\newblock Convergence in distribution of random metric measure spaces
  ({$\Lambda$}-coalescent measure trees).
\newblock {\em Probab. Theory Related Fields}, 145(1-2):285--322, 2009.

\bibitem{GromovBook}
Misha Gromov.
\newblock {\em Metric structures for {R}iemannian and non-{R}iemannian spaces}.
\newblock Modern Birkh\"auser Classics. Birkh\"auser Boston, Inc., Boston, MA,
  english edition, 2007.
\newblock Based on the 1981 French original, With appendices by M. Katz, P.
  Pansu and S. Semmes, Translated from the French by Sean Michael Bates.

\bibitem{MR2546748}
B{\'e}n{\'e}dicte Haas, Jim Pitman, and Matthias Winkel.
\newblock Spinal partitions and invariance under re-rooting of continuum random
  trees.
\newblock {\em Ann. Probab.}, 37(4):1381--1411, 2009.

\bibitem{KallenbergSymm}
Olav Kallenberg.
\newblock {\em Probabilistic symmetries and invariance principles}.
\newblock Probability and its Applications (New York). Springer, New York,
  2005.

\bibitem{Kingman75}
J.~F.~C. Kingman.
\newblock Random discrete distribution.
\newblock {\em J. Roy. Statist. Soc. Ser. B}, 37:1--22, 1975.

\bibitem{Kingman78}
J.~F.~C. Kingman.
\newblock The representation of partition structures.
\newblock {\em J. London Math. Soc. (2)}, 18(2):374--380, 1978.

\bibitem{MR2203728}
Jean-Fran{\c{c}}ois Le~Gall.
\newblock Random trees and applications.
\newblock {\em Probab. Surv.}, 2:245--311, 2005.

\bibitem{NHDP}
John Paisley, Chong Wang, David~M Blei, and Michael~I Jordan.
\newblock Nested hierarchical dirichlet processes.
\newblock {\em IEEE Trans. Pattern Anal. Mach. Intell.}, 37(2):256--270, 2015.

\bibitem{PermPitmYor92}
Mihael Perman, Jim Pitman, and Marc Yor.
\newblock Size-biased sampling of {P}oisson point processes and excursions.
\newblock {\em Probability Theory and Related Fields}, 92:21--39, 1992.

\bibitem{MR2245368}
J.~Pitman.
\newblock {\em Combinatorial stochastic processes}, volume 1875 of {\em Lecture
  Notes in Mathematics}.
\newblock Springer-Verlag, Berlin, 2006.
\newblock Lectures from the 32nd Summer School on Probability Theory held in
  Saint-Flour, July 7--24, 2002, With a foreword by Jean Picard.

\bibitem{Pitman03}
Jim Pitman.
\newblock Poisson-{K}ingman partitions.
\newblock In {\em Statistics and science: a {F}estschrift for {T}erry {S}peed},
  volume~40 of {\em IMS Lecture Notes Monogr. Ser.}, pages 1--34. Inst. Math.
  Statist., Beachwood, OH, 2003.

\bibitem{PitmWink09}
Jim Pitman and Matthias Winkel.
\newblock Regenerative tree growth: binary self-similar continuum random trees
  and {P}oisson-{D}irichlet compositions.
\newblock {\em Ann. Probab.}, 37(5):1999--2041, 2009.

\bibitem{PitmYor97}
Jim Pitman and Marc Yor.
\newblock The two-parameter {P}oisson-{D}irichlet distribution derived from a
  stable subordinator.
\newblock {\em Ann. Probab.}, 25(2):855--900, 1997.

\bibitem{RembWinkString}
Franz Rembart and Matthias Winkel.
\newblock Recursive construction of continuum random trees.
\newblock To appear in {\it Ann.\ Probab.}. Preprint at arXiv:1607.05323
  [math.PR], 2016.

\bibitem{SempleSteel}
Charles Semple and Mike Steel.
\newblock {\em Phylogenetics}, volume~24 of {\em Oxford Lecture Series in
  Mathematics and its Applications}.
\newblock Oxford University Press, Oxford, 2003.

\end{thebibliography}

\end{document}